\documentclass[VANCOUVER,STIX1COL]{WileyNJD-v2}

\articletype{Article Type}%

\received{26 April 2016}
\revised{6 June 2016}
\accepted{6 June 2016}

\usepackage{amsmath,amsfonts}
\usepackage{mathtools}
\usepackage{subfig}
\usepackage{tikz}
\usepackage{pgfplots}
\usepackage{xcolor}
\usepgfplotslibrary{groupplots,dateplot}
\usetikzlibrary{patterns,shapes.arrows}
\usepackage{siunitx}
\usepackage{nicefrac}

\raggedbottom

\begin{document}

\title{Why diffusion-based preconditioning of Richards equation works: spectral analysis and computational experiments at very large scale. 
\protect\thanks{This work has been partially supported by EU under the Horizon 2020 Project ``Energy oriented Centre of Excellence: toward exascale for energy'' (EoCoE--II), Project ID: 824158.}}

\author[1,2]{Daniele Bertaccini}

\author[2]{Pasqua D'Ambra}

\author[3,2]{Fabio Durastante*}

\author[4,2]{Salvatore Filippone}

\authormark{DANIELE BERTACCINI \textsc{et al}}

\address[1]{\orgdiv{Dipartimento di Matematica}, \orgname{Università di Roma ``Tor Vergata''}, \orgaddress{\state{Rome}, \country{Italy}}}

\address[2]{\orgdiv{Istituto per le Applicazioni del Calcolo ``Mauro Picone''}, \orgname{Consiglio Nazionale delle Ricerche}, \orgaddress{\state{Naples}, \country{Italy}}}

\address[3]{\orgdiv{Dipartimento di Matematica}, \orgname{Università di Pisa}, \orgaddress{\state{Pisa}, \country{Italy}}}

\address[4]{\orgdiv{Department of Civil and Computer Engineering}, \orgname{Università di Roma ``Tor Vergata''}, \orgaddress{\state{Rome}, \country{Italy}}}

\corres{*Fabio Durastante, Largo Bruno Pontecorvo 5, Pisa (PI) 56127, Italy. \email{fabio.durastante@unipi.it}}

\abstract[Summary]{
	We consider here a cell-centered finite difference approximation of the
	Richards  equation in three dimensions, averaging for interface
	values the hydraulic conductivity $K=K(p)$, a highly nonlinear
	function, by arithmetic, upstream and harmonic means. The
	nonlinearities in the equation can lead to changes in soil
	conductivity over several orders of magnitude and  
	discretizations with respect to space variables often produce stiff
	systems of differential equations. A fully implicit time
	discretization is provided by \emph{backward Euler} one-step formula;
	the resulting nonlinear algebraic system is solved by an
	inexact Newton Armijo-Goldstein algorithm, requiring the solution of
	a sequence of linear systems involving Jacobian matrices. 
	We prove some new results concerning the distribution 
	of the Jacobians eigenvalues and the explicit expression 
	of their entries.
	Moreover, we explore some connections between the saturation of the
	soil and the ill conditioning of the Jacobians. 
	The information on eigenvalues justifies the effectiveness of some preconditioner approaches which are widely used in the solution of Richards equation.
	We also propose a new software framework to experiment with scalable and robust preconditioners suitable for efficient parallel simulations at very large scales. Performance results on a literature test case show that our framework is very promising in the advance towards realistic simulations at extreme scale.}

\keywords{Richards Equation, Spectral Analysis, Algebraic Multigrid, High Performance Computing}

\jnlcitation{\cname{%
\author{D'Ambra P.}, 
\author{D. Bertaccini}, 
\author{F. Durastante}, and 
\author{S. Filippone}} (\cyear{2022}), 
\ctitle{Why diffusion-based preconditioning of Richards equation works: spectral analysis and computational experiments at very large scale.}, \cjournal{ArXiv}, \cvol{\url{https://arxiv.org/abs/2112.05051}}.}

\maketitle

\section{Introduction}
\label{sec:introduction}

Groundwater flow in the unsaturated zone is a highly nonlinear
phenomenon that can be modeled by the Richards equation, and there is
a significant amount of research concerning
different formulations and algorithms for calculating the flow of
water through unsaturated porous media~\cite{Richards-review,Genuchten80,CELIA90,Miller-Kelley98,JonesWoodward01}.

The Richards equation is a time-dependent Partial Differential Equation 
(PDE) whose discretization leads to large nonlinear systems of
algebraic equations, which  often include coefficients showing 
large variations over different orders of magnitude. 
Typically, the variation in the 
coefficients is due to the use of a geostatistical model for
hydraulic conductivity, allowing changes of many orders of magnitude 
from one cell to the next (heterogeneity) as well as
correlations of values in each direction (statistical anisotropy).
High heterogeneity and anisotropy in the problem and
the presence of strong nonlinearities in the equation's coefficients
make the problem difficult to be solved numerically.

The main contribution of the present work is twofold. 
First, we investigate the spectral properties of the sequence of Jacobian matrices
arising in the solution of the non linear systems of algebraic equations by a quasi-Newton method; these
properties allow us to formulate a new theoretical justification for some preconditioning choices in 
solving Richards equation which are indeed current usual practices in the literature; to the best of our
knowledge, this is the first theoretical formulation to support such common practices. 
Then, on the basis of the theoretical indications, we discuss a scalable and efficient parallel solution 
of a modified inexact quasi-Newton method with Krylov solvers and Armijo-Goldstein's line search, 
i.e.  Newton-like algorithms where the Newton correction linear equations are solved by a Krylov subspace
method. To this aim, we interface the KINSOL package available from the Sundials
project~\cite{hindmarsh2005sundials} with the most recent version of libraries for solving sparse linear
systems by parallel Krylov solvers coupled with purely algebraic preconditioners~\cite{dambra2020b}, 
currently being extended in some EU-funded projects. 

The work is organized as follows. First, in
Section~\ref{sec:discretizing} we discuss the discretization of the 
Richards equation by means of cell-centered finite differences. Then, in
Section~\ref{sec:inexact-newton}, we summarize the inexact quasi-Newton
method as implemented in the KINSOL framework. 
In Section~\ref{sec:spectral-analysis} we propose an analysis of some
spectral properties of the sequence of Jacobian matrices produced by the
underlying Newton method.
In Section~\ref{sec:softwareframework}, we describe some of the main features of the PSCToolkit parallel
software framework,  which we use to implement the computational procedure described in this paper.

The information produced by the spectral analysis of the Jacobian matrices
is used in Section~\ref{sec:preconditioners} to devise a preconditioning
strategy for the Krylov subspace method and to present the preconditioners
used in this study. Some numerical examples highlighting the computational
performance of the proposed methods are then presented  in 
Section~\ref{sec:numerical_experiment}. Finally, in
Section~\ref{sec:conclusions} we draw our conclusion and briefly
discuss future extensions.

\section{Formulating the discrete problem}
\label{sec:discretizing}

Flow in the vadose zone, which is also known as the
\emph{unsaturated} zone, has rather delicate aspects such as the parameters
that control the flow, which depend on the saturation of the media,
leading to the nonlinear problem described by the Richards equation. 
The flow  can be expressed as a combination of Darcy's law and the
principle of mass  conservation by
\[\frac{\partial \left(\rho \,\phi s(p) \right)}{\partial t} + \nabla \cdot q = 0,\]
in which $s(p)$ is the saturation at pressure head $p$ of a fluid
with density $\rho$ in terrain with porosity $\phi$, and $q$ is
the volumetric water flux. By using Darcy's  law as
\[q = - K(p) \left( \nabla p + c \hat{z} \right),\]
for $K(p)$ the hydraulic conductivity, and $c$ the cosine of the
angle between the downward $z$ axis $\hat{z}$ and the direction of
the gravity force, we obtain that the overall equation has a
diffusion as well as an advection term, the latter being  related to
gravity. For the sake of simplicity, we  consider cases in which $c
= 1$, i.e. the advection acts only in the $z$
direction. There are two different forms of the Richards equation that
differ in how they deal with the nonlinearity in the time derivative. The most popular form, 
which  is the one considered here for a fluid that will always be water, permits to 
express the general equation as
\begin{equation}\label{eq:richards}
\rho \,\phi\frac{\partial s(p) }{\partial t} - \nabla\cdot
K(p)\nabla p - \frac{\partial K(p)}{\partial z}  = f,
\end{equation}
where $p(t)$ is the pressure head at time $t$, $s(p)$ is water saturation
at pressure head $p$, $\rho$ is water density, $\phi$ is the porosity of
the medium, $K(p)$ the hydraulic conductivity, $f$ represents any water
source terms and $z$ is elevation. The equation is then completed by
adding boundary and initial conditions.
This formulation of the Richards equation is called the \emph{mixed
	form}~\cite{Richards-review,CELIA90} because the equation is parameterized in $p$ (pressure head)
but the time derivative is in terms of $s$ (water saturation). Another formulation of the 
Richards equation is known as the
head-based form, and is popular because the time derivative is written
explicitly  in terms of  the pressure head $p$~\cite{Genuchten80,CELIA90}.

It is important to note that in  unsaturated flow  both water
content, $\phi s(p)$, and hydraulic conductivity, $K(p)$, are functions of
the pressure head  $p$. There are several empirical relations used to relate
these parameters, including, e.g., the
Brooks-Corey~\cite{brooks1964hydraulic} model and the Van Genuchten
model~\cite{Genuchten80}. The Van Genuchten model is slightly more
popular because it containts no discontinuities in the functions, unlike 
the Brooks-Corey model, and therefore avoids the risk of losing the
uniqueness of the solution of the semidiscretized equation in
space by the well-known Peano Theorem. A version of the Van
Genuchten-Richards equation in mixed form model by Celia et al.~\cite{CELIA90} reported the following choices:
\begin{equation}\label{eq:s(p)}
s(p)=\frac{\alpha (s_s-s_r)}{\alpha+|p|^{\beta}}+s_r,
\end{equation}
and
\begin{equation}\label{eq:K(p)}
K(p)=K_s \frac{a}{a+|p|^{\gamma}}
\end{equation}
where the $\alpha,\beta,\gamma$, and $a$ are
fitting parameters that are often assumed to be constant in the
media; $s_r$ and $s_s$ are the residual and saturated moisture
contents, and $K_s$ is the saturated hydraulic conductivity.

Small changes in the pressure head can change the hydraulic conductivity
by several orders of magnitude, and $K(p)$ is a highly nonlinear
function. The water content curve is also highly nonlinear as
saturation can change drastically over a small range of pressure
head values. It should be noted that these functions are only valid
when the pressure head is negative; that is, the media is
semi-saturated; when the media is fully saturated, $K = K_s$, 
$s(p)$ is equal to the porosity, and the Richards equation
reduces to the Darcy linear equation; see the example in
Figure~\ref{fig:VanGenuchtenModel}.

\begin{figure}[htbp]
	\centering
	\input{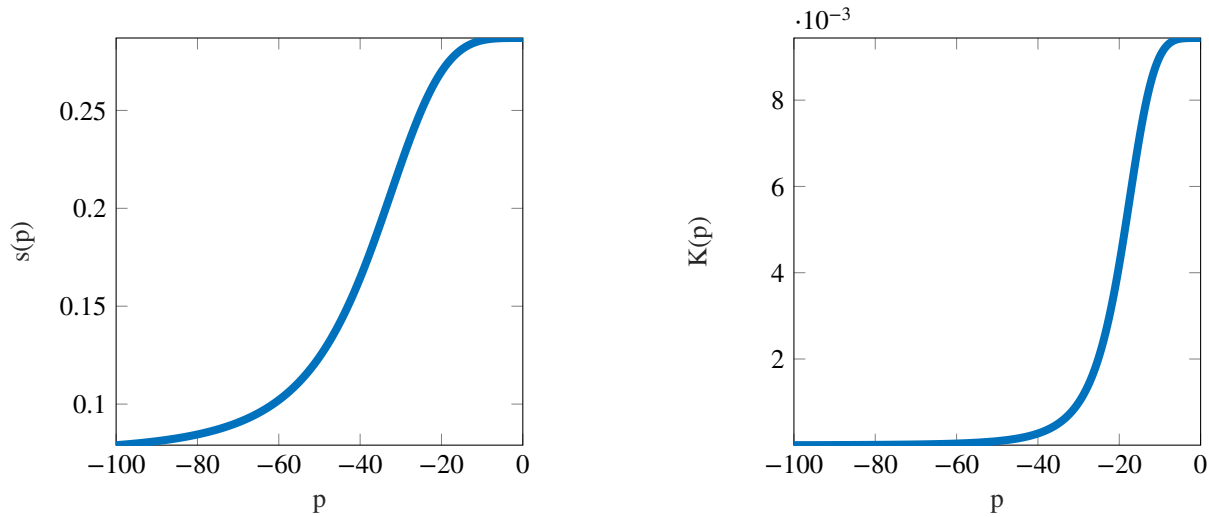}
	\caption{Hydraulic conductivity $K(p)$ and volumetric water content $S(p)$ for parameters $\alpha = \texttt{1.611e+6}$, $\beta  = 3.96$ , $\gamma = 4.74$,  $a     = \texttt{1.175e+6}$, $S_s    = 0.287$, $S_r    = 0.075$,  $K_s    = 0.00944$ \si{cm/s}.}
	\label{fig:VanGenuchtenModel}
\end{figure}

Typical uses of the Richards equation are to simulate infiltration
experiments in the laboratory and in the field. These
begin with initially dry soil,  then water is added to
the top of the core sample (or ground surface).
These experiments  require the  simulation of  fast changes in pressure head and saturation over the most
nonlinear part of the Van Genuchten curves.

\subsection{Cell-centered finite difference discretization}\label{sec:finite_difference_discretization}

At the beginning of an  infiltration experiment,
the pressure head $p$ can be close to discontinuous. These large
changes are also reflected in the nonlinear terms $K$ and
$s(p)$. Taking into account the effect of the initial conditions, the time step 
can be severely limited if an inappropriate time discretization is
chosen. Hydrogeologists are often interested in following the process
until a steady state is achieved, which may take a long time; therefore,
large time steps should be used to avoid excessively expensive 
simulations. The presence of stiffness and the desire to use long time steps,
lead to implicit time integration schemes. 
In this paper we consider an implicit backward Euler numerical scheme;  higher-order 
implicit methods are not considered because 
the uncertainty associated with the fitting parameters in the Van
Genuchten models and the possible low smoothness of $s$ (i.e., the
max order of differentiability) can severely reduce the effective order of
a (theoretically) high order numerical method.

Successful discretizations in space for Richards equation are based
on finite differences~\cite{CELIA90,JonesWoodward01}; using finite
elements may require mass lumping in order to recover possible large mass
balance errors and undershoot errors ahead of the infiltration
front~\cite{CELIA90}.

We consider here a discretization of the Richards equation on a cell-centered
finite difference tensor mesh on a parallelepiped  discretized
with $\mathbf{N} = (N_x,N_y,N_z)$ nodes, in such a way that the most
external nodes are on the physical boundary of the domain. We then denote 
the cell centers as 
$\{x_{i,j,k} = (i h_x, j h_y, k h_z)\}_{i,j,k=0}^{N-1}$, 
for
$\mathbf{h} = (h_x,h_y,h_z) = (L_x,L_y,L)/(\mathbf{N}-1)$, 
and with the interfaces located at midpoints between adjacent nodes.
The time direction is then discretized by considering $N_t$ uniform time steps, i.e., the grid $\{t_l = l \Delta\,t \}_{l=0}^{N_t-1}$ for $\Delta\,t=1/(N_t-1)$; the approximation on the cell $(i,j,k)$ at time step $l$ of the pressure head $p$ is denoted as $p_{i,j,k}^{(l)}$.

With this choice, the discretization of the mixed form of the Richards
equation in~\eqref{eq:richards} on the internal nodes of the grid for $l\geq 1$ is written, for $i,j,k=1,\ldots,\mathbf{N}-2$, as
\begin{equation}\label{eq:dynamic_of_the_problem}
\begin{split}
\boldsymbol{\Phi}(p_{i,j,k}^{(l)}) = & \frac{\rho  \phi }{\Delta t} \left(s\left(p_{i,j,k}^{(l)}\right)-s\left(p_{i,j,k}^{(l-1)}\right)\right)  + q_{i+1/2,j,k}^{(l)} - q_{i-1/2,j,k}^{(l)}  + q_{i,j+1/2,k}^{(l)} - q_{i,j-1/2,k}^{(l)} + q_{i,j,k+1/2}^{(l)} - q_{i,j,k-1/2}^{(l)} + f_{i,j,k} \equiv 0,
\end{split}
\end{equation}
with
\begin{eqnarray*}
	q_{i+1/2,j,k}^{(l)} = & -\prescript{\text{\scriptsize AV}}{}{K_{i+1,i}^{(l)}} \left( \frac{p^{(l)}_{i+1,j,k}-p^{(l)}_{i,j,k} }{h_x^2} \right), \qquad
	q_{i-1/2,j,k}^{(l)} = & -\prescript{\text{\scriptsize AV}}{}{K_{i-1,i}^{(l)}} \left( \frac{p^{(l)}_{i,j,k}-p^{(l)}_{i-1,j,k} }{h_x^2} \right), \\
	q_{i,j+1/2,k}^{(l)} = & -\prescript{\text{\scriptsize AV}}{}{K_{j+1,j}^{(l)}} \left( \frac{p^{(l)}_{i,j+1,k}-p^{(l)}_{i,j,k} }{h_y^2} \right), \qquad
	q_{i,j-1/2,k}^{(l)} = & -\prescript{\text{\scriptsize AV}}{}{K_{j-1,j}^{(l)}} \left( \frac{p^{(l)}_{i,j,k}-p^{(l)}_{i,j-1,k} }{h_y^2} \right),\\
	q_{i,j,k+1/2}^{(l)} = & -\prescript{\text{\scriptsize AV}}{}{K_{k+1,k}^{(l)}} \left( \frac{p^{(l)}_{i,j,k+1}-p^{(l)}_{i,j,k} }{h_z^2} \right) - \frac{K(p_{i,j,k+1})}{2h_z}, \qquad q_{i,j,k-1/2}^{(l)} = & -\prescript{\text{\scriptsize AV}}{}{K_{k-1,k}^{(l)}} \left( \frac{p^{(l)}_{i,j,k}-p^{(l)}_{i,j,k-1} }{h_z^2} \right) - \frac{K(p_{i,j,k-1})}{2h_z},
\end{eqnarray*}
in which the generic term $\prescript{\text{\scriptsize AV}}{}{K_{L,U}}^{(l)}$ represents an average at the interfaces of the function $K(p)$ in~\eqref{eq:K(p)}. The selection of the form of the average term best suited to more realistic simulations does depend on the problem and is still an open issue~\cite{doi:10.1029/2008WR007654,doi:10.1061/(ASCE)1084-0699(2006)11:6(526)}. In particular, the choice is dependent on the type of fluid infiltration one needs to deal with. If we denote by $K_U$ and $K_L$ the two values of the function $K$ on the opposite sides of the interface, e.g., $K_U = K(p^{(l)}_{i,j+1,k})$ and $K_L = K(p^{(l)}_{i,j,k})$, the most frequently used formulations are the arithmetic mean~\cite{CELIA90}, i.e., $\prescript{\text{\scriptsize ARIT}}{}{K_{L,U}}^{(l)} = (K_U+K_L)/2$, the geometric mean~\cite{doi:10.1029/WR015i001p00181}, i.e., $\prescript{GEOM}{}{K^{(l)}} = \sqrt{K_U K_L}$, the upstream-weighted mean~\cite{doi:10.1029/92WR02875}, i.e.,
\begin{equation}\label{eq:upstream_mean}
\prescript{\text{UP}}{}{K^{(l)}} = \begin{cases}
K_U, & p_U - p_L \geq 0,\\
K_L, & p_U - p_L < 0,\\
\end{cases}
\end{equation}
and the integral mean~\cite{Srivastava1995}, i.e.,
\begin{equation*}
\prescript{\text{INT}}{}{K^{(l)}} = \begin{cases}
\frac{1}{p_L - p_U} \int_{p_L}^{p_U} K(\psi) d\psi, & p_L \neq p_U, \\
p_U, & \text{ otherwise}.
\end{cases}
\end{equation*}
It is also possible to employ  a combination of the above, using two different means, one for the horizontal
and another for the vertical direction, or to use an algorithm that computes the internodal conductivity by
using  different approaches depending on the terrain and the value of the pressure
head~\cite{doi:10.1029/2008WR007654}.

\section{Applying the inexact Newton method}
\label{sec:inexact-newton}

We apply at each time step a quasi-Newton method for the solution of the
nonlinear system of equations~\eqref{eq:dynamic_of_the_problem}
as implemented in the KINSOL parallel library~\cite{hindmarsh2005sundials}. 
Let $\mathbf{p}_r$ be the current iterate of pressure head, for each node of the computational mesh and for each time step. A Newton method computes an increment $\mathbf{d}_r$ as the solution of the following equation: 
\begin{equation}\label{eq:linearsystemtosolve}
J(\mathbf{p}_r) \mathbf{d}_r = -\boldsymbol{\Phi}(\mathbf{p}_r),
\end{equation}
where $J(\mathbf{p}_r)$ is the Jacobian matrix of $\boldsymbol{\Phi}$. 
Specifically, we consider an
inexact Newton solver in which we update the Jacobian matrix as
infrequently as possible. This means that a first Jacobian is
computed in the initialization phase, i.e. in the first step $r=0$
of the algorithm; then, a new one is built if and only if at least one of
the following conditions are met
\begin{equation}\label{eq:jacobian_update_condition}
\left\lbrace\begin{array}{l}
r \equiv 0 \,(\text{ mod }10), \\
\| \lambda d_{r-1} \|_{D_u,\infty} > 1.5, \\
\|\lambda d_r \|_{D_u,\infty} < \varepsilon_{\text{machine}}^{2/3} \\
\text{ The linear solver failed to converge with the}\\
\quad \text{ previous Jacobian (backtracking), or}\\
\text{ The line search failed with outdated Jacobian information,}\\
\end{array}
\right.
\end{equation}
where we are using the scaled norm $\| \cdot \|_{D_u,\infty} = \|
D_u \cdot \|_\infty$ for a diagonal matrix $D_u$ such that the
entries of the scaled vector are almost of the same magnitude when
$\mathbf{p}_r$ is close to a solution. Similarly, $\| \cdot
\|_{D_F,\infty} = \| D_F \cdot \|_\infty$ for a diagonal matrix
$D_F$ when we are far from the solution. The value of the step
length $\lambda$ is computed via the Armijo-Goldstein line search
strategy, i.e., $\lambda$ is chosen to guarantee a sufficient
decrease in $\boldsymbol{\Phi}$ with respect to the step length as well
as a minimum step length to the initial rate of decrease of~$\mathbf{\Phi}$.

The Jacobian matrix $J = J_{\mathbf{\Phi}}$ can then be recovered by
direct computation from~\eqref{eq:dynamic_of_the_problem} 
using finite-difference approximations to the derivatives of the constitutive equations
in~\eqref{eq:s(p)}, and~\eqref{eq:K(p)} given by
\begin{equation}\label{eq:derivativeofcoefficients}
s'(p) = -\frac{\alpha  \beta  \left| p\right| ^{\beta -1} \text{sgn}(p) \left(S_s-S_r\right)}{\left(\alpha +\left| p\right| ^{\beta }\right)^2}, \text{ and } K'(p) = -\frac{a \gamma  k_s \left| p\right| ^{\gamma -1} \text{sgn}(p)}{\left(a+\left| p\right| ^{\gamma }\right)^2}.
\end{equation}
At the core of the parallel procedure we tackle  the
solution of the (right preconditioned) linear system
\begin{equation}\label{eq:precsystem}
J M^{-1} (M \mathbf{d}_r) = -\boldsymbol{\Phi}(\mathbf{p}_r),
\end{equation}
where $J$ could be either a freshly computed Jacobian for the vector
$\mathbf{p}_r$, or the Jacobian coming from a previous step.
The linear iterative solver for the Newton equations should handle  the
tradeoff between using a tolerance that is large enough to avoid
oversolving, and reducing the number of iterations to attain the 
prescribed convergence.
In the following we focus on a choice for the preconditioner $M$
in~\eqref{eq:precsystem} which allows to balance accuracy and efficiency
on parallel distributed-memory architectures when very large scale
simulations have to be carried out. 

In Section~\ref{sec:spectral-analysis} we  investigate the asymptotic spectral properties of
the sequence of the Jacobian matrices.  This information will then be used to devise
an \emph{asymptotically spectrally equivalent} 
	 sequence of symmetric and positive definite matrices. Then, to approximate the action of the inverses of
	the approximating sequence, we will employ some preconditioners from the PSCToolkit framework described in
	Section~\ref{sec:softwareframework}. The aim to experiment the functionalities of PSCToolkit for building
	and apply preconditioners for the linear systems arising in the quasi-Newton procedure by KINSOL, we
	developed some KINSOL modules that enable it to use the solvers and
    preconditioners from PSCToolkit inside its Newton-based non-linear procedure. These interfaces are written
    in C from the KINSOL library and use the C/Fortran2003 interfaces from the PSCToolkit libraries, 
    guaranteeing a full interoperability of the data structures, i.e., they do not require producing any
    auxiliary copy of KINSOL objects for translation
into PSCToolkit objects; everything can be manipulated from KINSOL directly into the native formats for PSCToolkit. The details about the
implementation of the relevant APIs, and the operators made available
by the interfacing are described in the documentation for the
interface that can be downloaded
from~\url{https://github.com/Cirdans-Home/kinsol-psblas}. This interfacing had the twofold aim of extending
PSCToolkit to handle non-linear algebraic equations, as well as to extend the KINSOL library with new
methods for solving sparse linear systems on high-end supercomputers.

\section{Spectral analysis of the Jacobian sequence}
\label{sec:spectral-analysis}

In general, the construction of the preconditioner $M$
in~\eqref{eq:precsystem} depends  on the
choice of the average for the discretization used
in~\eqref{eq:dynamic_of_the_problem}. To formulate a proposal for $M$, 
we divide the discussion in two steps: first we look for a suitable 
matrix $M$ to precondition the Jacobian matrix $J$ associated to the
different averages;  then we discuss how we can efficiently setup and apply $M^{-1}$ inside the Krylov subspace method on a high-end parallel
computer.

\subsection{Tools for the spectral analysis}
Our idea to compute $M$ starts by investigating the 
distribution of the eigenvalues,
$\{\lambda_j(J_{\mathbf{N}})\}_{\mathbf{j}=\mathbf{1}}^{\mathbf{N} = (N_x,N_y,N_z)}$, for the
Jacobian matrix $J_{\mathbf{N}}$ of size $N = N_x N_y N_z$.
Specifically, we look for a measurable function $f : D \subset
\mathbb{R}^{k} \rightarrow \mathbb{C}$ to associate to the sequence
$\{J_{\mathbf{N}} \}_{\mathbf{N}}$ for which we can prove the
following {\it asymptotic} relation
\[ \lim_{N \to\infty}\frac1{N}\sum_{i=1}^{N}F(\lambda_i(J_\mathbf{N}))=\frac1{\mu_k(D)}\int_DF(f(\mathbf{x})){\rm d}\mathbf{x},\qquad\forall\,F\in C_c(\mathbb C), \]
where $\mu_k(\cdot)$ is the Lebesgue measure on $\mathbb{R}^k$, $0 < \mu_k(D) < \infty$, and
$C_{c}(\mathbb{C})$ is the space of continuous functions with compact support. Despite the apparently
technical form of the previous expression, we can easily summarize  the information contained in $f$. If we
assume that $N$ is large enough, then the eigenvalues of the matrix $J_{\mathbf{N}}$, except possibly for
$o(N)$ outliers, are approximately equal to the samples of $f$ over a uniform grid in $D$, i.e., the function
$f$, that we will call {\it symbol} of the sequence of matrices $\{J_{\mathbf{N}}\}_{\mathbf{N}}$, provides an
accurate description of their spectrum asymptotically. %

In order to achieve this result, we need to introduce some preliminary tools. To
simplify notation, let us start by
focusing on the one dimensional problem to select only
an expression for $\prescript{\text{\scriptsize AV}}{}{K_{L,U}}$
related to the flux oriented toward the $z$ axis. For these cases, we
are interested in the overall behavior for both the eigenvalues and
the singular values of the Jacobian matrices. Formally, we are
interested in the computation of the so-called \textit{singular} and
\textit{spectral} value symbol for the sequence of the Jacobians.

\begin{definition}
	Let $\{A_N\}_N$ be a sequence of matrices and let $f:D\subset\mathbb
	R^k\to\mathbb C$ be a measurable function defined on a set $D$ with
	$0<\mu_k(D)<\infty$.
	\begin{itemize}
		\item $\{A_N\}_N$ has a singular value distribution described by $f$, and we write $\{A_N\}_N\sim_\sigma f$, if
		\[ \lim_{N\to\infty}\frac1{N}\sum_{i=1}^{N}F(\sigma_i(A_N))=\frac1{\mu_k(D)}\int_DF(|f(\mathbf{x})|){\rm d}\mathbf{x},\qquad\forall\,F\in C_c(\mathbb R). \]
		In this case, $f$ is called the {\it singular value symbol} of $\{A_N\}_N$.
		\item $\{A_N\}_N$ has a spectral (or eigenvalue) distribution described by $f$, and we write $\{A_N\}_N\sim_\lambda f$, if
		\[ \lim_{N\to\infty}\frac1{N}\sum_{i=1}^{N}F(\lambda_i(A_N))=\frac1{\mu_k(D)}\int_DF(f(\mathbf{x})){\rm d}\mathbf{x},\qquad\forall\,F\in C_c(\mathbb C). \]
		In this case, $f$ is called the {\it spectral (or eigenvalue) symbol} of $\{A_N\}_N$.
	\end{itemize}
	If $\{A_N\}_N$ has both a singular value and a spectral distribution
	described by $f$, we write $\{A_N\}_N\sim_{\sigma,\lambda}f$.
\end{definition}
Moreover, we refer to a sequence of matrices $\{Z_N\}_N$ such that
$\{Z_N\}_N\sim_\sigma0$ as a zero-distributed sequence. Examples of
sequences for which we can easily compute such symbols are the $n$th
diagonal sampling matrix generated by $a:[0,1]\to\mathbb C$ and
$N\in\mathbb N$, that is the $N\times N$ diagonal matrix given by
\begin{equation*}
D_N(a)=\mathop{\rm diag}_{i=1,\ldots,N}a\Bigl(\frac iN\Bigr),
\end{equation*}
and the Toeplitz sequences, i.e., for a $N\in\mathbb N$ and $f:[-\pi,\pi]\to\mathbb C$ a $L^1([-\pi,\pi])$ 
function, the $N\times N$ matrix
\[ T_N(f)=[f_{i-j}]_{i,j=1}^N, \]
where the numbers $f_k$ are the Fourier coefficients of $f$,
\[ f_k=\frac1{2\pi}\int_{-\pi}^\pi f(\theta){\rm e}^{-{\rm i}\,k\theta}{\rm d}\theta,\qquad k\in\mathbb Z. \]
For these sequences, if $f\in L^1([-\pi,\pi])$ then $\{T_N(f)\}_N\sim_\sigma f$, while if $f\in
L^1([-\pi,\pi])$ and $f$ is real then $\{T_N(f)\}_N\sim_\lambda f$. A similar relationship holds for the case
of multilevel Toeplitz matrices.

To compute the asymptotic spectral/singular value distribution of
more general matrix sequences we need to expand our set of tools, and 
for this task we use the Generalized Locally Toeplitz (GLT) sequences~\cite{Garoni2017,Garoni2018}. 
GLT sequences are  are sequences of matrices equipped with a measurable function
$\kappa:[0,1]\times[-\pi,\pi]\to\mathbb C$ called {\it symbol}; we will use the notation 
$\{A_N\}_N\sim_{\rm GLT}\kappa$ to  indicate that $\{A_N\}_N$ is a GLT sequence with symbol $\kappa$. 
We can characterize the sequences by the following list of properties.
\begin{description}
	\item[\textbf{GLT\,1.}] If $\{A_N\}_N\sim_{\rm GLT}\kappa$ then $\{A_N\}_N\sim_\sigma\kappa$. If $\{A_N\}_N\sim_{\rm GLT}\kappa$ and the matrices $A_N$ are Hermitian then $\{A_N\}_N\sim_\lambda\kappa$.
	\item[\textbf{GLT\,2.}] If $\{A_N\}_N\sim_{\rm GLT}\kappa$ and $A_N=X_N+Y_N$, where
	\begin{itemize}
		\item every $X_N$ is Hermitian,
		\item $\|X_N\|,\,\|Y_N\|\le C$ for some constant $C$ independent of $n$,
		\item $n^{-1}\|Y_N\|_1\to0$,
	\end{itemize}
	then $\{A_N\}_N\sim_\lambda\kappa$.
	\item[\textbf{GLT\,3.}] We have
	\begin{itemize}
		\item $\{T_N(f)\}_N\sim_{\rm GLT}\kappa(x,\theta)=f(\theta)$ if $f\in L^1([-\pi,\pi])$,
		\item $\{D_N(a)\}_N\sim_{\rm GLT}\kappa(x,\theta)=a(x)$ if $a:[0,1]\to\mathbb C$ is Riemann-integrable, %
		\item $\{Z_N\}_N\sim_{\rm GLT}\kappa(x,\theta)=0$ if and only if $\{Z_N\}_N\sim_\sigma0$.
	\end{itemize}
	\item[\textbf{GLT\,4.}] If $\{A_N\}_N\sim_{\rm GLT}\kappa$ and $\{B_N\}_N\sim_{\rm GLT}\xi$ then
	\begin{itemize}
		\item $\{A_N^*\}_N\sim_{\rm GLT}\overline\kappa$,
		\item $\{\alpha A_N+\beta B_N\}_N\sim_{\rm GLT}\alpha\kappa+\beta\xi$ for all $\alpha,\beta\in\mathbb C$,
		\item $\{A_NB_N\}_N\sim_{\rm GLT}\kappa\xi$.
	\end{itemize}
	\item[\textbf{GLT\,5.}] If $\{A_N\}_N\sim_{\rm GLT}\kappa$ and $\kappa\ne0$ a.e.\ then $\{A_N^\dag\}_N\sim_{\rm GLT}\kappa^{-1}$.
\end{description}
We call \emph{unilevel} sequences those for which the dimension is characterized by a single index $N$, we call $d$-level those in 
which the dimension is characterized by a multi-index $\mathbf{N} \in \mathbb{N}^d$. All definitions can be transparently generalized to this context.

\begin{remark}
	In all the following analysis, to simplify the notation, we remove
	from the vector $\mathbf{p}^{(k,l+1)}$ the index $k$ denoting the
	dependence on the iterate of the Newton method. The spectral
	analysis uses only the entries $p_i^{(l+1)}$ and does not
	depend on the step at which such a vector has been obtained.
\end{remark}

\subsection{Arithmetic average}\label{sec:arithmetic_average}
The simplest choice for the $\prescript{\text{\scriptsize
		AV}}{}{K_{L,U}}$ is given by the arithmetic mean of the values of
$K$ in~\eqref{eq:K(p)} at the two sides of the interface, i.e.,
\begin{equation*}
\begin{split}
\prescript{\text{\tiny ARIT}}{}{K_{i+1,i}^{(l+1)}} = & \frac{1}{2} \left(K(p^{(l+1)}_i) + K(p_{i+1}^{(l+1)})\right), \\ \prescript{\text{\tiny ARIT}}{}{K_{i-1,i}^{(l+1)}} = &  \frac{1}{2} \left(K(p^{(l+1)}_i) + K(p_{i-1}^{(l+1)})\right).
\end{split}
\end{equation*}
With this choice, equation~\eqref{eq:dynamic_of_the_problem} becomes
\begin{equation*}
\begin{split}
\prescript{{\text{\tiny ARIT}}}{}{\Phi}(p^{(l+1)}_i) = & \frac{S\left(p_i^{(l+1)}\right)-S\left(p_i^{(l)}\right)}{\Delta t} \\ & -\frac{1}{2{h_z}^2}\left[ \left(p_{i+1}^{(l+1)}-p_i^{(l+1)}\right) \left(K\left[p_{i+1}^{(l+1)}\right]+K\left[p_i^{(l+1)}\right]\right)\right.\\&\left.- \left(p_i^{(l+1)}-p_{i-1}^{(l+1)}\right) \left(K\left[p_{i-1}^{(l+1)}\right]+K\left[p_i^{(l+1)}\right]\right)\right] \\ & -\frac{1}{2 h_z}\left( K\left[p_{i+1}^{(l+1)}\right]-K\left[p_{i-1}^{(l+1)}\right] \right)
\end{split}
\end{equation*}
Then, the Jacobian matrix is, as in all the other one dimensional cases, a tridiagonal matrix with entries
\begin{equation}\label{eq:jacobian_arithmetic_mean}
\prescript{{\text{\tiny ARIT}}}{}{J_N} = \operatorname{tridiag}(\eta_i,\zeta_i,\xi_i) = \begin{bmatrix}
\zeta_1 & \xi_1 \\
\eta_1 & \ddots & \ddots \\
& \ddots &  \ddots & \xi_{N-3} \\
& & \eta_{N-3} & \zeta_{N-2}
\end{bmatrix}
\end{equation}
where,
\begingroup
\allowdisplaybreaks
\begin{align*}
\zeta_i = & \frac{\Delta t K\left[p_{i-1}^{(l+1)}\right]+\Delta t K\left[p_{i+1}^{(l+1)}\right]+2 \Delta t K\left[p_i^{(l+1)}\right]+2 h_z^2 S'\left(p_i^{(l+1)}\right)}{2 \Delta t h_z^2}\\ & -\frac{p_{i-1}^{(l+1)} K'\left(p_i^{(l+1)}\right)}{2 h_z^2}+\frac{p_i^{(l+1)} K'\left(p_i^{(l+1)}\right)}{h_z^2}-\frac{p_{i+1}^{(l+1)} K'\left(p_i^{(l+1)}\right)}{2 h_z^2},\\
\xi_i = & -\frac{K\left[p_{i+1}^{(l+1)}\right]+K\left[p_i^{(l+1)}\right]+h_z K'\left(p_{i+1}^{(l+1)}\right)}{2 h_z^2}-\frac{p_{i+1}^{(l+1)} K'\left(p_{i+1}^{(l+1)}\right)}{2 h_z^2}\\ & +\frac{p_i^{(l+1)} K'\left(p_{i+1}^{(l+1)}\right)}{2 h_z^2},\\
\eta_i = & -\frac{K\left[p_{i-1}^{(l+1)}\right]+K\left[p_i^{(l+1)}\right]-h_z K'\left(p_{i-1}^{(l+1)}\right)}{2 h_z^2}-\frac{p_{i-1}^{(l+1)} K'\left(p_{i-1}^{(l+1)}\right)}{2 h_z^2}\\ & +\frac{p_i^{(l+1)} K'\left(p_{i-1}^{(l+1)}\right)}{2 h_z^2}.
\end{align*}
\endgroup
To perform the spectral analysis we first rewrite the
Jacobian matrices as a sum of matrices that are easier  to investigate, 
then we use then the $*$-algebra (\textbf{GLT4}) and perturbation
techniques (\textbf{GLT2}) to obtain the spectral information we look for.
In particular, to analyze the Jacobian
in~\eqref{eq:jacobian_arithmetic_mean} we
separate it into three parts, one for the time-stepping, one
taking into account the Darcian diffusion, and the last one related to the
transport along the $z$-axis, i.e.,
\begin{equation*}
\{ \prescript{{\text{\tiny ARIT}}}{}{J_N} \}_N = \{ \prescript{{\text{\tiny ARIT}}}{}{D_N} \}_{N} + \{ \prescript{{\text{\tiny ARIT}}}{}{L_N} \}_N + \{ \prescript{{\text{\tiny ARIT}}}{}{T_N} \},
\end{equation*}
where $\{ \prescript{{\text{\tiny ARIT}}}{}{D_N} \}_{N}$ is the
scaled diagonal matrix sampling $s'(p)$ on the function $p$
approximated at the $(i+1)$th time step, i.e.,
\begin{equation}\label{eq:arithmeticdiagonalsampling}
\prescript{{\text{\tiny ARIT}}}{}{D_N} = \frac{1}{\Delta t} \operatorname{diag}(s'[p_i^{(l+1)}])_{i=1}^{N-2}.
\end{equation}
We can express the Darcian diffusion part as
\begingroup
\allowdisplaybreaks
\begin{align*}
\prescript{{\text{\tiny ARIT}}}{}{L_N} = & \frac{1}{2 h_z^2}\operatorname{tridiag}\left(-K\left[p_{i-1}^{(l+1)}\right],2 K\left[p_i^{(l+1)}\right],-K\left[p_{i+1}^{(l+1)}\right]\right)_{i=1}^{N-2} \\
& +  \frac{1}{2 h_z^2}\operatorname{tridiag}\left( 0,K\left[p_{i-1}^{(l+1)}\right],-K\left[p_i^{(l+1)}\right]\right)_{i=1}^{N-2} \\ & +  \frac{1}{2 h_z^2}\operatorname{tridiag}\left( -K\left[p_i^{(l+1)}\right],K\left[p_{i+1}^{(l+1)}\right],0\right)_{i=1}^{N-2} \\
& + \frac{1}{2 h_z^2}\operatorname{tridiag}\left(-p_{i-1}^{(l+1)} K'\left[p_{i-1}^{(l+1)}\right],2p_i^{(l+1)} K'\left[p_i^{(l+1)}\right], -p_{i+1}^{(l+1)} K'\left[p_{i+1}^{(l+1)}\right] \right)_{i=1}^{N-2} \\
& + \frac{1}{2 h_z^2}\operatorname{tridiag}\left(0,-p_{i-1}^{(l+1)} K'\left[p_i^{(l+1)}\right],p_i^{(l+1)} K'\left[p_{i+1}^{(l+1)}\right] \right)_{i=1}^{N-2} \\ & + \frac{1}{2 h_z^2}\operatorname{tridiag}\left( p_i^{(l+1)} K'\left[p_{i-1}^{(l+1)}\right],-p_{i+1}^{(l+1)} K'\left[p_i^{(l+1)}\right] ,0\right)_{i=1}^{N-2},
\end{align*}
\endgroup
while the remaining part is given by
\begin{equation}\label{eq:arithmetictransportterm}
\begin{split}
\prescript{{\text{\tiny ARIT}}}{}{T_N} = & \frac{1}{2} \operatorname{tridiag}\left(-K'\left[p_{i-1}^{(l+1)}\right],0,K'\left[p_{i+1}^{(l+1)}\right]\right)_{i=1}^{N-2}.
\end{split}
\end{equation}

\begin{lemma}\label{lem:symboltime}
	The matrix sequence $\{h_z^2 \prescript{{\text{\tiny ARIT}}}{}{D_N}\}_N$ has GLT symbol $C s'(p(\psi(x)))$, for $s'$ in~\eqref{eq:derivativeofcoefficients}, for $\psi(x)$ the function mapping the $[0,1]$ interval to the domain of definition for the $z$ variable, and $C = \lim_{N,N_t \rightarrow +\infty} h_z^2/\Delta t$.
\end{lemma}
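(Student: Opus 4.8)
\emph{Proof plan.} By~\eqref{eq:arithmeticdiagonalsampling} we have $h_z^2\,\prescript{{\text{\tiny ARIT}}}{}{D_N}=(h_z^2/\Delta t)\,\operatorname{diag}_{i=1,\ldots,N-2}\bigl(s'[p_i^{(l+1)}]\bigr)$, so the plan is to recognize the diagonal matrix on the right as a diagonal sampling matrix in the sense of \textbf{GLT\,3}, up to a zero-distributed perturbation, and then to read off the symbol using the $*$-algebra rules \textbf{GLT\,4}. First I would fix the link between indices and points of the $z$-domain: the internal cell centers along the $z$-axis are the points $i h_z$ with $h_z=L/(N-1)$, and since $\psi$ reparametrizes $[0,1]$ onto that domain, the $i$-th node lies at $\psi(i/N)$ up to an $O(1/N)$ shift; hence $p_i^{(l+1)}$ equals the sample $p(\psi(i/N))$ of the pressure-head profile at time $t_{l+1}$ up to a term vanishing uniformly in $i$. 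I would then write
\[
\operatorname{diag}_{i=1,\ldots,N-2}\bigl(s'[p_i^{(l+1)}]\bigr)=D_N\bigl(s'\!\circ p\circ\psi\bigr)+E_N ,
\]
where $D_N(\cdot)$ denotes the analogous diagonal sampling matrix of order $N-2$ (dropping a fixed number of nodes and shifting the sampling grid by $O(1/N)$ leaves the GLT symbol unchanged) and the diagonal correction $E_N$ collects the grid shift together with the discretization error; by uniform continuity of $s'$ every entry of $E_N$ is $o(1)$, so $\|E_N\|=\|E_N\|_\infty\to0$ and $\{E_N\}_N$ is zero-distributed.

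Next I would check the hypotheses of \textbf{GLT\,3} for the leading term. The candidate symbol $s'\!\circ p\circ\psi$ is Riemann-integrable on $[0,1]$: the reparametrized profile $p(\cdot,t_{l+1})\circ\psi$ is, and $s'$ as written in~\eqref{eq:derivativeofcoefficients} is continuous and bounded on $\mathbb R$, since for $\beta>1$ the factor $|p|^{\beta-1}$ removes the apparent singularity at $p=0$ while $(\alpha+|p|^{\beta})^{2}$ dominates the numerator as $|p|\to\infty$ (so $s'$ is in fact uniformly continuous and vanishes at $\pm\infty$). Hence $\{D_N(s'\!\circ p\circ\psi)\}_N\sim_{\rm GLT} s'(p(\psi(x)))$. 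For the scalar prefactor, write $h_z^2/\Delta t=C+\varepsilon_N$ with $\varepsilon_N\to0$, the limit $C$ existing by hypothesis (in one space dimension this amounts to the parabolic balance $N_t\sim c\,N^2$); then
\[
h_z^2\,\prescript{{\text{\tiny ARIT}}}{}{D_N}=C\,D_N\bigl(s'\!\circ p\circ\psi\bigr)+\bigl(C\,E_N+\varepsilon_N D_N(s'\!\circ p\circ\psi)+\varepsilon_N E_N\bigr),
\]
and the bracketed remainder has spectral norm at most $(C+|\varepsilon_N|)\,\|E_N\|+|\varepsilon_N|\,\|s'\!\circ p\circ\psi\|_\infty\to0$, hence is zero-distributed and, by the third bullet of \textbf{GLT\,3}, a GLT sequence with symbol $0$. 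Applying \textbf{GLT\,4} (closure under scalar multiplication and sums) then gives $\{h_z^2\,\prescript{{\text{\tiny ARIT}}}{}{D_N}\}_N\sim_{\rm GLT} C\,s'(p(\psi(x)))$; and since the matrices are real diagonal, hence Hermitian, \textbf{GLT\,1} yields in addition the matching singular value and eigenvalue distributions.

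The step I expect to be the genuine obstacle is the passage from the honestly discrete nodal values $p_i^{(l+1)}$ to samples of a fixed Riemann-integrable function of $z$, that is, the uniform bound $\max_i|p_i^{(l+1)}-p(\psi(i/N))|\to0$: this is essentially a uniform-convergence statement for the backward Euler cell-centered finite-difference scheme and lies outside the reach of the GLT machinery. In keeping with the remark preceding the lemma, I would either posit it as an a priori regularity assumption on the computed solution, or simply adopt the convention that the $p_i^{(l+1)}$ \emph{are} the samples of such a function; granted that, the remaining steps are a routine application of the GLT axioms recalled above.
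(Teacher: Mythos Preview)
Your proposal is correct and follows the same approach as the paper: recognize $\prescript{{\text{\tiny ARIT}}}{}{D_N}$ as a diagonal sampling matrix and invoke \textbf{GLT\,3}, then scale by the constant $C=\lim h_z^2/\Delta t$. The paper's proof is in fact extremely terse---it simply observes the compatibility condition on $h_z^2/\Delta t$, notes that $\prescript{{\text{\tiny ARIT}}}{}{D_N}$ is the $(N-2)$th diagonal sampling matrix for $s'(p(\cdot))$, and concludes directly from \textbf{GLT\,3}; your version fills in details the paper elides (the zero-distributed corrections $E_N$ and $\varepsilon_N$, the Riemann-integrability check, and the identification of nodal values with samples of a continuous profile), and is explicit about the one nontrivial assumption the paper leaves implicit.
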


\begin{proof}
	Observe that the ratio above is such that
	\begin{equation*}
	\exists C > 0\,:\,\frac{h_z^2}{\Delta t} \sim C, \qquad \text{for }h_z,\Delta t \rightarrow 0, \quad(N,N_t\rightarrow +\infty),
	\end{equation*}
	i.e., it is bounded whenever we impose the compatibility conditions for the relation between 
	the time and space discretization. We observe that $\prescript{{\text{\tiny ARIT}}}{}{D_N}$ is the
	$(N-2)$th diagonal sampling matrix for the function $K(p(z))$. Therefore, it is one of the sequences of
	which we know the distribution (\textbf{GLT3}), and we conclude that
	\[
	\{h_z^2\, \prescript{{\text{\tiny ARIT}}}{}{D_N} \}_N \sim_{\rm GLT} C s'(p(\psi(z))). \qedhere
	\]
\end{proof}

\begin{lemma}\label{lem:symboldiffusion}
	The matrix sequence $\{h_z^2\, \prescript{{\text{\tiny ARIT}}}{}{L_N}\}_N$ has GLT symbol
	\[ K(p(\psi(x))) (2-2\cos(\theta)),\]
	where $\psi(x)$ is the function mapping $[0,1]$ interval to the domain of definition for the $z$ variable.
\end{lemma}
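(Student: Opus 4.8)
\emph{Proof plan.}
The plan is to follow the split-then-read-off strategy announced just before the lemma: decompose $h_z^2\,\prescript{{\text{\tiny ARIT}}}{}{L_N}$ into a genuine GLT sequence plus a zero-distributed remainder and then invoke the $*$-algebra and perturbation axioms \textbf{GLT3}--\textbf{GLT4}. Multiplying each of the six tridiagonal summands of $\prescript{{\text{\tiny ARIT}}}{}{L_N}$ by $h_z^2$ turns the shared prefactor $\tfrac1{2h_z^2}$ into $\tfrac12$, so it is enough to study $M^{(1)}_N+M^{(2)}_N$, where $M^{(1)}_N$ gathers the three summands built from the nodal values $K[p_i^{(l+1)}]$ and $M^{(2)}_N$ the three built from $p_i^{(l+1)}K'[p_i^{(l+1)}]$.

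The first step would be to show $\{M^{(2)}_N\}_N\sim_{\rm GLT}0$. Adding its three tridiagonal pieces, the sub- and super-diagonal entries telescope to first differences of the type $(p_i^{(l+1)}-p_{i\pm1}^{(l+1)})K'[p_{i\pm1}^{(l+1)}]$ and the main diagonal to the second difference $-(p_{i-1}^{(l+1)}+p_{i+1}^{(l+1)}-2p_i^{(l+1)})K'[p_i^{(l+1)}]$. Treating, exactly as in the proof of Lemma~\ref{lem:symboltime}, the vector $(p_i^{(l+1)})_i$ as the sampling on a grid of width proportional to $h_z$ of the smooth bounded function $p(\psi(x))$, and using that $K'$ in~\eqref{eq:derivativeofcoefficients} is bounded on the admissible pressure range, those entries are $O(h_z)$ and $O(h_z^2)$ respectively. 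Hence $M^{(2)}_N$ is tridiagonal with entries tending to zero, so $\|M^{(2)}_N\|\to0$, whence $\{M^{(2)}_N\}_N\sim_\sigma0$ and \textbf{GLT3} gives $\{M^{(2)}_N\}_N\sim_{\rm GLT}0$. This cancellation is exactly what keeps $K'$ out of the final symbol.

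The second step is the symbol of $\{M^{(1)}_N\}_N$. Summing its three pieces and abbreviating $a_i=K[p_i^{(l+1)}]$, row $i$ reads $-(a_{i-1}+a_i)$, $(a_{i-1}+a_i)+(a_i+a_{i+1})$, $-(a_i+a_{i+1})$ on the sub-, main- and super-diagonal, i.e.\ the conservative finite-difference stencil for $-(K(p)\,p_z)_z$ with arithmetic-mean interface conductivities. Comparing entrywise with $2\,D_N(a)\,T_N(2-2\cos\theta)$, the difference is tridiagonal with entries $a_i-a_{i\pm1}=O(h_z)$ and $a_{i-1}+a_{i+1}-2a_i=O(h_z^2)$ --- here $a_i$ is the sampling of $K(p(\psi(x)))$, which is smooth, hence Riemann-integrable --- so it is again zero-distributed. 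Since $\{D_N(a)\}_N\sim_{\rm GLT}K(p(\psi(x)))$ and $\{T_N(2-2\cos\theta)\}_N\sim_{\rm GLT}2-2\cos\theta$ by \textbf{GLT3}, the $*$-algebra property \textbf{GLT4} yields $\{M^{(1)}_N\}_N\sim_{\rm GLT}2\,K(p(\psi(x)))(2-2\cos\theta)$. A final application of \textbf{GLT4} to $h_z^2\,\prescript{{\text{\tiny ARIT}}}{}{L_N}=\tfrac12M^{(1)}_N+\tfrac12M^{(2)}_N$ then produces the claimed symbol $K(p(\psi(x)))(2-2\cos\theta)$.

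The delicate point, common to both steps, is the right to regard the discrete state $(p_i^{(l+1)})_i$ as the grid sampling of a fixed, $N$-independent function $p(\psi(\cdot))$, which is what makes the difference estimates $p_i^{(l+1)}-p_{i\pm1}^{(l+1)}=O(h_z)$ legitimate and what turns the diagonals $D_N(a)$ into bona fide diagonal sampling sequences in the sense of \textbf{GLT3}. This has to be underpinned either by a standing regularity assumption on the (semi-)discrete solution or, more rigorously, by the approximating-classes-of-sequences apparatus of GLT theory, which simultaneously absorbs the harmless mismatch between the matrix size $N-2$ and $N$ and the boundary modifications of the stencil (a low-rank, hence zero-distributed, perturbation). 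Everything else is the bookkeeping of expanding the six stencils and invoking \textbf{GLT3}--\textbf{GLT4}.
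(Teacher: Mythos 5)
Your proof is correct and follows essentially the same route as the paper's: compare each variable-coefficient tridiagonal block with a product $D_N(\cdot)\,T_N(\cdot)$, bound the entrywise difference by the modulus of continuity of $K(p(z))$ so that the correction is zero-distributed, and conclude with \textbf{GLT3}--\textbf{GLT4}. The only organizational difference is that you dispose of the three $p\,K'(p)$ summands at the matrix level (their sum has all entries $O(h_z)$, hence vanishing norm), whereas the paper assigns each of them a nonzero GLT symbol and lets the cancellation occur in the symbol algebra; your variant is, if anything, slightly cleaner, and your closing caveat about treating $(p_i^{(l+1)})_i$ as samples of a fixed function correctly identifies the standing assumption the paper makes implicitly.
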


\begin{proof}
	Let us start from the first tridiagonal matrix
	\[ A_N =  \frac{1}{2}\operatorname{tridiag}\left(-K\left[p_{i-1}^{(l+1)}\right],2 K\left[p_i^{(l+1)}\right],-K\left[p_{i+1}^{(l+1)}\right]\right)_{i=1}^{N-2}, \]
	to produce its GLT symbol, we can consider the matrix sequence
	\[ D_N(K(p^{(l+1)})) T_N(2-2\cos\theta).\] If we then perform a direct
	comparison of \[A_N -\frac{1}{2}D_N(K(p^{(l+1)}))
	T_N(2-2\cos\theta),\] we observe that the only nonzero entries are the
	ones on the lower and upper diagonals, in which we have the
	differences $K(p^{(l+1)}_{i\pm 1}) - K(p^{(l+1)}_{i})$. From this, we
	bound the modulus of each off-diagonal entries of \[A_N
	-\frac{1}{2}D_N(K(p^{(l+1)})) T_N(2-2\cos\theta),\] by the modulus of
	continuity of $\frac{1}{2}\omega_{K(p(z))}(h_z)$ and then the $1$-norm
	and the $\infty$-norm of the difference $A_N
	-\frac{1}{2}D_N(K(p^{(l+1)})) T_N(2-2\cos\theta)$ are bounded by
	$\omega_{K(p(z))}(h_z)$. Thus,
	
	\begin{equation*}
	\| A_N -\frac{1}{2}D_N(K(p^{(l+1)})) T_N(2-2\cos\theta) \| \leq
	\omega_{K(p(z))}(h_z) \rightarrow 0 \text{ for } N \rightarrow
	+\infty,
	\end{equation*}
	therefore, $Z_N = A_N -\frac{1}{2}D_N(K(p^{(l+1)}))
	T_N(2-2\cos\theta)$ is distributed as zero. Then, a direct application
	of \textbf{GLT3} and \textbf{GLT4} tells us that $A_N \sim_{\rm GLT}
	1/2 K(p(\psi(x)))(2-2\cos(\theta))$. With minor modifications, the
	same arguments can be applied to the other parts of $\{h_z^2\,
	\prescript{{\text{\tiny ARIT}}}{}{L_N} \}_N$, and by means of the
	*-algebra properties from \textbf{GLT4}, we can write
	\begin{equation*}
	\begin{split}
	\{ h^2_z \prescript{{\text{\tiny ARIT}}}{}{L_N}\}_N \sim_{\rm GLT} & \frac{1}{2}K(p(\psi(x)))(2-2\cos(\theta)) \\ & + \frac{1}{2} K(p(\psi(x)))(2-2\cos(\theta))(1-e^{-i\theta}) \\ & + \frac{1}{2}K(p(\psi(x)))(2-2\cos(\theta))(1-e^{i\theta})\\ & +
	\frac{1}{2}p(\psi(x))K'(p(\psi(x)))(2-2\cos(\theta)) \\
	& + \frac{1}{2}p(\psi(x))K'(p(\psi(x)))(-1 + e^{i\theta}) \\ & +\frac{1}{2}p(\psi(x))K'(p(\psi(x)))(-1 + e^{i\theta}) \\
	= & K(p(\psi(x))) (2-2\cos(\theta). \qedhere
	\end{split}
	\end{equation*}
\end{proof}

\begin{remark}\label{rmk:onlythelaplacianmatters}
	In the computation of the GLT symbol of the sequence $\prescript{{\text{\tiny ARIT}}}{}{L_N}$ we have 
	that a part of the distribution simplifies itself. This appears as if there was a simplification 
	of the lower order terms induced by the arithmetic mean, i.e., in a \textit{chain-rule} 
	style the correction due to the term $p K'(p)$ cancels out.
\end{remark}

\begin{lemma}\label{lem:symboltransport}
	The matrix sequence $\{h_z^2\, \prescript{{\text{\tiny ARIT}}}{}{T_N}\}_N$ is  a sequence distributed as zero.
\end{lemma}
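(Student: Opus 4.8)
The plan is to show directly that the matrix sequence $\{h_z^2\, \prescript{{\text{\tiny ARIT}}}{}{T_N}\}_N$ tends to the zero matrix in the spectral norm, and then to invoke the fact that a matrix sequence whose spectral norm vanishes is zero-distributed. First I would recall from~\eqref{eq:arithmetictransportterm} that
\[
h_z^2\, \prescript{{\text{\tiny ARIT}}}{}{T_N} = \frac{h_z^2}{2}\operatorname{tridiag}\left(-K'\left[p_{i-1}^{(l+1)}\right],\,0,\,K'\left[p_{i+1}^{(l+1)}\right]\right)_{i=1}^{N-2},
\]
i.e.\ a tridiagonal matrix with vanishing main diagonal whose off-diagonal entries all have the form $\pm\tfrac{h_z^2}{2}K'(p_j^{(l+1)})$. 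The crucial structural point is that, unlike the Darcian diffusion block, the transport block $\prescript{{\text{\tiny ARIT}}}{}{T_N}$ carries no factor $h_z^{-2}$, so multiplying by $h_z^2$ produces an $O(h_z^2)$ perturbation.

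Next I would exploit the boundedness of $K'$. Since $K$ in~\eqref{eq:K(p)} is continuously differentiable on $(-\infty,0]$ with $K'$ as in~\eqref{eq:derivativeofcoefficients}, and since the grid values $p_i^{(l+1)}$ are samples of a fixed pressure-head profile defined on the (bounded) domain of the $z$ variable, there is a constant $M>0$, independent of $N$, such that $|K'(p_i^{(l+1)})|\le M$ for every $i$ and every $N$. Consequently each row and each column of $h_z^2\, \prescript{{\text{\tiny ARIT}}}{}{T_N}$ contains at most two nonzero entries, each of modulus at most $h_z^2 M/2$, which yields $\|h_z^2\, \prescript{{\text{\tiny ARIT}}}{}{T_N}\|_1\le h_z^2 M$ and $\|h_z^2\, \prescript{{\text{\tiny ARIT}}}{}{T_N}\|_\infty\le h_z^2 M$, whence, by the interpolation bound $\|\cdot\|\le\sqrt{\|\cdot\|_1\|\cdot\|_\infty}$,
\[
\|h_z^2\, \prescript{{\text{\tiny ARIT}}}{}{T_N}\|\le h_z^2 M.
\]

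Finally I would conclude: because $h_z=L/(N-1)\to0$ as $N\to\infty$, the right-hand side tends to $0$, so $\|h_z^2\, \prescript{{\text{\tiny ARIT}}}{}{T_N}\|\to0$. A matrix sequence with vanishing spectral norm is zero-distributed: all its singular values are bounded by the spectral norm, hence $\tfrac1N\sum_{i=1}^{N-2}F(\sigma_i(h_z^2\, \prescript{{\text{\tiny ARIT}}}{}{T_N}))\to F(0)$ for every $F\in C_c(\mathbb R)$, which is precisely the singular-value distribution attached to the identically zero symbol; equivalently one may appeal to the characterization of zero-distributed sequences in \textbf{GLT3}. I do not expect any genuine obstacle here; the only point that deserves an explicit word is the uniform bound $|K'(p_i^{(l+1)})|\le M$, i.e.\ the standing assumption that the pressure head stays in a fixed bounded range under mesh refinement, which underlies the whole spectral analysis. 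As an alternative, one could first identify $\prescript{{\text{\tiny ARIT}}}{}{T_N}$ itself as a GLT sequence with symbol $\mathrm{i}\,K'(p(\psi(x)))\sin\theta$, arguing as in the proof of Lemma~\ref{lem:symboltransport} — pardon, Lemma~\ref{lem:symboldiffusion} — with the Toeplitz generator $\operatorname{tridiag}(-1,0,1)$ replacing $T_N(2-2\cos\theta)$, and then note that multiplication by the vanishing factor $h_z^2$ kills the symbol; but the direct norm estimate is shorter and self-contained.
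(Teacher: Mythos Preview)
Your proposal is correct and follows essentially the same route as the paper: bound the spectral norm of $h_z^2\,\prescript{{\text{\tiny ARIT}}}{}{T_N}$ by a quantity that vanishes as $N\to\infty$ and conclude zero-distribution. The paper's argument is terser (it writes $\|h_z^2\,\prescript{{\text{\tiny ARIT}}}{}{T_N}\|\le h_z\|K'(p)\|_\infty\le C/N$, implicitly treating $T_N$ as carrying the $1/h_z$ factor from the central difference of the transport term, whereas you take the displayed formula~\eqref{eq:arithmetictransportterm} at face value and obtain the stronger $O(h_z^2)$ bound), but the mechanism is identical and the discrepancy in the power of $h_z$ is immaterial to the conclusion.
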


\begin{proof}
	By construction, the matrices of the sequence $\{h_z^2\, \prescript{{\text{\tiny ARIT}}}{}{T_N}\}_N$ 
	are such that
	\begin{equation*}
	\| h_z^2\, \prescript{{\text{\tiny ARIT}}}{}{T_N} \| \leq h_z \|K'(p)\|_{\infty} \leq \frac{C}{N},
	\end{equation*}
	for some constant $C$ independent of $N$. Therefore, $\{h_z^2\, \prescript{{\text{\tiny ARIT}}}{}{T_N}\}_N \sim_{\sigma} 0$.
\end{proof}

\begin{theorem}\label{thm:eigenvaluedistributionwitharithmeticmean}
	The matrix sequence $\{h_z^2\, \prescript{{\text{\tiny ARIT}}}{}{J_N}\}_N$ is distributed in
	the eigenvalue sense as the function
	\begin{equation*}
	\kappa(x,\theta) = C s'(p(\psi(x))) + K(p(\psi(x))) (2-2\cos(\theta)),
	\end{equation*}
	where $\psi(x)$ is the function mapping $[0,1]$ interval to the domain of definition for the $x$ variable.
\end{theorem}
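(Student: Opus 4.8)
The plan is to identify the GLT symbol of $\{h_z^2\,\prescript{{\text{\tiny ARIT}}}{}{J_N}\}_N$ from the additive splitting $\prescript{{\text{\tiny ARIT}}}{}{J_N}=\prescript{{\text{\tiny ARIT}}}{}{D_N}+\prescript{{\text{\tiny ARIT}}}{}{L_N}+\prescript{{\text{\tiny ARIT}}}{}{T_N}$ and then to promote that symbol from a singular-value distribution to an eigenvalue distribution. For the first part, Lemma~\ref{lem:symboltime} gives $\{h_z^2\,\prescript{{\text{\tiny ARIT}}}{}{D_N}\}_N\sim_{\rm GLT}Cs'(p(\psi(x)))$, Lemma~\ref{lem:symboldiffusion} gives $\{h_z^2\,\prescript{{\text{\tiny ARIT}}}{}{L_N}\}_N\sim_{\rm GLT}K(p(\psi(x)))(2-2\cos\theta)$, and Lemma~\ref{lem:symboltransport} together with the ``only if'' clause of \textbf{GLT\,3} (a sequence is GLT with symbol $0$ iff it is zero-distributed) gives $\{h_z^2\,\prescript{{\text{\tiny ARIT}}}{}{T_N}\}_N\sim_{\rm GLT}0$. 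Since the GLT class is a $*$-algebra, the linearity in \textbf{GLT\,4} yields at once $\{h_z^2\,\prescript{{\text{\tiny ARIT}}}{}{J_N}\}_N\sim_{\rm GLT}\kappa$ with $\kappa(x,\theta)=Cs'(p(\psi(x)))+K(p(\psi(x)))(2-2\cos\theta)$, exactly as in the statement, and \textbf{GLT\,1} already delivers $\{h_z^2\,\prescript{{\text{\tiny ARIT}}}{}{J_N}\}_N\sim_\sigma\kappa$.

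The eigenvalue distribution does not follow from \textbf{GLT\,1} because the Jacobians are not Hermitian: the gravity contribution $\prescript{{\text{\tiny ARIT}}}{}{T_N}$ and the lower-order pieces of $\prescript{{\text{\tiny ARIT}}}{}{L_N}$ break the symmetry of the tridiagonal matrix. I would therefore invoke \textbf{GLT\,2}: write $h_z^2\,\prescript{{\text{\tiny ARIT}}}{}{J_N}=X_N+Y_N$ with $X_N$ the Hermitian part and $Y_N$ the skew-Hermitian part. Both are tridiagonal with entries that are fixed finite combinations of the uniformly bounded quantities $K[p_\bullet^{(l+1)}]$, $S'[p_\bullet^{(l+1)}]$, $p_\bullet^{(l+1)}K'[p_\bullet^{(l+1)}]$, $h_zK'[p_\bullet^{(l+1)}]$ and of the bounded ratio $h_z^2/\Delta t$; hence $\|X_N\|,\|Y_N\|\le C$ by a Gershgorin/row-sum estimate. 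The substantive point is $N^{-1}\|Y_N\|_1\to0$. Since $h_z^2\,\prescript{{\text{\tiny ARIT}}}{}{J_N}$ has real diagonal, $Y_N$ has zero diagonal, and its only nonzero entries are $\pm\tfrac12\,h_z^2(\xi_i-\eta_{i+1})$ on the two off-diagonals, because $(\prescript{{\text{\tiny ARIT}}}{}{J_N})_{i,i+1}=\partial_{p_{i+1}}\Phi_i=\xi_i$ while $(\prescript{{\text{\tiny ARIT}}}{}{J_N})_{i+1,i}=\partial_{p_i}\Phi_{i+1}=\eta_{i+1}$ from the explicit expressions below~\eqref{eq:jacobian_arithmetic_mean}. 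Substituting those expressions and cancelling the $O(h_z^{-2})$ terms gives, writing $p_\bullet$ for $p_\bullet^{(l+1)}$,
\begin{equation*}
\begin{aligned}
h_z^2(\xi_i-\eta_{i+1}) = {} & -\tfrac{h_z}{2}\bigl(K'(p_{i+1})+K'(p_i)\bigr)-\tfrac12\bigl(p_{i+1}K'(p_{i+1})-p_iK'(p_i)\bigr)\\
& +\tfrac12\bigl(p_iK'(p_{i+1})-p_{i+1}K'(p_i)\bigr).
\end{aligned}
\end{equation*}
The first term is $O(h_z)$; the second is the one-step increment of $z\mapsto p(\psi(z))\,K'(p(\psi(z)))$, hence $O(\omega(h_z))$; and the third, rewritten as $p_i\bigl(K'(p_{i+1})-K'(p_i)\bigr)+K'(p_i)\bigl(p_i-p_{i+1}\bigr)$, is again $O(\omega(h_z))$, where $\omega$ is a common modulus of continuity that vanishes as $h_z\to0$ because the sampled pressure profile $p(\psi(\cdot))$ is (uniformly) continuous on $[0,1]$ and $K,K'$ are continuous. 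Thus $\varepsilon_N:=\sup_i|h_z^2(\xi_i-\eta_{i+1})|\to0$; since a matrix supported on a single off-diagonal has trace norm equal to the sum of the moduli of its entries, $\|Y_N\|_1\le(N-2)\,\varepsilon_N$, whence $N^{-1}\|Y_N\|_1\to0$. The block $h_z^2\,\prescript{{\text{\tiny ARIT}}}{}{T_N}$ has operator norm $O(1/N)$ by Lemma~\ref{lem:symboltransport}, so it contributes only $O(1/N)$ to this estimate and is harmless.

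Combining the two parts, \textbf{GLT\,2} — applied with the symbol $\kappa$ from the first part and the splitting $X_N+Y_N$ from the second — gives $\{h_z^2\,\prescript{{\text{\tiny ARIT}}}{}{J_N}\}_N\sim_\lambda\kappa$, which is the assertion. The main obstacle I anticipate is exactly the displayed identity: one must verify that, thanks to the arithmetic mean, the skew-symmetric part of the Jacobian telescopes down to an $O(h_z)+O(\omega(h_z))$ remainder — the matrix-level shadow of the ``chain-rule cancellation'' already observed in Remark~\ref{rmk:onlythelaplacianmatters}. Everything else is a routine application of the GLT axioms.
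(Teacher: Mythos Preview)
Your proof is correct and follows the same two-step skeleton as the paper: first combine Lemmas~\ref{lem:symboltime}, \ref{lem:symboldiffusion}, \ref{lem:symboltransport} via \textbf{GLT\,4} (and \textbf{GLT\,3}) to obtain $\{h_z^2\,\prescript{{\text{\tiny ARIT}}}{}{J_N}\}_N\sim_{\rm GLT}\kappa$, then invoke \textbf{GLT\,2} for the eigenvalue distribution.

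The difference lies in how \textbf{GLT\,2} is justified. The paper simply says ``by \textbf{GLT\,2}, that holds in virtue of Lemma~\ref{lem:symboltransport},'' implicitly treating $h_z^2\,\prescript{{\text{\tiny ARIT}}}{}{T_N}$ as the small correction $Y_N$. Taken literally this is incomplete, since $h_z^2(\prescript{{\text{\tiny ARIT}}}{}{D_N}+\prescript{{\text{\tiny ARIT}}}{}{L_N})$ is not Hermitian either: the $pK'(p)$ pieces of $\prescript{{\text{\tiny ARIT}}}{}{L_N}$ contribute further asymmetry beyond the transport term. You close this gap by taking the canonical Hermitian/skew-Hermitian split and computing $h_z^2(\xi_i-\eta_{i+1})$ explicitly, showing that \emph{all} asymmetric contributions collapse to $O(h_z)+O(\omega(h_z))$ so that $N^{-1}\|Y_N\|_1\to0$. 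This is the rigorous content behind the paper's one-line appeal to Lemma~\ref{lem:symboltransport}, and your identification of it with the chain-rule cancellation of Remark~\ref{rmk:onlythelaplacianmatters} is apt. Your argument is therefore the same in spirit but strictly more complete on the point where the paper is terse.
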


\begin{proof}
	The proof follows in two step from applying \textbf{GLT4} with the symbols obtained in Lemmas~\ref{lem:symboltime}, \ref{lem:symboldiffusion}, and~\ref{lem:symboltransport}, we first find that $\{h_z^2\, \prescript{{\text{\tiny ARIT}}}{}{J_N}\}_N\sim_{\rm GLT} \kappa(z,\theta) = C s'(p(\psi(x))) + K(p(\psi(x))) (2-2\cos(\theta))$. Then, by \textbf{GLT2}, that holds in virtue of Lemma~\ref{lem:symboltransport}, we have that the distribution holds also in the eigenvalue sense.
\end{proof}

\begin{remark}
	From Theorem~\ref{thm:eigenvaluedistributionwitharithmeticmean} we
	infer that the ill-conditioning in the Jacobian matrices is
	determined by two main factors. On the one hand, we have the
	ill-conditioning due to the diffusion operators that drives the
	eigenvalues to zero as an $O(h_z^2)$. On the other, we observe that this
	effect is enhanced by the behavior of the hydraulic conductivity $K(p)$ at
	the current time step/Newton iterate. Indeed, for lower values of the
	pressure head, this further enhances the decay to zero of the eigenvalues.
	This implies also that, when the soil becomes more saturated, the
	ill-conditioning is due only to the diffusion.
\end{remark}

Now, with the choice of this average for all the spatial dimensions, we
can explicitly write the nonzero elements of the Jacobian matrix by
maintaining the local $(i,j,k)$-indices, i.e., in a way that is
indepedent of the selected ordering; see the supplementary materials
Section~\ref{sec:jacobian_arithmetic} for the complete expression. To
complete the discretization we only need  to impose and discretize the
boundary conditions. For the test case considered here we focus on
Dirichlet boundary conditions that can be either homogeneous, to identify
water table boundary conditions, or time dependent.
\begin{theorem}\label{thm:spectraldistribution_jacobians_3D_arithmetic_mean}
	The sequence $\{ J_{\mathbf{N}}^{(k,j)} \}_\mathbf{N}$ obtained with
	the entries in~\eqref{eq:jacobian_arithmeticaverage}, for $K(p)$,
	$s(p)$ in~\eqref{eq:s(p)}, \eqref{eq:K(p)} is distributed in the
	sense of the eigenvalues as the function
	\begin{equation*}
	\begin{split}
	f(\mathbf{x},\theta) & = C \rho \phi s'(\mathbf{p}^{(k,j)}(\psi(\mathbf{x}))) \qquad \mathbf{x} \in [0,1]^3, \; \mathbf{\theta} \in [-\pi,\pi]^3, \\ & + K(\mathbf{p}^{(k,j)}(\psi(\mathbf{x})))(8 - 2\cos(\theta_1) - 2\cos(\theta_2) - 2\cos(\theta_3)),
	\end{split}
	\end{equation*}
	where $\psi(\mathbf{x})$ is the function mapping the $[0,1]^3$ cube to the physical domain, and $C = \lim_{\mathbf{N},N_T \rightarrow \infty} \frac{\mathbf{h}}{\Delta t}$.
\end{theorem}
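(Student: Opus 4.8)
The plan is to reproduce, now at the $d$-level with $d=3$, the three–term splitting and the zero–distributed perturbation argument already used in the one–dimensional case. First I would write the explicit entries of $J_{\mathbf{N}}^{(k,j)}$ in the local $(i,j,k)$ indexing (supplementary material, equation~\eqref{eq:jacobian_arithmeticaverage}, Section~\ref{sec:jacobian_arithmetic}) and decompose, after the same scaling as in Lemmas~\ref{lem:symboltime}--\ref{lem:symboltransport},
\[
\{ \mathbf{h}\, J_{\mathbf{N}}^{(k,j)} \}_{\mathbf{N}} = \{ \mathbf{h}\, D_{\mathbf{N}} \}_{\mathbf{N}} + \{ \mathbf{h}\, L_{\mathbf{N}} \}_{\mathbf{N}} + \{ \mathbf{h}\, T_{\mathbf{N}} \}_{\mathbf{N}},
\]
where $D_{\mathbf{N}}$ is the scaled $3$-level diagonal sampling matrix produced by $\partial_t s$, $L_{\mathbf{N}}$ collects the three Darcian diffusion stencils (one per spatial direction, together with the first–order corrections coming from differentiating the arithmetic averages), and $T_{\mathbf{N}}$ carries the gravity transport along $z$. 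Under the same compatibility condition relating $\mathbf{h}$ and $\Delta t$ as in Lemma~\ref{lem:symboltime}, the constant $C=\lim_{\mathbf{N},N_T\to\infty}\mathbf{h}/\Delta t$ exists and is finite, and all three scaled blocks have uniformly bounded spectral norm.

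Next I would handle each block by the multilevel analogues of Lemmas~\ref{lem:symboltime}--\ref{lem:symboltransport}. For the diagonal block, $\mathbf{h}\,D_{\mathbf{N}}$ is the $3$-level diagonal sampling matrix of $C\rho\phi\, s'(\mathbf{p}^{(k,j)})$ on the mesh, so \textbf{GLT\,3} gives $\{\mathbf{h}\,D_{\mathbf{N}}\}_{\mathbf{N}}\sim_{\rm GLT} C\rho\phi\, s'(\mathbf{p}^{(k,j)}(\psi(\mathbf{x})))$ provided the sampled pressure field is Riemann–integrable, which holds for the Van Genuchten constitutive laws. For the diffusion block, for each direction $r\in\{1,2,3\}$ I would compare the corresponding stencil with $D_{\mathbf{N}}(K(\mathbf{p}^{(k,j)}))\,T_{\mathbf{N}}(2-2\cos\theta_r)$, i.e. the $3$-level Toeplitz factor acting only in coordinate $r$: the difference has $O(N)$ nonzero entries, each bounded by the modulus of continuity $\omega_{K(\mathbf{p}^{(k,j)}(\psi(\cdot)))}(\mathbf{h})$, so its $1$-norm and $\infty$-norm divided by $N$ vanish and the difference is zero–distributed. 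As in Remark~\ref{rmk:onlythelaplacianmatters}, the first–order $p\,K'(p)$ corrections cancel in a chain–rule fashion, and summing over $r$ through the $*$-algebra \textbf{GLT\,4} yields $\{\mathbf{h}\,L_{\mathbf{N}}\}_{\mathbf{N}}\sim_{\rm GLT} K(\mathbf{p}^{(k,j)}(\psi(\mathbf{x})))\,(8-2\cos\theta_1-2\cos\theta_2-2\cos\theta_3)$. Finally $\|\mathbf{h}\,T_{\mathbf{N}}\|\le C\,h_z\,\|K'\|_\infty\to 0$, hence $\{\mathbf{h}\,T_{\mathbf{N}}\}_{\mathbf{N}}\sim_\sigma 0$ and, by \textbf{GLT\,3}, it is a GLT sequence with symbol $0$. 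Adding the three symbols with \textbf{GLT\,4} gives $\{\mathbf{h}\,J_{\mathbf{N}}^{(k,j)}\}_{\mathbf{N}}\sim_{\rm GLT} f(\mathbf{x},\theta)$.

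To upgrade this to an eigenvalue statement I would invoke \textbf{GLT\,2}: write $\mathbf{h}\,J_{\mathbf{N}}^{(k,j)} = X_{\mathbf{N}} + Y_{\mathbf{N}}$ with $X_{\mathbf{N}}$ the Hermitian part (the diagonal block plus the symmetric Laplacian part of the diffusion block) and $Y_{\mathbf{N}}$ collecting all non-Hermitian corrections and the transport term. Then $\|X_{\mathbf{N}}\|,\|Y_{\mathbf{N}}\|$ are uniformly bounded (using $\mathbf{h}/\Delta t\to C$ and $\|s'\|_\infty,\|K\|_\infty<\infty$), and $N^{-1}\|Y_{\mathbf{N}}\|_1\to 0$ because $Y_{\mathbf{N}}$ has $O(N)$ nonzero entries, each of size $O\!\big(\omega_{K(\mathbf{p}^{(k,j)}(\psi(\cdot)))}(\mathbf{h})\big)+O(h_z)$. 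Thus \textbf{GLT\,2} gives $\{\mathbf{h}\,J_{\mathbf{N}}^{(k,j)}\}_{\mathbf{N}}\sim_\lambda f$, and since the scaling factor converges to the fixed nonzero constant absorbed into $f$ and $C$, the stated eigenvalue distribution holds for $\{J_{\mathbf{N}}^{(k,j)}\}_{\mathbf{N}}$ as well.

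I expect the main obstacle to be the two requirements on the non-Hermitian part taken together: one must check that the advection–like first–order terms generated by differentiating the arithmetic means, plus the genuine gravity term in $z$, form a sequence that is simultaneously GLT with symbol zero (so that $f$ is unchanged) and negligible in the $N^{-1}\|\cdot\|_1$ sense demanded by \textbf{GLT\,2}. This is exactly where the cancellation recorded in Remark~\ref{rmk:onlythelaplacianmatters} and the modulus-of-continuity estimates for $K(\mathbf{p}^{(k,j)}(\psi(\cdot)))$ do the real work; the remaining multilevel bookkeeping — tracking which tensor factor each Toeplitz block acts on, and the six off-diagonal bands of the $3$D stencil — is tedious but routine, exactly paralleling the one–dimensional computation in Theorem~\ref{thm:eigenvaluedistributionwitharithmeticmean}.
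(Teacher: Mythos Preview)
Your proposal is correct and follows essentially the same route as the paper: the paper's proof is extremely terse, stating only that the passage from the one--dimensional Theorem~\ref{thm:eigenvaluedistributionwitharithmeticmean} to the three--dimensional statement is ``just a technical rewriting'' in which Lemmas~\ref{lem:symboltime} and~\ref{lem:symboltransport} carry over verbatim and the decomposition of $\prescript{{\text{\tiny ARIT}}}{}{L_N}$ is rewritten using the tensor/Kronecker structure of the grid. Your three--block splitting, the per--direction comparison with $D_{\mathbf{N}}(K)\,T_{\mathbf{N}}(2-2\cos\theta_r)$, and the final appeal to \textbf{GLT\,2} simply make explicit what the paper leaves implicit; the only phrasing the paper adds and you do not is the word ``Kronecker'' for what you call ``the $3$-level Toeplitz factor acting only in coordinate~$r$''.
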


\begin{proof}[Proof of Theorem~\ref{thm:spectraldistribution_jacobians_3D_arithmetic_mean}]
	Moving from the results in
	Theorem~\ref{thm:eigenvaluedistributionwitharithmeticmean} to the
	ones in
	Theorem~\ref{thm:spectraldistribution_jacobians_3D_arithmetic_mean}
	is just a technical rewriting. The proofs in
	Lemma~\ref{lem:symboltime}, and Lemma~\ref{lem:symboltransport}
	remain the same. We need to rewrite the decomposition of
	$\prescript{{\text{\tiny ARIT}}}{}{L_N}$ exploiting the fact that
	the tensor structure in the grid corresponds to a Kronecker structure
	in the matrix.
\end{proof}

We can prove the exact same statement given in
Theorem~\ref{thm:spectraldistribution_jacobians_3D_arithmetic_mean}
also for the upstream mean together with the expression of the complete
Jacobian. By virtue of the mechanism applied, the proof is completely analogous and requires 
only some technical adjustments related to the corrections in rank. The details and full Jacobian 
expression for this case are given in the supplementary materials, see  
sections~\ref{sec:upstream_average} and~\ref{sec:jacobian-expression}.

\begin{theorem}\label{thm:spectraldescription}
	The sequence $\{ J_{\mathbf{N}}^{(k,j)} \}_\mathbf{N}$ obtained with the choice of the upstream average (entries in~\eqref{eq:jacobian_upstream_mean}) or with the arithmetic average (entries in~\eqref{eq:jacobian_arithmeticaverage}), for $K(p)$, $s(p)$ in~\eqref{eq:s(p)}, \eqref{eq:K(p)} is distributed in the sense of the eigenvalues as the function
	\begin{equation*}
	\begin{split}
	f(\mathbf{x},\theta) & = C \rho \phi s'(\mathbf{p}^{(k,j)}(\psi(\mathbf{x}))) \qquad \mathbf{x} \in [0,1]^3, \; \mathbf{\theta} \in [-\pi,\pi]^3, \\ & + K(\mathbf{p}^{(k,j)}(\psi(\mathbf{x})))(8 - 2\cos(\theta_1) - 2\cos(\theta_2) - 2\cos(\theta_3)),
	\end{split}
	\end{equation*}
	where $\psi(\mathbf{x})$ is the function mapping the cube $[0,1]^3$
	to the physical domain and $C = \lim_{\mathbf{N},N_T \rightarrow
		\infty} \frac{\mathbf{h}}{\Delta t}$.
\end{theorem}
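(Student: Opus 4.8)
The arithmetic-mean half of the assertion is nothing new: it is precisely Theorem~\ref{thm:spectraldistribution_jacobians_3D_arithmetic_mean}, so the real work is to rerun that argument for the upstream mean~\eqref{eq:upstream_mean} and verify that the limiting symbol is the very same $f$. The plan is to mirror, in three dimensions, the one-dimensional splitting $J_N = D_N + L_N + T_N$ of Section~\ref{sec:arithmetic_average}: starting from the explicit Jacobian in~\eqref{eq:jacobian_upstream_mean}, I would first isolate the diagonal contribution coming from the discrete time derivative (whose entries are the scaled samples of $\rho\phi\, s'$ in~\eqref{eq:derivativeofcoefficients}), then collect the ``Darcian'' block carrying the second differences weighted by the interface conductivities, and finally gather the gravity/transport block together with every remaining term that contains a factor $K'$. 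Using the Kronecker identification between the tensor grid and the matrix algebra — the same ``technical rewriting'' already invoked in the proof of Theorem~\ref{thm:spectraldistribution_jacobians_3D_arithmetic_mean} — the 3D discrete diffusion block is $A_{N_x}\otimes I\otimes I + I\otimes A_{N_y}\otimes I + I\otimes I\otimes A_{N_z}$ with $A_{N_\bullet}=\operatorname{tridiag}(-1,2,-1)$, which by \textbf{GLT\,3}--\textbf{GLT\,4} carries the symbol $8-2\cos\theta_1-2\cos\theta_2-2\cos\theta_3$.

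For the diagonal block, \textbf{GLT\,3} delivers at once the GLT symbol $C\rho\phi\, s'(\mathbf p^{(k,j)}(\psi(\mathbf x)))$ with $C=\lim_{\mathbf N,N_T\to\infty}\mathbf h/\Delta t$, exactly as in Lemma~\ref{lem:symboltime}. For the Darcian block the one genuinely new point is that the upstream coefficient $\prescript{\text{UP}}{}{K}$ is, along each edge, equal to $K$ evaluated at one of the two adjacent nodes; hence it differs from the node value $K(\mathbf p^{(k,j)}_{i,j,k})$ by at most the modulus of continuity $\omega_{K\circ\mathbf p\circ\psi}(\mathbf h)$, which vanishes as $\mathbf N\to\infty$ provided $\mathbf p^{(k,j)}\circ\psi$ is uniformly continuous and $K$ is continuous (true for the Van Genuchten closures~\eqref{eq:K(p)}), just as was used in Lemma~\ref{lem:symboldiffusion}. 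Therefore the upstream-weighted diffusion block equals $D_{\mathbf N}(K\circ\mathbf p^{(k,j)})$ times the 3D discrete Laplacian up to a term whose $1$- and $\infty$-norms are bounded by a constant multiple of $\omega_{K\circ\mathbf p\circ\psi}(\mathbf h)\to0$, i.e.\ a zero-distributed sequence, and by \textbf{GLT\,3}--\textbf{GLT\,4} its symbol is $K(\mathbf p^{(k,j)}(\psi(\mathbf x)))(8-2\cos\theta_1-2\cos\theta_2-2\cos\theta_3)$. Notice that, unlike the arithmetic case and Remark~\ref{rmk:onlythelaplacianmatters}, no ``chain-rule'' cancellation of $pK'$ terms is required: in the upstream differentiation those corrections carry an explicit factor $p_{i\pm1}-p_i=O(\mathbf h)$, so once the whole Jacobian is multiplied by $\mathbf h^2$ they — together with the gravity/transport block — have operator norm $O(\mathbf h)$ (each row having only a bounded number of nonzeros) and are again zero-distributed, exactly as in Lemma~\ref{lem:symboltransport}.

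It then remains to assemble the pieces. Writing $\mathbf h^2 J_{\mathbf N}^{(k,j)} = X_{\mathbf N}+Y_{\mathbf N}$ with $X_{\mathbf N}$ the Hermitian diagonal-plus-Laplacian part — whose GLT symbol, by the $*$-algebra \textbf{GLT\,4}, is the claimed $f$ — and $Y_{\mathbf N}$ the zero-distributed remainder, one checks that $\|X_{\mathbf N}\|$ and $\|Y_{\mathbf N}\|$ are uniformly bounded and $N^{-1}\|Y_{\mathbf N}\|_1\to0$; hence \textbf{GLT\,2} (equivalently, \textbf{GLT\,1} applied to the Hermitian $X_{\mathbf N}$ followed by a perturbation step) yields $\{\mathbf h^2 J_{\mathbf N}^{(k,j)}\}_{\mathbf N}\sim_\lambda f$, which is the assertion. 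The main obstacle — and the reason the proof is not a literal copy of the arithmetic one — is the non-smoothness of the upstream selector: one cannot present $\prescript{\text{UP}}{}{K}$ as a Riemann-integrable sampling, so the argument must pass through the modulus-of-continuity estimate above and a careful accounting of which terms of~\eqref{eq:jacobian_upstream_mean} are leading order and which are absorbed into $Y_{\mathbf N}$. The remaining Kronecker bookkeeping in 3D — verifying that the zero-distributed blocks stay zero-distributed once tensored with identities, and that $D_{\mathbf N}(K\circ\mathbf p^{(k,j)})$ multiplies each of the three second-difference directions in the order required by \textbf{GLT\,4} — is then purely technical.
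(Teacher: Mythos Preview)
Your proposal is correct and follows the paper's strategy closely: the same $D_N+L_N+T_N$ splitting, the same modulus-of-continuity comparison of the Darcian block with $D_{\mathbf N}(K\circ\mathbf p)\cdot T_{\mathbf N}(2-2\cos\theta_\bullet)$, the same Kronecker lift to 3D, and the same \textbf{GLT\,2} perturbation step at the end.

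There is one genuine methodological difference worth flagging. For the $K'$ corrections inside the Darcian block, the paper (Lemma~\ref{lem:symboldiffusionupstream}) does \emph{not} discard them as lower order: it decomposes $\prescript{\text{\tiny UP}}{}{L_N}$ into three tridiagonal pieces, computes a separate GLT symbol for each --- obtaining $K(p)(2-2\cos\theta)$, $pK'(p)(2-2\cos\theta)$, and $-pK'(p)(2-2\cos\theta)$ --- and then lets the last two cancel algebraically, exactly the ``chain-rule'' mechanism of Remark~\ref{rmk:onlythelaplacianmatters}. You instead observe that in the upstream Jacobian every $K'$ contribution already appears multiplied by a difference $p_{i\pm1}-p_i=O(\mathbf h)$ (this is just the product rule applied to $K[p_U](p_U-p_L)$), so after the $\mathbf h^2$ scaling those entries are $O(\mathbf h)$ and the whole block is zero-distributed without ever computing its symbol. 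Your shortcut is valid and a bit more direct; its cost is that it is specific to the upstream mean --- for the arithmetic mean the $K'$ entries are genuinely $O(1)$ and the symbol-level cancellation of Lemma~\ref{lem:symboldiffusion} is unavoidable --- whereas the paper's route treats both averages uniformly.
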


We can conclude that, independently of the mean used, the
\emph{asymptotic} behavior of the spectrum remains the same and it is dominated 
by the diffusion operator. This
suggests the idea of how to define an appropriate sequence of
preconditioners for the underlying problem. 
Specifically, we use the matrix sequence generated by 
	the considered discretization of the Jacobians of~\eqref{eq:dynamic_of_the_problem}
	having modified the flux term $q$ to be 
	\[q_{\text{prec.}} = - K(p) \nabla p .\]
	In practice, we disregard all the terms related to the fluid motion along the $\hat{z}$ axis thus using the Jacobians of the
	nonlinear diffusion operators. using the Jacobian matrices of the nonlinear diffusion operators only.
Indeed, these diffusion matrices have the same spectral distribution we have proved with Theorem~\ref{thm:spectraldescription}, and therefore by \textbf{GLT5} and \textbf{GLT4} they provide an optimal preconditioner for the underlying sequence of matrices.
We note that, to the  best of our knowledge, this is the first theoretical justification of a very 
common approach in preconditioning Newton correction linear systems for solving the Richards
equation\cite{JonesWoodward01,kollet2005,maxwell2013}.
We are now left with the problem of inverting the
asymptotically spectrally equivalent sequence we have just built from
the nonlinear diffusion. To face this task, we will perform a further
approximation of the theoretical operators sequence by employing the preconditioning library 
available in PSCToolkit, a software framework recently proposed for scalable simulations on high-end
supercomputers.\footnote{The libraries can be obtained from~\url{https://psctoolkit.github.io/}}

\section{Software framework for very large-scale simulations}\label{sec:softwareframework}

In this work we employ the recently proposed PSCToolkit software framework for parallel 
sparse computations on current petascale supercomputers. 
PSCToolkit is composed of two main libraries, named PSBLAS (Parallel
Sparse Basic Linear Algebra Subprograms)~\cite{FC2000,FB2012}, and
AMG4PSBLAS (Algebraic MultiGrid Preconditioners for
PSBLAS)~\cite{dambra2020b}. PSBLAS implements all the main computational
building blocks for iterative Krylov subspace linear solvers on parallel
computers made of multiple nodes; a plugin for NVIDIA GPUs allows the
exploitation of these devices in main sparse matrix and vector
computations on hybrid architectures. The toolkit also implements a number
of support functionalities to handle the construction of sparse matrices
and of their communication structure.

AMG4PSBLAS is a package of preconditioners leveraging on PSBLAS kernels
and providing one-level Additive Schwarz (AS) and Algebraic MultiGrid
(AMG) preconditioners for PSBLAS-based Krylov solvers.
For the  present work, we developed a new set of interfaces allowing a
seamless integration of linear solvers and preconditioners from PSBLAS and
AMG4PSBLAS into the SUNDIALS/KINSOL library~\cite{hindmarsh2005sundials},
in order to  use the KINSOL version of the modified Newton methods to
solve the discretized Richards nonlinear equations. 
Interfacing with KINSOL provides a double advantage: the extension of PSCToolkit to handle
non-linear algebraic equations, as well as the extension of the KINSOL library with new methods 
for solving sparse linear systems on high-end supercomputers.

In the following we describe the details of the proposed parallel Richards solver 
implemented in C by using PSCToolkit facilities for data generation and distribution 
and for preconditioner setup and application inside Krylov solvers, and on top of KINSOL facilities 
for the quasi-Newton procedure as described in Section~\ref{sec:inexact-newton}.

\subsection{Domain decomposition and data partitioning}\label{sec:data-distribution} 

In our parallel solution procedure of the discrete problem
in~\eqref{eq:dynamic_of_the_problem}, we employ a 2D block decomposition
of the parallelepipedal domain $\Omega$ of  size
$[0,L_x]\times[0,L_y]\times[0,L]$, i.e., we partition only in the
horizontal directions. Indeed, in realistic simulations, one often needs to
work with a wide horizontal domain, with a fixed size and resolution for
the vertical layer and the vertical physical fields. The above domain
decomposition corresponds to a mesh partitioning in which each parallel
process owns all of degrees of freedom (dofs) in the $z$ direction, while uniform partitioning
is applied in the $x$ and $y$ directions. PSBLAS provides all the
functionalities needed for very general parallel mesh partitioning and to set
up parallel data structures by compact and easy to use interfaces. All
PSBLAS routines have a pure algebraic interface, where the main data structures
are a \textit{distributed sparse matrix} and a 
corresponding \textit{communication descriptor}. The procedure for mesh 
partitioning builds a local sparse matrix of possibily non-contiguous rows of the global matrix, 
where each row corresponds to a local mesh dof and is handled through a local numbering scheme, and
automatically builds an index map object contained in the communication descriptor to keep track 
of the correspondence between local and global indices. The communication
descriptor also includes information needed for data communication
among processes which is  automatically generated  after the mesh
partitioning procedure and handled internally by the software. 

\subsection{AMG4PSBLAS preconditioners}
\label{sec:preconditioners}

The results in 
Theorem~\ref{thm:spectraldistribution_jacobians_3D_arithmetic_mean}
suggest that we can expect to achieve good convergence in the
iterative solution of the linear
systems of the quasi-Newton steps~\eqref{eq:linearsystemtosolve} by using a matrix $M$ with symbol
given by
$f(\mathbf{x},\boldsymbol{\theta})$ as a preconditioner. By means of
the $*$-algebra properties in \textbf{GLT4} and \textbf{GLT5}, this
would guarantee a sequence $\{M^{-1}
J_{\boldsymbol{\Phi}}(\mathbf{p}^{(l)}) \}_{\mathbf{N}} \sim_{\lambda} 1$.
Informally, by the spectral symbol, this implies that we can expect the
spectrum of the sequence of the preconditioned matrices to have a cluster
of eigenvalues in $1$. While this would guarantee the theoretical
properties we look for, we also need to approximate the inverse of the
symmetric and positive definite (\emph{spd} for short) preconditioner. To this aim, 
we exploit some of the methods available in AMG4PSBLAS~\cite{dambra2020b}.

AMG4PSBLAS includes iterative methods for the approximation of the inverse
of the preconditioner $M$, including domain decomposition techniques of
Additive Schwarz (AS) type and AMG methods based on aggregation of
unknowns; details on the methods and their parallel versions are available in
the literature~\cite{BDDF2007,DDF2010,dambra2020b}. In the following we describe the
main features of the preconditioners  selected for the experiments
discussed in section~\ref{sec:numerical_experiment}. 

In the AS methods, the index space, i.e., the set of row/column indices of
$M$, $\Omega^N = \{1, 2, \ldots, N \}$, is divided into $m$, possibly
overlapping, subsets $\Omega^N_i$ of size $N_i$. For each subset, we can
define the restriction operator $R_i$ which maps a vector $x \in
\mathcal{R}^N$ to the vector $x_i \in \mathcal{R}^{N_i}$ made of the
components of $x$ having indices in $\Omega^N_i$, and the corresponding
prolongation operator $P_i=(R_i)^T$. The restriction of $M$ to the
subspace $\Omega^N_i$ is then defined by the Galerkin product
$M_i=R_iMP_i$. The Additive Schwarz preconditioner for $M$ is defined as
the following matrix:
\begin{equation}
\label{AS}
M^{-1}_{AS}= \sum_{i=1}^{m} P_i (M_i)^{-1} R_i,
\end{equation}
where $M_i$ is supposed to be nonsingular and an inverse (or an
approximation of it) can be computed by an efficient algorithm. We observe that in the case of non-overlapping subsets 
$\Omega^N_i$, the AS preconditioner reduces to the well-known block-Jacobi preconditioner.

AMG4PSBLAS includes all the functionalities for setup and application of
the preconditioner in~\eqref{AS} in a parallel setting, where the original
index set $\Omega$ has been partitioned into subsets $\Omega_i$, each of
which is owned by one of the $m$ parallel processes, so that the inverse
of $M_i$ can be locally computed by a LU factorization or variants of incomplete LU 
factorizations and sparse approximate inverses. 

AS methods can also be applied as smoothers in an AMG procedure, which is
the main focus of the AMG4PSBLAS software framework. In this case, the
inverse of the preconditioner matrix is defined by the recursion below. 
Let $M_l$ be a sequence of spd coarse matrices obtained from $M$ by a
Galerkin product $M_{l+1}=(P_l)^TM_lP_l, \ l=0, \ldots, n_l-1$, with
$M_0=M$, and $P_l$ a sequence of prolongation matrices of size $N_l \times
N_{l+1}$, with $N_0=N$ and $N_{l+1}<N_l$. Let $S_l$ be a convergent
smoother with respect to the $M_l-$inner product, the well known symmetric
V-cycle, with one smoother iteration applied at each level before and
after the coarse-level correction, defines the inverse of the
preconditioner matrix as a sequence of the following matrices:
\begin{equation*}
\label{Vcycle}
\overline{M}_l =  (S_l)^{-T}+(S_l)^{-1}-(S_l)^{-T}M_l(S_l)^{-1} +
(I-(S_l)^{-T}M_l)(P_l\overline{M}_{l+1}(P_l)^T)(I-M_l(S_l)^{-1}) \ \ \  \forall l,
\end{equation*}
assuming that $\overline{M}_{n_l} \approx (M_{n_l})^{-1}$ is an
approximation of the inverse of the coarsest-level matrix.
AMG methods are characterized by the coarsening procedure applied to setup
the sequence of coarse matrices, i.e., the corresponding prolongation
matrices; they use only information from the original (fine) matrix, with no additional information 
related to the geometry of the problem.
In AMG4PSBLAS two different parallel coarsening procedures for spd matrices are available;
both employ  disjoint aggregates of fine dofs to form the coarse dofs, and
the prolongation matrices are piecewise-constant interpolation matrices
(unsmoothed aggregation) or a smoothed variant thereof (smoothed
aggregation).
For details on the coarsening procedures implemented in AMG4PSBLAS are available in
the literature~\cite{DDF2010,dambra2020b}. Here we note that in section~\ref{sec:numerical_experiment} 
we refer to the two acronims:
\begin{description}
	\item[DSVMB:] the smoothed aggregation scheme introduced by Van\v{e}k, Mandel and Brezina in~\cite{VMB1996}, and applied in a parallel setting by a decoupled approach, where each process applies the coarsening algorithm to its subset of dofs, ignoring interactions with dofs owned by other processes~\cite{DDF2010}.
	\item[SMATCH:] the smoothed aggregation scheme introduced by D'Ambra and Vassilevski~\cite{DV2013,dambra2018,dambra2020b}. It relies on a parallel coupled aggregation of dofs based on a maximum weighted graph matching algorithm, where the maximum size of aggregates can be chosen in a flexible way by a user-defined parameter so that computational complexity and convergence properties of the final preconditioner can be balanced.  
\end{description}
The parallel smoothers available in AMG4PSBLAS can be applied both as one-level preconditioners as well as in
an AMG procedure. These include variants of the AS methods described above, weighted versions of the simple
Jacobi method, such as the so-called $\ell_1-$Jacobi, and a hybrid version of  Gauss-Seidel, which acts as 
the Jacobi method among matrix blocks assigned to different parallel processes and updates the unknowns in 
a \lq\lq Gauss-Seidel style\rq\rq~within the block owned by a single process~\cite{DF2016,dambra2020b}.
Some of the above methods are well suited for use with the PSBLAS plugin for GPU accelerators in 
the preconditioner application phase.

\section{Numerical experiments}
\label{sec:numerical_experiment}

In this section we investigate the parallel performances of 
our Richards equation solver relying on the PSCToolkit software framework using preconditioners discussed 
in Section~\ref{sec:preconditioners}. The section is divided in two parts: in Section~\ref{strongscaling} 
we analyse \emph{strong scalability} by measuring how the overall computational time decreases with 
the number of processes for a fixed problem size, then in Section~\ref{sec:weak_scalability} we focus on
\emph{weak scalability}, i.e., we measure how the solution time varies by increasing the number of processes,
while keeping fixed the problem size per process, so that the global problem size proportionally increases
with the number of processes. Experiments validating the spectral analysis are reported in the
\emph{supplementary materials} sections~\ref{theoreticalresults}, and~\ref{sec:algscalability}.

All the experiments are executed on the CPU cores of the \emph{Marconi-100 supercomputer} (21\textsuperscript{st} in the June 2022 TOP500 list~\footnote{\label{foot:top500}See the relevant list 
on the website \href{https://www.top500.org/lists/top500/list/2022/06/}{www.top500.org/}}), 
with no usage of hyperthreading. \emph{Marconi}'s nodes are built on an \emph{IBM Power System AC922}, 
they contain two banks of 16 cores \emph{IBM POWER93} 3.1 GHz processors and are equipped with 256 GB of RAM.
The inter-node communication is handled by a Dual-rail \emph{Mellanox EDR Infiniband} network by \emph{IBM}
with 220/300 GB/s of nominal and peak frequency. All the code is compiled with the \texttt{gnu/8.4.0} suite
and linked against the \texttt{openmpi/4.0.3} and \texttt{openblas/0.3.9} libraries. We use only inner
functionalities of the PSBLAS 3.7.0.2 and AMG4PSBLAS 1.0 libraries, with no use of optional third party
libraries.  

We discuss results of the overall solution procedure for solving the discretized Richards equation, by
comparing parallel performance and convergence behavior when the AS preconditioner in~\eqref{AS} and the 
two AMG preconditioners based on the two different coarsening strategies discussed
in Section~\ref{sec:preconditioners} are employed for solving the linear systems within the modified 
Newton procedure. 
For the one-level $AS$ preconditioner we use one layer of mesh points in each
direction as overlap among the subdomains $\Omega_i$, each of them
assigned to different processes, and apply an Incomplete LU
factorization with no fill-in for computing the local subdomain matrix
inverses $M_i^{-1}$.
In the case of AMG preconditioners, we apply a symmetric V-cycle with $1$
iteration of hybrid backward/forward Gauss-Seidel as pre/post-smoother at
the intermediate levels. As a coarsest-level solver we use a parallel
iterative procedure based on the preconditioned Conjugate Gradient method
with block-Jacobi as preconditioner, where ILU with 1 level of fill-in is
applied on the local diagonal blocks. The coarsest-level iterative
procedure is stopped when the relative residual is less than $10^{-4}$ or
when a maximum number of $30$ iterations is reached.
In both the DSVMB and SMATCH procedures, the coarsening is stopped when
the size of the coarsest matrix includes no more than $200$ dofs per core;
in the case of the SMATCH coarsening, a maximum size of aggregates equal
to $8$ is required.
In the following, the two AMG preconditioners are referred as VDSVMB and
VSMATCH, respectively.
In order to reduce the setup costs of the AMG preconditioners in
non-linear and/or time-dependent simulations, we support a re-use strategy
of operators that  is often used in
our Richards solution approach. Namely, we compute the multilevel
hierarchy of coarser matrices only on the first Jacobian of the sequence
(at the first time step), then for subsequent Jacobians we just update the
smoother on the current approximation of the matrix, while keeping fixed
the coarser matrices and transfer operators. Moreover, at each subsequent
time step, we reuse the Jacobian (and its related approximation) from the
previous one; we leave KINSOL control over the possible need  to recompute
a Jacobian at each new Newton iteration. 

All the preconditioners are applied as right preconditioner to the PSBLAS-based Restarted GMRES 
with restarting step equal to $10$ -- GMRES(10). We stop the iterations when the relative residual 
satisfies $ \|J \mathbf{d}_r + \boldsymbol{\Phi} \| < \eta \| \boldsymbol{\Phi}\|$
with $\eta = 10^{-7}$ or when a maximum number of iterations equal to 200 is done. 

To solve the Richards equation with upstream averages on a parallelepipedal domain $\Omega$ 
of size $[0,L_x]\times[0,L_y]\times[0,L]$, we apply water at height $z=L$
such that the pressure head becomes zero in a square region at the
center of the top layer, ($\frac{a}{4} \leq x \leq \frac{3a}{4}$,
$\frac{b}{4} \leq y \leq \frac{3b}{4}$), and is fixed to the value
$h=h_r$ on all the remaining boundaries, that is
\begin{equation*}
p(x,y,L,t) = \frac{1}{\alpha} \ln \left[ \exp(\alpha h_r) + (1-\exp(\alpha h_r)) \chi_{[\frac{a}{4},\frac{3a}{4}]\times[\frac{b}{4},\frac{3b}{4}]}(x,y,z) \right],
\end{equation*}
where we denote by $\chi_\Omega$ the characteristic function of the
set $\Omega$. The associated initial condition is given by
$p(x,y,z,0) = h_r$. In all cases we run the simulation for $t \in
[0,2]$ and $N_t = 10$. This means that the number of time steps
$N_t$ is fixed independently from the number of processes $n_p$, thus
the performance analysis will be done on the quantities averaged on
the number of time steps relative to the given $n_p$.

All the timings reported in the figures of the following sections are in seconds.

\subsection{Strong scalability}
\label{strongscaling}

In this section we discuss parallel performance results of our solution
procedure when we fix the target
domain as the parallelepiped $[0,64]\times[0,64]\times[0,1]$, discretized
with $N_x=N_y=800$ mesh points in the $x$ and $y$ directions, and $N_z=40$
mesh points in the vertical direction, for a total
number of $20$ millions of dofs, on a number of computational cores from
$1$ to $256$; in particular we used a number of cores $n_p=4^p \
p=0,\ldots,4$, so that the computational domain is uniformly partitioned
in an increasing number of vertical subdomains with square basis, for
increasing number of parallel cores.

We start by looking at the average number of linear iterations done for
the Newton correction by using the three different preconditioners, and
the time needed per each linear iteration in
Figure~\ref{fig:strong_scaling_itandtimeperiter}. We observe that, as
expected, the AMG preconditioners require a smaller number of iterations
than the AS method; the latter shows an increase in the number of iterations as 
the number of cores, and therefore of subdomains, increases. VDSVMB always
requires the smallest number of iterations showing the ability of the 
DSVMB coarsening procedure to setup a good quality matrix hierarchy on the
first Jacobian.
On the other hand, we observe that when  increasing the number of cores,
the number of linear iterations also increases, due to the decoupled
parallel approach of the coarsening.
The VSMATCH algorithm, thanks to its coupled approach, produces a
number of iterations that is essentially unaffected by the number of 
processes, even though  it is always larger than VDSVMB. 

If we look at the time per linear iteration in
Figure~\ref{fig:strong_scaling_itandtimeperiter} (b), we observe, as
expected, that the AS preconditioner has the smallest time per iteration.
Indeed, for his one-level nature, its application cost is smaller than that
of the multigrid methods. It also shows a regular decreasing for
increasing number of parallel cores. If we look at the time per iteration
of the AMG preconditioners, we observe that VDSVMB and VSMATCH have very
similar behavior, and a regular decreasing for larger number of cores. On
the other hand, VDSVMB always achieves a smaller time per iteration than
VSMATCH, due to its  better coarsening ratio. Indeed, it coarsens the 
original fine matrix in an efficient way, producing coarse matrices
that are both smaller and of good quality.
This feature makes the VDSVMB preconditioner  competitive with respect
to the AS method. Both preconditioners produce similar global solution
times, as shown in Figure~\ref{fig:strong_scaling_timeandspeedup}. We can
observe that, in all the cases, the overall simulation has a
regular decreasing computational time for increasing number of cores,
leading to a global speedup ranging from $150$ to $169$, depending on the
preconditioner, on $256$ computational cores. This corresponds to
a satisfactory parallel efficiency ranging from $59 \%$ to $66 \%$.
\begin{figure}[htbp]
	\centering
	\subfloat[Average number of linear iteration for the overall simulation]{% This file was created by matlab2tikz.
%
%The latest updates can be retrieved from
%  http://www.mathworks.com/matlabcentral/fileexchange/22022-matlab2tikz-matlab2tikz
%where you can also make suggestions and rate matlab2tikz.
%
\definecolor{mycolor1}{rgb}{0.00000,0.44700,0.74100}%
\definecolor{mycolor2}{rgb}{0.85000,0.32500,0.09800}%
\definecolor{mycolor3}{rgb}{0.92900,0.69400,0.12500}%
\begin{tikzpicture}

\begin{axis}[%
width=0.25\columnwidth,
height=0.25\columnwidth,
at={(0.758in,0.481in)},
scale only axis,
xmode=log,
log basis x=2,
xmin=1,
xmax=256,
xtick={1,4,16,64,256},
xminorticks=true,
xlabel style={font=\color{white!15!black}},
xlabel={$n_p$},
ymin=60,
ymax=160,
axis background/.style={fill=white},
legend style={at={(1.00,1.00)}, legend cell align=left, align=left, draw=none, fill=none}
]
\addplot [color=mycolor1, line width=2.0pt, mark=o, mark options={solid, mycolor1}]
  table[row sep=crcr]{%
1	94.6\\
4	94.8\\
16	94.6\\
64	94.7\\
256	94.9\\
};
\addlegendentry{VSMATCH}

\addplot [color=mycolor2, line width=2.0pt, mark=x, mark options={solid, mycolor2}]
  table[row sep=crcr]{%
1	63.6\\
4	76.8\\
16	79.7\\
64	80\\
256	80.5\\
};
\addlegendentry{VDSVMB}

\addplot [color=mycolor3, line width=2.0pt, mark=triangle, mark options={solid, mycolor3}]
  table[row sep=crcr]{%
1	122.5\\
4	153.3\\
16	154.9\\
64	156.5\\
256	157.7\\
};
\addlegendentry{AS}

\end{axis}
\end{tikzpicture}%}\hfil
	\subfloat[Execution Time per linear iteration $T(s)$]{% This file was created by matlab2tikz.
%
%The latest updates can be retrieved from
%  http://www.mathworks.com/matlabcentral/fileexchange/22022-matlab2tikz-matlab2tikz
%where you can also make suggestions and rate matlab2tikz.
%
\definecolor{mycolor1}{rgb}{0.00000,0.44700,0.74100}%
\definecolor{mycolor2}{rgb}{0.85000,0.32500,0.09800}%
\definecolor{mycolor3}{rgb}{0.92900,0.69400,0.12500}%
\begin{tikzpicture}

\begin{axis}[%
width=0.25\columnwidth,
height=0.25\columnwidth,
at={(0.758in,0.481in)},
scale only axis,
xmode=log,
log basis x=2,
ymode=log,
xmin=1,
xmax=256,
xtick={1,4,16,64,256},
xminorticks=true,
xlabel style={font=\color{white!15!black}},
xlabel={$n_p$},
ymin=0.0106183636144578,
ymax=5.71802322410148,
axis background/.style={fill=white},
legend style={legend cell align=left, align=left, draw=none, fill=none}
]
\addplot [color=mycolor1, line width=2.0pt, mark=o, mark options={solid, mycolor1}]
  table[row sep=crcr]{%
1	5.71802322410148\\
4	1.51458374894515\\
16	0.495896086997886\\
64	0.124110330411827\\
256	0.0369711592202318\\
};
\addlegendentry{VSMATCH}

\addplot [color=mycolor2, line width=2.0pt, mark=asterisk, mark options={solid, mycolor2}]
  table[row sep=crcr]{%
1	4.36444878301887\\
4	1.15270752994792\\
16	0.358082429987453\\
64	0.087909846375\\
256	0.0258484281490683\\
};
\addlegendentry{VDSVMB}

\addplot [color=mycolor3, line width=2.0pt, mark=x, mark options={solid, mycolor3}]
  table[row sep=crcr]{%
1	1.75900547918367\\
4	0.460504128049576\\
16	0.156899446481601\\
64	0.0365616465367412\\
256	0.0106183636144578\\
};
\addlegendentry{AS}

\end{axis}
\end{tikzpicture}%}
	\caption{Strong scaling. Average number of linear iterations and execution time per linear iteration with different preconditioners.}
	\label{fig:strong_scaling_itandtimeperiter}
\end{figure}
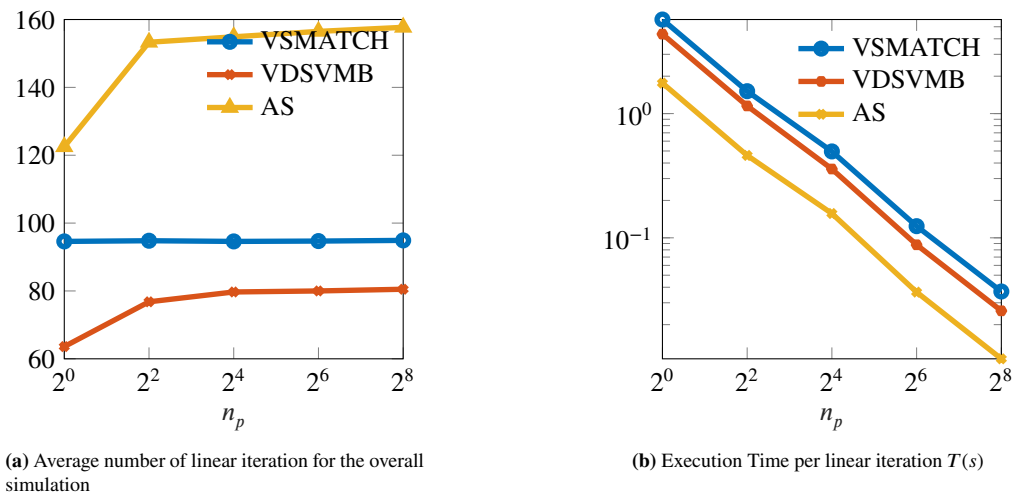
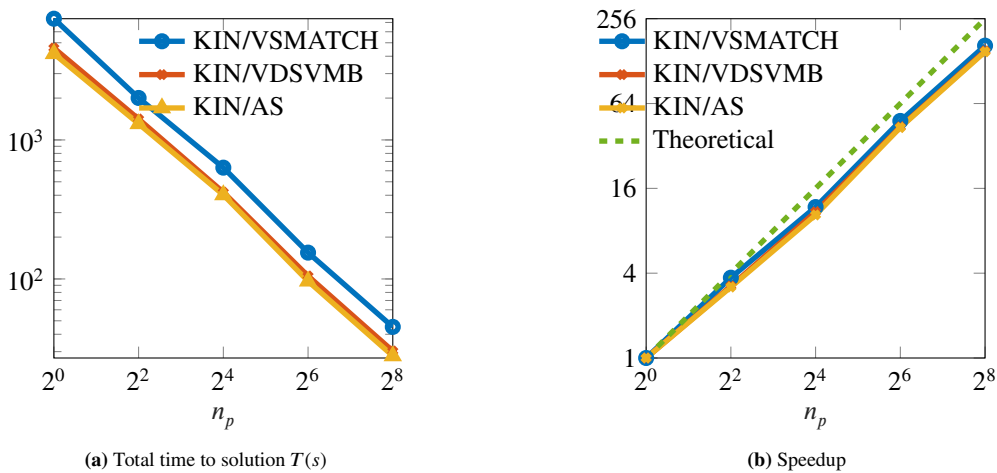
\begin{figure}[htbp]
	\centering
	\subfloat[Total time to solution $T(s)$]{% This file was created by matlab2tikz.
%
%The latest updates can be retrieved from
%  http://www.mathworks.com/matlabcentral/fileexchange/22022-matlab2tikz-matlab2tikz
%where you can also make suggestions and rate matlab2tikz.
%
\definecolor{mycolor1}{rgb}{0.00000,0.44700,0.74100}%
\definecolor{mycolor2}{rgb}{0.85000,0.32500,0.09800}%
\definecolor{mycolor3}{rgb}{0.92900,0.69400,0.12500}%
\begin{tikzpicture}

\begin{axis}[%
width=0.25\columnwidth,
height=0.25\columnwidth,
at={(0.758in,0.481in)},
scale only axis,
xmode=log,
log basis x=2,
xmin=1,
xmax=256,
xminorticks=true,
xtick={1,4,16,64,256},
xlabel style={font=\color{white!15!black}},
xlabel={$n_p$},
ymode=log,
ymin=27,
ymax=7448.468432,
yminorticks=true,
axis background/.style={fill=white},
legend style={at={(1.00,1.00)}, legend cell align=left, align=left, draw=none, fill=none}
]
\addplot [color=mycolor1, line width=2.0pt, mark=o, mark options={solid, mycolor1}]
  table[row sep=crcr]{%
1	7448.468432\\
4	2004.386082\\
16	631.861923\\
64	154.674839\\
256	45.20514\\
};
\addlegendentry{\textsc{KIN}/VSMATCH}

\addplot [color=mycolor2, line width=2.0pt, mark=x, mark options={solid, mycolor2}]
  table[row sep=crcr]{%
1	4584.770917\\
4	1413.687056\\
16	420.631367\\
64	104.982019\\
256	30.307627\\
};
\addlegendentry{\textsc{KIN}/VDSVMB}

\addplot [color=mycolor3, line width=2.0pt, mark=triangle, mark options={solid, mycolor3}]
  table[row sep=crcr]{%
1	4168.530002\\
4	1306.492317\\
16	402.325916\\
64	96.355222\\
256	27.94471\\
};
\addlegendentry{\textsc{KIN}/AS}

\end{axis}
\end{tikzpicture}%}\hfil
	\subfloat[Speedup]{% This file was created by matlab2tikz.
%
%The latest updates can be retrieved from
%  http://www.mathworks.com/matlabcentral/fileexchange/22022-matlab2tikz-matlab2tikz
%where you can also make suggestions and rate matlab2tikz.
%
\definecolor{mycolor1}{rgb}{0.00000,0.44700,0.74100}%
\definecolor{mycolor2}{rgb}{0.85000,0.32500,0.09800}%
\definecolor{mycolor3}{rgb}{0.92900,0.69400,0.12500}%
\definecolor{mycolor4}{rgb}{0.45000,0.69400,0.12500}%
\begin{tikzpicture}

\begin{axis}[%
width=0.25\columnwidth,
height=0.25\columnwidth,
at={(0.758in,0.481in)},
scale only axis,
xmode=log,
log basis x=2,
log basis y=2,
xmin=1,
xmax=256,
xminorticks=true,
xtick={1,4,16,64,256},
xlabel style={font=\color{white!15!black}},
xlabel={$n_p$},
ymode=log,
ymin=1,
ymax=256,
yminorticks=true,
ytick={1,4,16,64,256},
yticklabels={1,4,16,64,256},
axis background/.style={fill=white},
legend style={at={(0.61,1.00)}, legend cell align=left, align=left, draw=none, fill=none}
]
\addplot [color=mycolor1, line width=2.0pt, mark=o, mark options={solid, mycolor1}]
  table[row sep=crcr]{%
1	1\\
4	3.71608468991554\\
16	11.7881267423674\\
64	48.1556566029463\\
256	164.770387438243\\
};
\addlegendentry{\textsc{KIN}/VSMATCH}

\addplot [color=mycolor2, line width=2.0pt, mark=x, mark options={solid, mycolor2}]
  table[row sep=crcr]{%
1	1\\
4	3.24313001066341\\
16	10.8997361506804\\
64	43.6719636436026\\
256	151.27449328184\\
};
\addlegendentry{\textsc{KIN}/VDSVMB}

\addplot [color=mycolor3, line width=2.0pt, mark=x, mark options={solid, mycolor3}]
  table[row sep=crcr]{%
1	1\\
4	3.19062726030558\\
16	10.3610775150761\\
64	43.2621078077118\\
256	149.170630219458\\
};
\addlegendentry{\textsc{KIN}/AS}

\addplot [color=mycolor4, line width=2.0pt, dashed]
  table[row sep=crcr]{%
1	1\\
4	4\\
16	16\\
64	64\\
256	256\\
};
\addlegendentry{Theoretical}

\end{axis}
\end{tikzpicture}%}
	\caption{Strong scaling. Total execution time and Speedup for the solution of the whole problem (all time steps), \textsc{Kinsol} with different linear solvers.}
	\label{fig:strong_scaling_timeandspeedup}
\end{figure}

For the sake of completeness, in Table~\ref{tab:strong-scaling-iterations} we report the total number of computed Jacobians and of the Newton iterations required by the global simulation, when the different preconditioners are employed. We can see that the choice of the preconditioner also affects the KINSOL nonlinear procedure, and that the best behavior always corresponds to the VDSVBM preconditioner.
\begin{table}[htbp]
	\centering
	\begin{tabular}{||c||c|c||c|c||c|c||}
		\toprule
		& \multicolumn{2}{c||}{VDSVBM} & \multicolumn{2}{c||}{VSMATCH} & \multicolumn{2}{c||}{AS} \\
		\midrule
		$n_p$   & N Jac.s & NLin It.s & N Jac.s & NLin It.s & N Jac.s & NLin It.s  \\
		\midrule
		1	&3	&36	&3	&38	&3	&43\\
		4	&3	&37	&3	&38	&4	&39\\
		16	&3	&37	&3	&38	&4	&39\\
		64	&3	&37	&3	&38	&4	&39\\
		256	&3	&37	&3	&38	&4	&39\\
		\bottomrule
	\end{tabular}
	\caption{Strong scaling. Number of nonlinear iterations (NLin It.s), and number of computed Jacobians (N Jac.s) for the three preconditioners.}
	\label{tab:strong-scaling-iterations}
\end{table}

\subsection{Weak scalability}\label{sec:weak_scalability}

In this section we look at the \emph{weak
	scalability} of the overall solution procedure. 
The ideal goal  is to obtain  a constant  execution
time when the number of cores increases, while the computational work per
core is kept fixed. 
Unfortunately, it is in general  impossible to get an exactly 
constant execution  time. Indeed, we would need to have a perfect 
\emph{algorithmic} scalability of the preconditioners, i.e., 
the ability to keep the number of  linear iterations perfectly 
constant with an  increasing problem size. 
Moreover, the parallelization efficiency  of sparse matrix kernels, 
which are communication bound, is also affected by the increase in  the
number of processes and hence in the communication requirements. 

To achieve a scaling of the computational work that is meaningful with
respect to the physical properties of the underlying problem, we consider
a growing domain 
$\Omega(n_p) = [0,2^p\times 4.0]\times [0,2^q \times 4.0]\times[0,1.0]$
splitted on $n_p = p \times q$ processes for increasing $p=0,\ldots,7$,
$q=0, \ldots, 6$, and a corresponding mesh with a total of points equal to
$N(p \times q) = (2^p N_x,2^q N_y,N_z)$, where $N_x = N_y = 50$, and $N_z
= 40$. In this way, each process has the same amount of computational and
communication load, and the overall problem size increases with the total
number of processes $n_p$. We run experiments up to $8192$ CPU cores for
solving a problem with a global size up to about $829$ millions of dofs.

As in the previous section, we first discuss convergence behavior and
efficiency of the linear solvers for the Newton corrections, when the
different preconditioners are employed. In
Figure~\ref{fig:alg_scalwithupdate} we show the average number of linear
iterations along the overall simulation and the average execution time per
each linear iteration.
\begin{figure}[htbp]
	\centering
	\subfloat[Average number of linear iteration for the overall simulation]{% This file was created by matlab2tikz.
%
%The latest updates can be retrieved from
%  http://www.mathworks.com/matlabcentral/fileexchange/22022-matlab2tikz-matlab2tikz
%where you can also make suggestions and rate matlab2tikz.
%
\definecolor{mycolor1}{rgb}{0.00000,0.44700,0.74100}%
\definecolor{mycolor2}{rgb}{0.85000,0.32500,0.09800}%
\definecolor{mycolor3}{rgb}{0.92900,0.69400,0.12500}%
\definecolor{mycolor4}{rgb}{0.49400,0.18400,0.55600}%
\begin{tikzpicture}

\begin{axis}[%
width=0.25\columnwidth,
height=0.25\columnwidth,
at={(0in,0in)},
scale only axis,
xmode=log,
log basis x=2,
xmin=1,
xmax=8192,
xminorticks=true,
xlabel style={font=\color{white!15!black}},
xlabel={$n_p$},
ymin=60,
ymax=200,
axis background/.style={fill=white},
legend style={at={(1.00,1.00)}, legend cell align=left, align=left, draw=none, fill=none}
]
\addplot [color=mycolor1, mark=o, line width=2.0pt, mark options={solid, mycolor1}]
  table[row sep=crcr]{%
1	94.2\\
4	105.6\\
16	101.7\\
64	97.3\\
256	94.9\\
1024	92.5\\
4096	90.1\\
8192	88.6\\
};
\addlegendentry{VSMATCH}

\addplot [color=mycolor2, mark=x, mark options={solid, mycolor2},line width=2.0pt]
  table[row sep=crcr]{%
1	67.4\\
4	84.8\\
16	82.6\\
64	81.7\\
256	80.5\\
1024	83.7\\
4096	86.7\\
8192	83.5\\
};
\addlegendentry{VDSVMB}

\addplot [color=mycolor3, mark=triangle, mark options={solid, mycolor3},line width=2.0pt]
  table[row sep=crcr]{%
1	104.1\\
4	165\\
16	186.5\\
64	163.6\\
256	157.7\\
1024	149.5\\
4096	196.4\\
8192	202.5\\
};
\addlegendentry{AS}

% \addplot [color=mycolor4, mark=*, mark options={solid, mycolor4},line width=2.0pt]
%   table[row sep=crcr]{%
% 1	194.1\\
% 4	204.7\\
% 16	196.4\\
% 64	192.5\\
% 256	237\\
% 1024	257.3\\
% 4096	285\\
% };
% \addlegendentry{KMATCH3}

\end{axis}
\end{tikzpicture}%}\hfil
	\subfloat[Execution Time per linear iteration $T(s)$]{% This file was created by matlab2tikz.
%
%The latest updates can be retrieved from
%  http://www.mathworks.com/matlabcentral/fileexchange/22022-matlab2tikz-matlab2tikz
%where you can also make suggestions and rate matlab2tikz.
%
\definecolor{mycolor1}{rgb}{0.00000,0.44700,0.74100}%
\definecolor{mycolor2}{rgb}{0.85000,0.32500,0.09800}%
\definecolor{mycolor3}{rgb}{0.92900,0.69400,0.12500}%
\begin{tikzpicture}

\begin{axis}[%
width=0.25\columnwidth,
height=0.25\columnwidth,
at={(0in,0in)},
scale only axis,
xmode=log,
log basis x=2,
xlabel style={font=\color{white!15!black}},
xlabel={$n_p$},
xmin=1,
xmax=10000,
xminorticks=true,
ymin=0.005,
ymax=0.1,
ytick={0.1,0.01},
yticklabels={$10^{-1}$,$10^{-2}$},
ymode=log,
axis background/.style={fill=white},
legend style={at={(0.37,0.97)}, anchor=north west, legend cell align=left, align=left, draw=none, fill=none}
]
\addplot [color=mycolor1, line width=2.0pt, mark=o, mark options={solid, mycolor1}]
  table[row sep=crcr]{%
1	0.0201262717728238\\
4	0.022874977717803\\
16	0.0289177583284169\\
64	0.0314599091469681\\
256	0.0355911576817703\\
1024	0.0417667299459459\\
4096	0.052545189900111\\
8192	0.0495846750564334\\
};
\addlegendentry{VSMATCH}

\addplot [color=mycolor2, line width=2.0pt, mark=asterisk, mark options={solid, mycolor2}]
  table[row sep=crcr]{%
1	0.0168532215875371\\
4	0.0182821266627359\\
16	0.0213127797094431\\
64	0.0225124816156671\\
256	0.0269114677763975\\
1024	0.031553233572282\\
4096	0.034905747716263\\
8192	0.0350518509580838\\
};
\addlegendentry{VDSVMB}

\addplot [color=mycolor3, line width=2.0pt, mark=x, mark options={solid, mycolor3}]
  table[row sep=crcr]{%
1	0.00660769804034582\\
4	0.00713086487272727\\
16	0.00838831989812332\\
64	0.00950569971882641\\
256	0.0107557426062143\\
1024	0.0127743732173913\\
4096	0.0150891994348269\\
8192	0.0157516675604938\\
};
\addlegendentry{AS}

\end{axis}
\end{tikzpicture}%}
	
	\caption{Weak scaling. Average number of linear iterations and execution time per linear iteration with different preconditioners.}
	\label{fig:alg_scalwithupdate}
\end{figure}
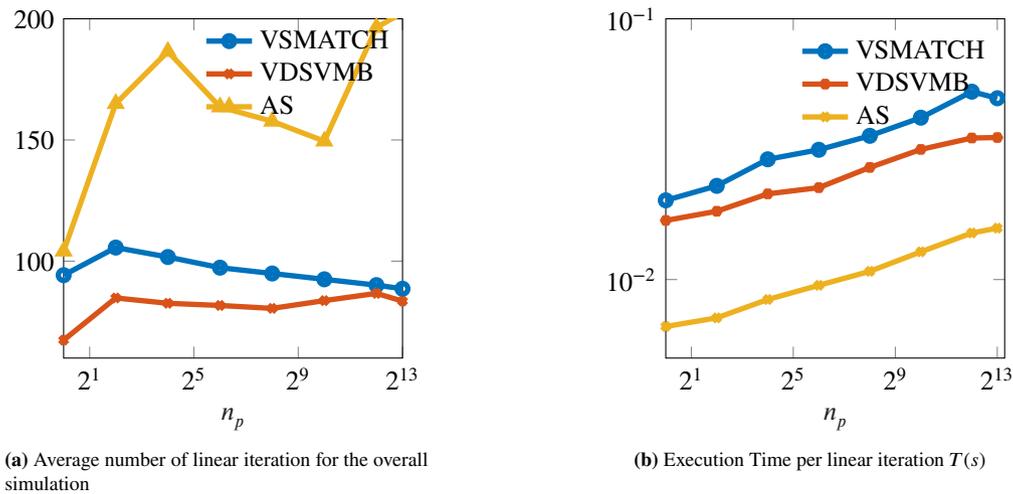
We observe that, as expected, the AS method has the worst algorithmic
scalability, showing a general increase of the linear
iterations needed to solve problems of increasing size. The average number
of iterations has a slow and small decrease from $16$ to $1024$ cores,
while there is a rapid increase from $150$ to $203$ iterations going from
$1024$ up to $8192$ cores, confirming that convergence properties of
the one-level Schwarz-type domain decomposition methods are dependent on
the number of involved subdomains. On the other hand, if we look at the
AMG preconditioners, where a better coupling among the subdomains is
considered, we see that the average number of iterations is 
smaller and has a very small variation for increasing number of 
subdomains. In particular, we observe that for the VDSVMB preconditioner
the average number of linear iterations has an increase from $67$ to $84$
iterations going from $1$ to $8192$ cores. The VSMATCH preconditioner, 
after an initial small increase, displays a decrease 
which shows that the coupled aggregation algorithm based on matching is
able to produce an effective preconditioner with very good algorithmic
scalability properties. This makes VSMATCH promising for
exploring extreme scalability in this type of simulation procedure for the
Richards model. 

If we look at the average execution time per linear iteration, for all the
preconditioners we observe a very small increase, which demonstrates a
good implementation scalability for all the sparse computations involved
in the linear solver phase. 

We now turn to the evaluation of the overall time to solution and \emph{scaled
efficiency}\footnote{Scaled efficiency is defines as $E_p=\nicefrac{T_1(N)}{T_p(N\cdot n_p)}$, where $T_1(N)$
is the time employed to solve a problem of size $N$ on 1 processing unit, and $T_p(N\cdot n_p)$ is the 
time employed for solving a problem of size $N \times n_p$ on $n_p$ processing units.}, 
of the global simulation.  
In  Figure~\ref{fig:implementationscalability}(a) we see that, as for linear
solvers, also for the global procedure 
there is a general small increase for increasing number of cores and
problem size for all the employed preconditioners, except for the VSMATCH
method going from $4096$ to $8192$ cores, where a better convergence
behavior of the linear solver also produces a small reduction in nonlinear
iterations (cfr Table~\ref{tab:weak-scaling-iterations}
in the supplementary materials). The best time to solution is in general
obtained by using the AS preconditioner till to $1024$ cores, due to the
smallest computational cost of this preconditioner. On the other hand, its
worst algorithmic scalability (i.e., the large increase in the number of
linear iterations) results in larger global execution times with respect
to the most effective VDSVMB preconditioner when the number of cores
increases. 

If we look at the scaled efficiency shown in percentage scale in Figure~\ref{fig:implementationscalability},
the AMG preconditioners show a similar behavior, while VSMATCH confirms his better efficiency when the 
largest number of cores is used.  

\begin{figure}[htbp]
	\centering
	\subfloat[Total time to solution $T (s)$]{% This file was created by matlab2tikz.
%
%The latest updates can be retrieved from
%  http://www.mathworks.com/matlabcentral/fileexchange/22022-matlab2tikz-matlab2tikz
%where you can also make suggestions and rate matlab2tikz.
%
\definecolor{mycolor1}{rgb}{0.00000,0.44700,0.74100}%
\definecolor{mycolor2}{rgb}{0.85000,0.32500,0.09800}%
\definecolor{mycolor3}{rgb}{0.92900,0.69400,0.12500}%
\definecolor{mycolor4}{rgb}{0.49400,0.18400,0.55600}%
\begin{tikzpicture}

\begin{axis}[%
width=0.25\columnwidth,
height=0.25\columnwidth,
at={(0in,0in)},
scale only axis,
xmode=log,
xmin=1,
xmax=8192,
xminorticks=true,
xlabel style={font=\color{white!15!black}},
xlabel={$n_p$},
log basis x=2,
ymin=10,
ymax=80,
axis background/.style={fill=white},
legend style={at={(0.03,0.97)}, anchor=north west, legend cell align=left, align=left, draw=none, fill=none}
]
\addplot [color=mycolor1, mark=o, line width=2.0pt, mark options={solid, mycolor1}]
  table[row sep=crcr]{%
1	25.810033\\
4	31.604952\\
16	38.398774\\
64	39.91655\\
256	45.933118\\
1024	50.159127\\
4096	62.590067\\
8192	57.762285\\
};
\addlegendentry{\textsc{KIN}/VSMATCH}

\addplot [color=mycolor2, mark=x, line width=2.0pt, mark options={solid, mycolor2}]
  table[row sep=crcr]{%
1	18.279254\\
4	22.795919\\
16	26.43832\\
64	27.132647\\
256	31.282759\\
1024	37.394473\\
4096	42.908839\\
8192	49.020841\\
};
\addlegendentry{\textsc{KIN}/VDSVMB}

\addplot [color=mycolor3, mark=triangle, line width=2.0pt, mark options={solid, mycolor3}]
  table[row sep=crcr]{%
1	13.989477\\
4	18.746585\\
16	24.181467\\
64	25.3002\\
256	27.607395\\
1024	30.956618\\
4096	43.926954\\
8192	48.401245\\
};
\addlegendentry{\textsc{KIN}/AS}

% \addplot [color=mycolor4, mark=*, line width=2.0pt, mark options={solid, mycolor4}]
%   table[row sep=crcr]{%
% 1	35.683194\\
% 4	39.784118\\
% 16	46.241313\\
% 64	48.922225\\
% 256	66.256252\\
% 1024	87.117754\\
% 4096	115.070232\\
% };
% \addlegendentry{KMATCH3}

\end{axis}
\end{tikzpicture}%}\hfil
	\subfloat[Scaled Efficiency\label{fig:total-scaledeff}]{% This file was created by matlab2tikz.
%
%The latest updates can be retrieved from
%  http://www.mathworks.com/matlabcentral/fileexchange/22022-matlab2tikz-matlab2tikz
%where you can also make suggestions and rate matlab2tikz.
%
\definecolor{mycolor1}{rgb}{0.00000,0.44700,0.74100}%
\definecolor{mycolor2}{rgb}{0.85000,0.32500,0.09800}%
\definecolor{mycolor3}{rgb}{0.92900,0.69400,0.12500}%
\definecolor{mycolor4}{rgb}{0.49400,0.18400,0.55600}%
\begin{tikzpicture}

\begin{axis}[%
width=0.25\columnwidth,
height=0.25\columnwidth,
at={(0in,0in)},
scale only axis,
xmode=log,
log basis x=2,
xlabel style={font=\color{white!15!black}},
xlabel={$n_p$},
ylabel={$E_p$},
xmin=1,
xmax=8192,
xminorticks=true,
ymin=25,
ymax=100,
axis background/.style={fill=white},
legend style={at={(0.37,0.97)}, anchor=north west, legend cell align=left, align=left, draw=none, fill=none}
]
\addplot [color=mycolor1, mark=o, line width=2.0pt, mark options={solid, mycolor1}]
  table[row sep=crcr]{%
1	100\\
4	81.6645220660357\\
16	67.215773607772\\
64	64.6599793819857\\
256	56.1904658856383\\
1024	51.4563042534612\\
4096	41.2366278502306\\
8192	44.6831925018202\\
};
\addlegendentry{\textsc{KIN}/VSMATCH}

\addplot [color=mycolor2, mark=x, line width=2.0pt, mark options={solid, mycolor2}]
  table[row sep=crcr]{%
1	100\\
4	80.1865193502398\\
16	69.1392418277712\\
64	67.3699620976899\\
256	58.432358859396\\
1024	48.8822345484051\\
4096	42.6002064516357\\
8192	37.2887401095383\\
};
\addlegendentry{\textsc{KIN}/VDSVMB}

\addplot [color=mycolor3, mark=triangle, line width=2.0pt, mark options={solid, mycolor3}]
  table[row sep=crcr]{%
1	100\\
4	74.6241355425535\\
16	57.8520608365076\\
64	55.2939383878388\\
256	50.6729338280559\\
1024	45.1905857416337\\
4096	31.8471364984697\\
8192	28.903134619781\\
};
\addlegendentry{\textsc{KIN}/AS}

% \addplot [color=mycolor4, mark=x, line width=2.0pt, mark options={solid, mycolor4}]
%   table[row sep=crcr]{%
% 1	100\\
% 4	89.6920575190331\\
% 16	77.1673460050756\\
% 64	72.9386163446164\\
% 256	53.8563424927809\\
% 1024	40.9597267624691\\
% 4096	31.0099261814298\\
% };
% \addlegendentry{KMATCH3}

\end{axis}
\end{tikzpicture}%}
	\caption{Weak scaling: Total execution time and Scaled efficiency shown in percentage for the solution of the whole problem (all time steps), \textsc{Kinsol} with different linear solvers.}
	\label{fig:implementationscalability}
\end{figure}
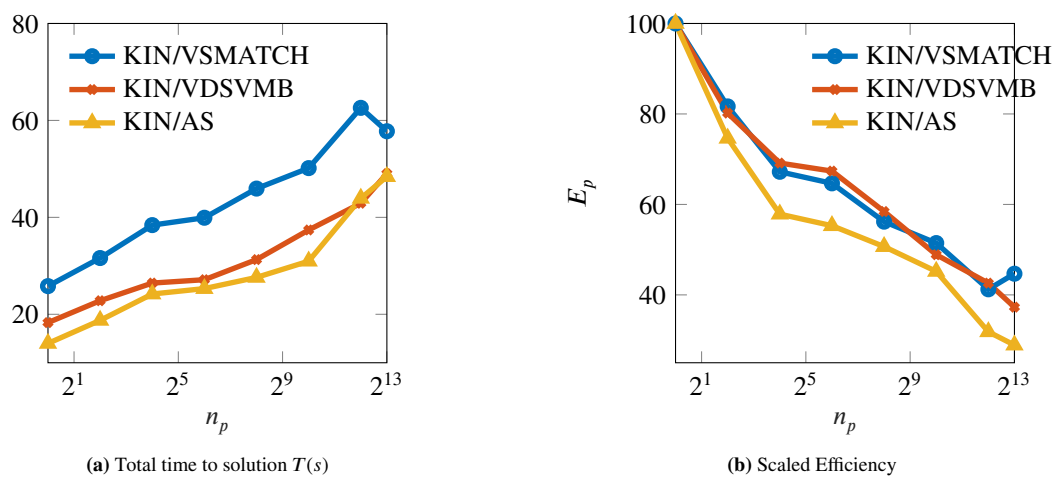

To give a complete picture of the performance of the various parts of the solution procedure we also 
report the percentage of the execution time spent in the different computational kernels. The barplots 
in Figure~\ref{fig:timespentpercentage} deliver several useful information.
\begin{figure}[htbp]
	\centering
	\subfloat[VDSVMB]{% This file was created by matlab2tikz.
%
%The latest updates can be retrieved from
%  http://www.mathworks.com/matlabcentral/fileexchange/22022-matlab2tikz-matlab2tikz
%where you can also make suggestions and rate matlab2tikz.
%
\definecolor{mycolor1}{rgb}{0.00000,0.44700,0.74100}%
\definecolor{mycolor2}{rgb}{0.85000,0.32500,0.09800}%
\definecolor{mycolor3}{rgb}{0.92900,0.69400,0.12500}%
\definecolor{mycolor4}{rgb}{0.49400,0.18400,0.55600}%
\definecolor{mycolor5}{rgb}{0.46600,0.67400,0.18800}%
\definecolor{mycolor6}{rgb}{0.30100,0.74500,0.93300}%
\definecolor{mycolor7}{rgb}{0.63500,0.07800,0.18400}%
\begin{tikzpicture}

\begin{axis}[%
width=0.25\columnwidth,
height=0.25\columnwidth,
at={(0.758in,0.348in)},
scale only axis,
bar width=12,
xmin=-0.8,
xmax=14.8,
xtick={0,2,4,6,8,10,12,14},
xticklabels={ $2^0$,  $2^2$,  $2^4$,  $2^6$,  $2^8$, $2^{10}$, $2^{12}$, $2^{13}$},
ymin=0,
ymax=1,
legend columns = 3,
axis background/.style={fill=white},
legend style={at={(0.5,1.1)}, anchor=south, legend cell align=left, align=left, draw=none}
]
\addplot[ybar stacked, fill=mycolor1, draw=black, area legend] table[row sep=crcr] {%
0	8.40833001171711e-06\\
2	0.000189436846130222\\
4	0.000311732704649917\\
6	0.000494932470097739\\
8	0.00108125472564616\\
10	0.0014162273125229\\
12	0.00124785848901668\\
14	0.00117906736072452\\
};
\addplot[forget plot, color=white!15!black] table[row sep=crcr] {%
-0.4	0\\
15.4	0\\
};
%\addlegendentry{Halo}

\addplot[ybar stacked, fill=mycolor2, draw=black, area legend] table[row sep=crcr] {%
0	0.249967161132506\\
2	0.208600918874997\\
4	0.212455980561548\\
6	0.207162474785449\\
8	0.196135877912814\\
10	0.191092769244268\\
12	0.195785497715284\\
14	0.18991797467019\\
};
\addplot[forget plot, color=white!15!black] table[row sep=crcr] {%
-0.4	0\\
15.4	0\\
};
%\addlegendentry{Feval}

\addplot[ybar stacked, fill=mycolor3, draw=black, area legend] table[row sep=crcr] {%
0	0.0514195491785387\\
2	0.0429805001500488\\
4	0.0468626977810996\\
6	0.0428954093568534\\
8	0.0383276615723057\\
10	0.0331911884411367\\
12	0.0300287313763022\\
14	0.034545570142084\\
};
\addplot[forget plot, color=white!15!black] table[row sep=crcr] {%
-0.4	0\\
15.4	0\\
};
%\addlegendentry{Jacobian}

\addplot[ybar stacked, fill=mycolor4, draw=black, area legend] table[row sep=crcr] {%
0	0.0467053524175549\\
2	0.0393409451928654\\
4	0.0433329727456208\\
6	0.0372574043365544\\
8	0.0360266177289542\\
10	0.0301908787429629\\
12	0.026875278541095\\
14	0.0342358914892665\\
};
\addplot[forget plot, color=white!15!black] table[row sep=crcr] {%
-0.4	0\\
15.4	0\\
};
%\addlegendentry{Auxiliary}

\addplot[ybar stacked, fill=mycolor5, draw=black, area legend] table[row sep=crcr] {%
0	0.0216051929690347\\
2	0.0202683607535191\\
4	0.0213389542149426\\
6	0.0230699938712209\\
8	0.021651105006435\\
10	0.0219093233377029\\
12	0.0204203171285991\\
14	0.0247051489086589\\
};
\addplot[forget plot, color=white!15!black] table[row sep=crcr] {%
-0.4	0\\
15.4	0\\
};
%\addlegendentry{Setup}

\addplot[ybar stacked, fill=mycolor6, draw=black, area legend] table[row sep=crcr] {%
0	0.621418759759014\\
2	0.680088546112135\\
4	0.66586515482073\\
6	0.677880690372745\\
8	0.692513456373845\\
10	0.706255614298937\\
12	0.705292521897411\\
14	0.696422780723401\\
};
\addplot[forget plot, color=white!15!black] table[row sep=crcr] {%
-0.4	0\\
15.4	0\\
};
%\addlegendentry{Linear Solve}

\addplot[ybar stacked, fill=mycolor7, draw=black, area legend] table[row sep=crcr] {%
0	0.00887557621333977\\
2	0.00853129207030449\\
4	0.00983250717140874\\
6	0.0112390948070789\\
8	0.0142640266799998\\
10	0.0159439986224699\\
12	0.0203497948522912\\
14	0.0189935667056743\\
};
\addplot[forget plot, color=white!15!black] table[row sep=crcr] {%
-0.4	0\\
15.4	0\\
};
%\addlegendentry{KINSol Overhead}

\end{axis}
\end{tikzpicture}%}
	\subfloat[VSMATCH]{% This file was created by matlab2tikz.
%
%The latest updates can be retrieved from
%  http://www.mathworks.com/matlabcentral/fileexchange/22022-matlab2tikz-matlab2tikz
%where you can also make suggestions and rate matlab2tikz.
%
\definecolor{mycolor1}{rgb}{0.00000,0.44700,0.74100}%
\definecolor{mycolor2}{rgb}{0.85000,0.32500,0.09800}%
\definecolor{mycolor3}{rgb}{0.92900,0.69400,0.12500}%
\definecolor{mycolor4}{rgb}{0.49400,0.18400,0.55600}%
\definecolor{mycolor5}{rgb}{0.46600,0.67400,0.18800}%
\definecolor{mycolor6}{rgb}{0.30100,0.74500,0.93300}%
\definecolor{mycolor7}{rgb}{0.63500,0.07800,0.18400}%
\begin{tikzpicture}

\begin{axis}[%
width=0.25\columnwidth,
height=0.25\columnwidth,
at={(0.758in,0.348in)},
scale only axis,
bar width=12,
xmin=-0.8,
xmax=14.8,
xtick={0,2,4,6,8,10,12,14},
xticklabels={ $2^0$,  $2^2$,  $2^4$,  $2^6$,  $2^8$, $2^{10}$, $2^{12}$, $2^{13}$},
ymin=0,
ymax=1,
legend columns = 3,
axis background/.style={fill=white},
legend style={at={(0.5,1.1)}, anchor=south, legend cell align=left, align=left, draw=none}
]
\addplot[ybar stacked, fill=mycolor1, draw=black, area legend] table[row sep=crcr] {%
0	6.24388973078802e-06\\
2	0.000129604531593657\\
4	0.000222757164069874\\
6	0.000292925991850498\\
8	0.000462202737444202\\
10	0.000556133124087267\\
12	0.000588324917434583\\
14	0.00102353620532844\\
};
\addplot[forget plot, color=white!15!black] table[row sep=crcr] {%
-0.4	0\\
15.4	0\\
};
%\addlegendentry{Halo}

\addplot[ybar stacked, fill=mycolor2, draw=black, area legend] table[row sep=crcr] {%
0	0.169403489333005\\
2	0.146921956090932\\
4	0.144946961067038\\
6	0.140822082569761\\
8	0.141737590811387\\
10	0.135043550498796\\
12	0.141527611721521\\
14	0.138449335929145\\
};
\addplot[forget plot, color=white!15!black] table[row sep=crcr] {%
-0.4	0\\
15.4	0\\
};
%\addlegendentry{Feval}

\addplot[ybar stacked, fill=mycolor3, draw=black, area legend] table[row sep=crcr] {%
0	0.0361348627489163\\
2	0.0299030987295915\\
4	0.028770553976541\\
6	0.0283101119710997\\
8	0.0267109219140013\\
10	0.024917578808738\\
12	0.0283304697532917\\
14	0.0241178651433197\\
};
\addplot[forget plot, color=white!15!black] table[row sep=crcr] {%
-0.4	0\\
15.4	0\\
};
%\addlegendentry{Jacobian}

\addplot[ybar stacked, fill=mycolor4, draw=black, area legend] table[row sep=crcr] {%
0	0.0326647393283069\\
2	0.0274529763563634\\
4	0.0255207106351885\\
6	0.0260073077457846\\
8	0.0250653620970203\\
10	0.0228381167798235\\
12	0.0265830998391486\\
14	0.0231846264392068\\
};
\addplot[forget plot, color=white!15!black] table[row sep=crcr] {%
-0.4	0\\
15.4	0\\
};
%\addlegendentry{Auxiliary}

\addplot[ybar stacked, fill=mycolor5, draw=black, area legend] table[row sep=crcr] {%
0	0.0211171988815357\\
2	0.0253064266637709\\
4	0.027538814129847\\
6	0.0300484510810679\\
8	0.0297963528109837\\
10	0.0337277042321729\\
12	0.0328658683174121\\
14	0.0362212505963017\\
};
\addplot[forget plot, color=white!15!black] table[row sep=crcr] {%
-0.4	0\\
15.4	0\\
};
%\addlegendentry{Setup}

\addplot[ybar stacked, fill=mycolor6, draw=black, area legend] table[row sep=crcr] {%
0	0.734557294444374\\
2	0.764309860998998\\
4	0.765893208465458\\
6	0.766862156173316\\
8	0.766425937974655\\
10	0.770233206012537\\
12	0.756401428680369\\
14	0.760565862309637\\
};
\addplot[forget plot, color=white!15!black] table[row sep=crcr] {%
-0.4	0\\
15.4	0\\
};
%\addlegendentry{Linear Solve}

\addplot[ybar stacked, fill=mycolor7, draw=black, area legend] table[row sep=crcr] {%
0	0.0061161713741319\\
2	0.00597607662875116\\
4	0.00710699456185773\\
6	0.00765696446711961\\
8	0.00980163165450851\\
10	0.0126837105438457\\
12	0.0137031967708227\\
14	0.0164375233770616\\
};
\addplot[forget plot, color=white!15!black] table[row sep=crcr] {%
-0.4	0\\
15.4	0\\
};
%\addlegendentry{KINSol Overhead}

\end{axis}
\end{tikzpicture}%}
	\subfloat[AS]{% This file was created by matlab2tikz.
%
%The latest updates can be retrieved from
%  http://www.mathworks.com/matlabcentral/fileexchange/22022-matlab2tikz-matlab2tikz
%where you can also make suggestions and rate matlab2tikz.
%
\definecolor{mycolor1}{rgb}{0.00000,0.44700,0.74100}%
\definecolor{mycolor2}{rgb}{0.85000,0.32500,0.09800}%
\definecolor{mycolor3}{rgb}{0.92900,0.69400,0.12500}%
\definecolor{mycolor4}{rgb}{0.49400,0.18400,0.55600}%
\definecolor{mycolor5}{rgb}{0.46600,0.67400,0.18800}%
\definecolor{mycolor6}{rgb}{0.30100,0.74500,0.93300}%
\definecolor{mycolor7}{rgb}{0.63500,0.07800,0.18400}%
\begin{tikzpicture}

\begin{axis}[%
width=0.25\columnwidth,
height=0.25\columnwidth,
at={(0.758in,0.348in)},
scale only axis,
bar width=12,
xmin=-0.8,
xmax=14.8,
xtick={0,2,4,6,8,10,12,14},
xticklabels={ $2^0$,  $2^2$,  $2^4$,  $2^6$,  $2^8$, $2^{10}$, $2^{12}$,$2^{13}$},
ymin=0,
ymax=1,
axis background/.style={fill=white},
legend style={at={(1.25,0.1)}, anchor=south, legend cell align=left, align=left, draw=none, fill=none}
]
\addplot[ybar stacked, fill=mycolor1, draw=black, area legend] table[row sep=crcr] {%
0	1.17138045975557e-05\\
2	0.000201716312597734\\
4	0.000356392728365074\\
6	0.00045530248772737\\
8	0.000830661639752682\\
10	0.00114878808789772\\
12	0.00139945892902112\\
14	0.00153374482412591\\
};
\addplot[forget plot, color=white!15!black] table[row sep=crcr] {%
-0.4	0\\
15.4	0\\
};
\addlegendentry{Halo}

\addplot[ybar stacked, fill=mycolor2, draw=black, area legend] table[row sep=crcr] {%
0	0.33901319970718\\
2	0.237917470835355\\
4	0.228738091034758\\
6	0.228243227326266\\
8	0.225720054355002\\
10	0.225560579647299\\
12	0.199556502369821\\
14	0.195205579939111\\
};
\addplot[forget plot, color=white!15!black] table[row sep=crcr] {%
-0.4	0\\
15.4	0\\
};
\addlegendentry{Feval}

\addplot[ybar stacked, fill=mycolor3, draw=black, area legend] table[row sep=crcr] {%
0	0.0659066811432622\\
2	0.0505794522042281\\
4	0.0455652669873172\\
6	0.0598597244290559\\
8	0.0557113773320518\\
10	0.0515149296993619\\
12	0.0415660507669164\\
14	0.0493687920630967\\
};
\addplot[forget plot, color=white!15!black] table[row sep=crcr] {%
-0.4	0\\
15.4	0\\
};
\addlegendentry{Jacobian}

\addplot[ybar stacked, fill=mycolor4, draw=black, area legend] table[row sep=crcr] {%
0	0.0595308888245072\\
2	0.046363964423387\\
4	0.0398650751834039\\
6	0.0544281863384479\\
8	0.052041418612658\\
10	0.0472894681195472\\
12	0.0367687912073302\\
14	0.0459636317206303\\
};
\addplot[forget plot, color=white!15!black] table[row sep=crcr] {%
-0.4	0\\
15.4	0\\
};
\addlegendentry{Auxiliary}

\addplot[ybar stacked, fill=mycolor5, draw=black, area legend] table[row sep=crcr] {%
0	0.0315767916127243\\
2	0.0277588958202254\\
4	0.0277679182987533\\
6	0.029963616888404\\
8	0.0376751700042688\\
10	0.0371249566086321\\
12	0.0288236967216074\\
14	0.0321728232403939\\
};
\addplot[forget plot, color=white!15!black] table[row sep=crcr] {%
-0.4	0\\
15.4	0\\
};
\addlegendentry{Setup}

\addplot[ybar stacked, fill=mycolor6, draw=black, area legend] table[row sep=crcr] {%
0	0.491699129281245\\
2	0.627630421220718\\
4	0.646950683761246\\
6	0.614672008126418\\
8	0.614393574257912\\
10	0.616917777000059\\
12	0.674646998970154\\
14	0.659014593736173\\
};
\addplot[forget plot, color=white!15!black] table[row sep=crcr] {%
-0.4	0\\
15.4	0\\
};
\addlegendentry{LinSol}

\addplot[ybar stacked, fill=mycolor7, draw=black, area legend] table[row sep=crcr] {%
0	0.0122615956264842\\
2	0.00954807918348843\\
4	0.0107565720061568\\
6	0.0123779344036808\\
8	0.0136277437983554\\
10	0.0204435008372038\\
12	0.0172385010351505\\
14	0.0167408344764685\\
};
\addplot[forget plot, color=white!15!black] table[row sep=crcr] {%
-0.4	0\\
15.4	0\\
};
\addlegendentry{Overhead}

\end{axis}
\end{tikzpicture}%}
	\caption{Percentage of the overall time spent in the different phases for the overall solution process. We sum all the partial times and normalize it against the total to account for the different computational kernels.}
	\label{fig:timespentpercentage}
\end{figure}
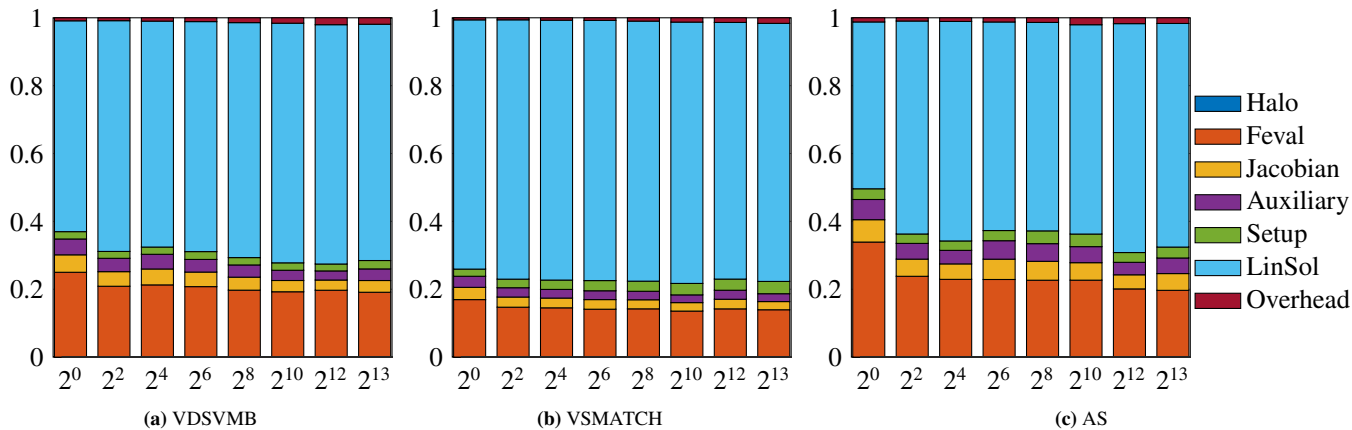
First of all, as expected, the two main parts of the efforts are
represented by the time spent in actually solving the linear
systems~\eqref{eq:linearsystemtosolve} (LinSol) and evaluating the
non-linear function $\boldsymbol{\Phi}$ encoding the discretization
(Feval), on which our implementation efforts were more focused. In
particular, in the Feval kernel we exploited many of the
support functionalities of PSCToolkit. To increase the efficiency, at the
expense of a marginal increase in the use of memory, we experimented with
memorizing the \emph{local-to-global} map on the MPI tasks, thus avoiding
the overhead of explicit calls to the functions that permit the user to 
explore the index space. This is made possible by the fact that we are
using a communicator for data distribution which is fixed through the time
steps and iterates of Newton's method; see the discussion in
Section~\ref{sec:data-distribution}. Thanks to the flexibility of the
underlying software framework, we were able to optimize the procedure
for building the vectors and matrices at each nonlinear time step, 
making good reuse of local data.

We have a small and almost equal time that is spent in building the
distributed Jacobian (Jacobian) and auxiliary matrices (Auxiliary). The
rebuild of the local portions of the matrices is well optimized and is
very close to the absolute minimum that is necessary to compute the
coefficient values.
A reasonably small amount of time is spent in building and updating the
preconditioners (Setup). In particular, it is possible to reuse and apply
the aggregation hierarchy, so that we only need to recompute the
smoothers. 
The other (almost invisible) bar is represented
by the halo exchanges: this is the data exchange among the parallel
processes that happens before each build of the Jacobian, build of auxiliary
matrices and function evaluation. It is a communication that 
is necessary for having an agreement on all the quantities needed by the
processes to perform their computations, namely the exchange of the
boundary data; this operation is persistent, in the sense that the pattern of the data exchange 
is determined by the discretization mesh structure.
The last overhead is all the remaining computations that
are made inside the Newton method, i.e., vector updates and vector norm
computations, namely the Newton direction updates, convergence checks and
line-searches, and are in charge to the KINSOL library.

\section{Conclusions and perspectives}
\label{sec:conclusions}

In this paper we focused on two main objectives: to prove some spectral
properties of the sequence of Jacobian matrices generated discretizing 
the Richards equation in mixed form for simulation of unsaturated
subsurface flows by a Newton-type method, and to prove the
efficiency, flexibility and robustness of a software framework for
parallel sparse matrix computations.

The theoretical results we obtained are consistent
with expectations and justify some preconditioner choices already used in
the literature for solving Richards equations. 
We used several functionalities of the PSCToolkit for iterative
solution of sparse linear systems, some of the support routines for
re-using preconditioners in solving sequences of linear
systems and for an efficient setup and update of data structures needed
for Jacobian and right-hand side computations in Newton iterations. The
performances of our strategy on one of the
most powerful supercomputers currently available are quite promising in view of exploring extreme scalability and confirm the benefits of using multigrid
preconditioners when number of processing cores largely increases.

Our plans for future work include the extension of the PSCToolkit
interface to KINSOL, in order to use the ability of the PSCToolkit linear
solvers in exploiting GPU architectures, and the integration of the
software stack into the Parflow code~\cite{PARFLOW-MANUAL} for realistic
simulations in hydrological applications.

\section*{Acknowledgments}
We thank Prof. Stefan Kollet from the J\"{u}lich Supercomputing Center for the information regarding the
implementation of the Richards equation in \textsc{ParFlow} and the insightful discussions regarding its
discretization.
We acknowledge PRACE (Partnership for Advanced Computing in Europe) for awarding us access to the high-performance computing system Marconi100 at CINECA (Consorzio Interuniversitario per il Calcolo Automatico dell’Italia Nord-orientale) under the grants Pra22$\_$EoCoE, Pra23$\_$EoCoE and Pra24$\_$EoCoE.

\subsection*{Financial disclosure}

This work has been partially supported by EU under the Horizon 2020 Project "Energy oriented Centre of
Excellence: toward exascale for energy (EoCoE-II), Project ID: 824158.

\subsection*{Conflict of interest}

The authors declare no potential conflict of interests.

\bibliography{richardsbibliography.bib}%

\begin{thebibliography}{10}

\bibitem{Richards-review}
Farthing MW, and Ogden FL.
\newblock Numerical Solution of {R}ichards' Equation: A Review of Advances and
  Challenges.
\newblock Soil Sci Soc Am J. 2017;{\bf 81}(6):1257--1269.
\newblock Available from:
  \url{https://acsess.onlinelibrary.wiley.com/doi/abs/10.2136/sssaj2017.02.0058}.

\bibitem{Genuchten80}
van Genuchten MT.
\newblock A Closed-form Equation for Predicting the Hydraulic Conductivity of
  Unsaturated Soils.
\newblock Soil Sci Soc Am J. 1980;{\bf 44}(5):892--898.
\newblock Available from:
  \url{https://acsess.onlinelibrary.wiley.com/doi/abs/10.2136/sssaj1980.03615995004400050002x}.

\bibitem{CELIA90}
Celia MA, Bouloutas ET, and Zarba RL.
\newblock A general mass-conservative numerical solution for the unsaturated
  flow equation.
\newblock Water Resources Research. 1990;{\bf 26}(7):1483--1496.
\newblock Available from:
  \url{https://agupubs.onlinelibrary.wiley.com/doi/abs/10.1029/WR026i007p01483}.

\bibitem{Miller-Kelley98}
Miller CT, Williams GA, Kelley CT, and Tocci MD.
\newblock Robust solution of {R}ichards' equation for nonuniform porous media.
\newblock Water Resources Research. 1998;{\bf 34}(10):2599--2610.
\newblock Available from:
  \url{https://agupubs.onlinelibrary.wiley.com/doi/abs/10.1029/98WR01673}.

\bibitem{JonesWoodward01}
Jones JE, and Woodward CS.
\newblock {N}ewton–{K}rylov-multigrid solvers for large-scale, highly
  heterogeneous, variably saturated flow problems.
\newblock Advances in Water Resources. 2001;{\bf 24}(7):763--774.
\newblock Available from:
  \url{http://www.sciencedirect.com/science/article/pii/S0309170800000750}.

\bibitem{hindmarsh2005sundials}
Hindmarsh AC, Brown PN, Grant KE, Lee SL, Serban R, Shumaker DE, et~al.
\newblock {SUNDIALS}: Suite of nonlinear and differential/algebraic equation
  solvers.
\newblock ACM Transactions on Mathematical Software (TOMS). 2005;{\bf
  31}(3):363--396.

\bibitem{dambra2020b}
D'Ambra P, Durastante F, and Filippone S.
\newblock {AMG} preconditioners for Linear Solvers at Extreme Scale.
\newblock SIAM J on Sci Comp. 2021;{\bf 43}(5).

\bibitem{brooks1964hydraulic}
Brooks RH, and Corey AT.
\newblock Hydraulic properties of porous media.
\newblock Hydrology papers (Colorado State University); no 3. 1964;.

\bibitem{doi:10.1029/2008WR007654}
Szymkiewicz A.
\newblock Approximation of internodal conductivities in numerical simulation of
  one-dimensional infiltration, drainage, and capillary rise in unsaturated
  soils.
\newblock Water Resources Research. 2009;{\bf 45}(10).
\newblock Available from:
  \url{https://agupubs.onlinelibrary.wiley.com/doi/abs/10.1029/2008WR007654}.

\bibitem{doi:10.1061/(ASCE)1084-0699(2006)11:6(526)}
Baker DL.
\newblock General Validity of Conductivity Means in Unsaturated Flow Models.
\newblock Journal of Hydrologic Engineering. 2006;{\bf 11}(6):526--538.

\bibitem{doi:10.1029/WR015i001p00181}
Haverkamp R, and Vauclin M.
\newblock A note on estimating finite difference interblock hydraulic
  conductivity values for transient unsaturated flow problems.
\newblock Water Resources Research. 1979;{\bf 15}(1):181--187.
\newblock Available from:
  \url{https://agupubs.onlinelibrary.wiley.com/doi/abs/10.1029/WR015i001p00181}.

\bibitem{doi:10.1029/92WR02875}
Oldenburg CM, and Pruess K.
\newblock On numerical modeling of capillary barriers.
\newblock Water Resources Research. 1993;{\bf 29}(4):1045--1056.
\newblock Available from:
  \url{https://agupubs.onlinelibrary.wiley.com/doi/abs/10.1029/92WR02875}.

\bibitem{Srivastava1995}
Srivastava R, and Guzman-Guzman A.
\newblock Analysis of hydraulic conductivity averaging schemes for
  one-dimensional steady-state unsaturated flow.
\newblock Ground Water. 1995 2020/9/29/;{\bf 33}:946+.
\newblock 6.

\bibitem{Garoni2017}
Garoni C, and Serra-Capizzano S.
\newblock {Generalized Locally Toeplitz Sequences: Theory and Applications:
  Volume I}.
\newblock Cham: Springer International Publishing; 2017.
\newblock Available from: \url{https://doi.org/10.1007/978-3-319-53679-8}.

\bibitem{Garoni2018}
Garoni C, and Serra-Capizzano S.
\newblock Generalized locally {T}oeplitz sequences: theory and applications.
  {V}ol. {II}.
\newblock Springer, Cham; 2018.
\newblock Available from: \url{https://doi.org/10.1007/978-3-030-02233-4}.

\bibitem{kollet2005}
Kollet SJ, and Maxwell RM.
\newblock Integrated Surface-groundwater Flow Modeling: a Free-surface Overland
  Flow Boundary Condition in a Parallel Groundwater Flow Model.
\newblock Advances in Water Resources. 2005 4;{\bf 29}(7).
\newblock Available from: \url{https://www.osti.gov/biblio/887270}.

\bibitem{maxwell2013}
{Maxwell} RM.
\newblock {A terrain-following grid transform and preconditioner for parallel,
  large-scale, integrated hydrologic modeling}.
\newblock Advances in Water Resources. 2013 Mar;{\bf 53}:109--117.

\bibitem{FC2000}
Filippone S, and Colajanni M.
\newblock {PSBLAS}: a library for parallel linear algebra computations on
  sparse matrices.
\newblock ACM TOMS. 2000 Dec;{\bf 26}(4):527--550.

\bibitem{FB2012}
Filippone S, and Buttari A.
\newblock Object-Oriented Techniques for Sparse Matrix Computations in
  {F}ortran 2003.
\newblock ACM TOMS. 2012;{\bf 38}(4):23:1--23:20.

\bibitem{BDDF2007}
Buttari A, D'Ambra P, di~Serafino D, and Filippone S.
\newblock 2{LEV}-{D}2{P}4: a package of high-performance preconditioners for
  scientific and engineering applications.
\newblock Appl Algebra Engrg Comm Comput. 2007;{\bf 18}(3):223--239.

\bibitem{DDF2010}
D'Ambra P, di~Serafino D, and Filippone S.
\newblock M{LD}2{P}4: a package of parallel algebraic multilevel domain
  decomposition preconditioners in {F}ortran 95.
\newblock ACM Trans Math Software. 2010;{\bf 37}(3):Art. 30, 23.

\bibitem{VMB1996}
Van\v{e}k P, Mandel J, and Brezina M.
\newblock Algebraic multigrid by smoothed aggregation for second and fourth
  order elliptic problems.
\newblock Computing. 1996;{\bf 56}(3):179--196.
\newblock International GAMM-Workshop on Multi-level Methods (Meisdorf, 1994).

\bibitem{DV2013}
D'Ambra P, and Vassilevski PS.
\newblock Adaptive {AMG} with coarsening based on compatible weighted matching.
\newblock Comput Vis Sci. 2013;{\bf 16}(2):59--76.

\bibitem{dambra2018}
D'Ambra P, Filippone S, and Vassilevski PS.
\newblock Boot{CM}atch: A Software Package for Bootstrap AMG Based on Graph
  Weighted Matching.
\newblock ACM Trans Math Softw. 2018 Jun;{\bf 44}(4).
\newblock Available from: \url{https://doi.org/10.1145/3190647}.

\bibitem{DF2016}
D'Ambra P, and Filippone S.
\newblock A parallel generalized relaxation method for high-performance image
  segmentation on GPUs.
\newblock J Comput Appl Math. 2016;{\bf 293}:35--44.

\bibitem{PARFLOW-MANUAL}
Maxwell RM, Condon LE, Smith SG, Woodward CS, Falgout RD, Ferguson IM, et~al.;
  Integrated GroundWater Modeling~Center Report.
\newblock \textsc{ParFlow} User’s Manual.
\newblock Zenodo. 2022;p. 171.
\newblock Available from:
  \url{https://github.com/parflow/parflow/blob/master/parflow-manual.pdf}.

\bibitem{doi:10.2136/sssaj1977.03615995004100020024x}
Haverkamp R, Vauclin M, Touma J, Wierenga PJ, and Vachaud G.
\newblock A Comparison of Numerical Simulation Models For One-Dimensional
  Infiltration.
\newblock Soil Science Society of America Journal. 1977;{\bf 41}(2):285--294.
\newblock Available from:
  \url{https://acsess.onlinelibrary.wiley.com/doi/abs/10.2136/sssaj1977.03615995004100020024x}.

\end{thebibliography}

\appendix

\section{Supplementary materials}

Here we gather further details regarding both theoretical analysis and numerical experiments. This material can be used to illustrate in greater detail the analyzes described in the paper.

We start by reporting here the expression of the the Jacobian matrix for arithmetic average~\ref{sec:jacobian_arithmetic}. Then we report the
theoretical analysis for the upstream average formulation in Section~\ref{sec:upstream_average}, this complements the analysis made for the arithmetic average in Section~\ref{sec:arithmetic_average} and, a part of some technical refinements, permits to state and conclude the analysis concerning the spectrum in Theorem~\ref{thm:spectraldescription}. For the sake of completion we also report the whole closed formulation of the 3D Jacobian in Section~\ref{sec:jacobian-expression}. To validate the spectral analysis also in a numerical way we show in Section~\ref{theoreticalresults} the comparison between the spectral distribution and the computed eigenvalues for a 1D problem and also some weak scalability results to compare the behaviour of the AS preconditioner, also applicable to unsymmetric matrices, on the original Jabobians sequence with respect to the spectrally equivalent sequence.
Section~\ref{sec:algscalability} discusses the algorithmic scalability when the theory is applied as is. That is, we show that the multigrid preconditioners built on the spectral approximation gives a number of iteration that remains almost independent of the discretization size.

Finally, Section~\ref{sec:details-on-build-and-solve} gives further details on the numerical results for the weak scaling analysis in Section~\ref{sec:weak_scalability}.

\section{The Jacobian matrix for arithmetic average}\label{sec:jacobian_arithmetic}

We give here the complete expression of the Jacobian with respect to the arithmetic average.

\begingroup
\allowdisplaybreaks
\begin{align*}
\frac{\partial \boldsymbol{\Phi}}{\partial p_{i,j,k}^{(l)}} = &  \frac{\rho  \phi  s'\left(p_{i,j,k}^{(l)}\right)}{\Delta t} + \frac{K\left[p_{i-1,j,k}^{(l)}\right]+K\left[p_{i+1,j,k}^{(l)}\right]+2 K\left[p_{i,j,k}^{(l)}\right]}{2 h_x^2} + \frac{K\left[p_{i,j-1,k}^{(l)}\right]+K\left[p_{i,j+1,k}^{(l)}\right]+2 K\left[p_{i,j,k}^{(l)}\right]}{2 h_y^2} \\
& + \frac{K\left[p_{i,j,k-1}^{(l)}\right]+K\left[p_{i,j,k+1}^{(l)}\right]+2 K\left[p_{i,j,k}^{(l)}\right]}{2 h_z^2} +p_{i,j,k}^{(l)} \left(\frac{K'\left(p_{i,j,k}^{(l)}\right)}{h_x^2}+\frac{K'\left(p_{i,j,k}^{(l)}\right)}{h_y^2}+\frac{K'\left(p_{i,j,k}^{(l)}\right)}{h_z^2}\right) \\
& -\frac{p_{i,j-1,k}^{(l)} K'\left[p_{i,j,k}^{(l)}\right]}{2 h_y^2}-\frac{p_{i,j,k-1}^{(l)} K'\left[p_{i,j,k}^{(l)}\right]}{2 h_z^2}-\frac{p_{i,j,k+1}^{(l)} K'\left[p_{i,j,k}^{(l)}\right]}{2 h_z^2} \\
& -\frac{p_{i-1,j,k}^{(l)} K'\left[p_{i,j,k}^{(l)}\right]}{2 h_x^2}-\frac{p_{i+1,j,k}^{(l)} K'\left[p_{i,j,k}^{(l)}\right]}{2 h_x^2}-\frac{p_{i,j+1,k}^{(l)} K'\left[p_{i,j,k}^{(l)}\right]}{2 h_y^2} ,\stepcounter{equation}\\
\frac{\partial \boldsymbol{\Phi}}{\partial p_{i-1,j,k}^{(l)}} = & -\frac{K\left[p_{i-1,j,k}^{(l)}\right]+K\left[p_{i,j,k}^{(l)}\right]}{2 h_x^2}-\frac{p_{i-1,j,k}^{(l)} K'\left[p_{i-1,j,k}^{(l)}\right]}{2 h_x^2} +\frac{p_{i,j,k}^{(l)} K'\left[p_{i-1,j,k}^{(l)}\right]}{2 h_x^2},\tag{\theequation}\label{eq:jacobian_arithmeticaverage} \\
\frac{\partial \boldsymbol{\Phi}}{\partial p_{i+1,j,k}^{(l)}} = & -\frac{K\left[p_{i+1,j,k}^{(l)}\right]+K\left[p_{i,j,k}^{(l)}\right]}{2 h_x^2}-\frac{p_{i+1,j,k}^{(l)} K'\left[p_{i+1,j,k}^{(l)}\right]}{2 h_x^2} +\frac{p_{i,j,k}^{(l)} K'\left[p_{i+1,j,k}^{(l)}\right]}{2 h_x^2}, \\
\frac{\partial \boldsymbol{\Phi}}{\partial p_{i,j-1,k}^{(l)}} = & -\frac{K\left[p_{i,j-1,k}^{(l)}\right]+K\left[p_{i,j,k}^{(l)}\right]}{2 h_y^2}-\frac{p_{i,j-1,k}^{(l)} K'\left[p_{i,j-1,k}^{(l)}\right]}{2 h_y^2} +\frac{p_{i,j,k}^{(l)} K'\left[p_{i,j-1,k}^{(l)}\right]}{2 h_y^2},\\
\frac{\partial \boldsymbol{\Phi}}{\partial p_{i,j+1,k}^{(l)}} = & -\frac{K\left[p_{i,j+1,k}^{(l)}\right]+K\left[p_{i,j,k}^{(l)}\right]}{2 h_y^2}-\frac{p_{i,j+1,k}^{(l)} K'\left[p_{i,j+1,k}^{(l)}\right]}{2 h_y^2} +\frac{p_{i,j,k}^{(l)} K'\left[p_{i,j+1,k}^{(l)}\right]}{2 h_y^2}, \\
\frac{\partial \boldsymbol{\Phi}}{\partial p_{i,j,k-1}^{(l)}} = & -\frac{K\left[p_{i,j,k-1}^{(l)}\right]+K\left[p_{i,j,k}^{(l)}\right]-h_z K'\left[p_{i,j,k-1}^{(l)}\right]}{2 h_z^2} -\frac{p_{i,j,k-1}^{(l)} K'\left[p_{i,j,k-1}^{(l)}\right]}{2 h_z^2}+\frac{p_{i,j,k}^{(l)} K'\left[p_{i,j,k-1}^{(l)}\right]}{2 h_z^2}, \\
\frac{\partial \boldsymbol{\Phi}}{\partial p_{i,j,k+1}^{(l)}} = & -\frac{K\left[p_{i,j,k+1}^{(l)}\right]+K\left[p_{i,j,k}^{(l)}\right]+h_z K'\left[p_{i,j,k+1}^{(l)}\right]}{2 h_z^2}  -\frac{p_{i,j,k+1}^{(l)} K'\left[p_{i,j,k+1}^{(l)}\right]}{2 h_z^2}+\frac{p_{i,j,k}^{(l)} K'\left[p_{i,j,k+1}^{(l)}\right]}{2 h_z^2}.
\end{align*}
\endgroup

\section{Spectral analysis for the upstream average formulation}
\label{sec:upstream_average}

We consider here the case in which $\prescript{\text{\scriptsize AV}}{}{K_{L,U}}$ is given by the upstream-weighted mean of the values of $K$ in~\eqref{eq:K(p)} at the two side of the interface. The machinery we use to obtain the result for this case is mostly identical to the arithmetic average case discussed in Section~\ref{sec:arithmetic_average}. There is only a technical complication given by the form of the coefficients,
\begin{equation*}
\begin{split}
\prescript{\text{UP}}{}{K^{(l+1)}_{i+1,i}} =  \begin{cases}
K[p^{(l+1)}_{i+1}], & p^{(l+1)}_{i+1} -  p^{(l+1)}_{i} \geq 0,\\
K[p^{(l+1)}_{i}], & p^{(l+1)}_{i+1} -  p^{(l+1)}_{i} < 0,\\
\end{cases} \qquad
\prescript{\text{UP}}{}{K^{(l+1)}_{i,i-1}} =  \begin{cases}
K[p^{(l+1)}_{i}], & p^{(l+1)}_{i} -  p^{(l+1)}_{i-1} \geq 0,\\
K[p^{(l+1)}_{i-1}], & p^{(l+1)}_{i} -  p^{(l+1)}_{i-1} < 0.\\
\end{cases}
\end{split}
\end{equation*}
With this choice equation~\eqref{eq:dynamic_of_the_problem} becomes
\begin{equation*}
\begin{split}
\prescript{{\text{\tiny UP}}}{}{\Phi}(p^{(l+1)}_i) = & \frac{S\left(p_i^{(l+1)}\right)-S\left(p_i^{(l)}\right)}{\Delta t}  -\frac{1}{{h_z}^2}\left[  \left(p_{i+1}^{(l+1)}-p_i^{(l+1)}\right)\begin{cases}
K[p^{(l+1)}_{i+1}], & p^{(l+1)}_{i+1} -  p^{(l+1)}_{i} \geq 0,\\
K[p^{(l+1)}_{i}], & p^{(l+1)}_{i+1} -  p^{(l+1)}_{i} < 0,\\
\end{cases}\right.\\&\left.-\left(p_i^{(l+1)}-p_{i-1}^{(l+1)}\right)\begin{cases}
K[p^{(l+1)}_{i}], & p^{(l+1)}_{i} -  p^{(l+1)}_{i-1} \geq 0,\\
K[p^{(l+1)}_{i-1}], & p^{(l+1)}_{i} -  p^{(l+1)}_{i-1} < 0.\\
\end{cases}\right]  -\frac{1}{2 h_z}\left( K\left[p_{i+1}^{(l+1)}\right]-K\left[p_{i-1}^{(l+1)}\right] \right).
\end{split}
\end{equation*}
The Jacobian matrix is then again a tridiagonal matrix with entries
\begin{equation}\label{eq:jacobian_upstream_mean}
\prescript{{\text{\tiny UP}}}{}{J_N} = \operatorname{tridiag}(\eta_i,\zeta_i,\xi_i) = \begin{bmatrix}
\zeta_1 & \xi_1 \\
\eta_1 & \ddots & \ddots \\
& \ddots &  \ddots & \xi_{N-3} \\
& & \eta_{N-3} & \zeta_{N-2}
\end{bmatrix},
\end{equation}
where,
\begingroup
\allowdisplaybreaks
\begin{align*}
\zeta_i = & \frac{s'\left(p_i^{(l+1)}\right)}{\Delta t} - \frac{p_{i-1}^{(l+1)}}{h_z^2} \begin{cases}
K'\left[p_i^{(l+1)}\right], & p_i^{(l+1)}-p_{i-1}^{(l+1)} \geq 0 \\
0, & p_i^{(l+1)}-p_{i-1}^{(l+1)} < 0
\end{cases}   - \frac{p_{i+1}^{(l+1)}}{h_z^2} \begin{cases}
0, & p_{i+1}^{(l+1)}-p_i^{(l+1)} \geq 0\\
K'\left[p_i^{(l+1)}\right], & p_{i+1}^{(l+1)}-p_i^{(l+1)} < 0 \\
\end{cases} \\ & + \frac{p_i^{(l+1)}}{h_z^2} \left(\begin{cases}
K'\left[p_i^{(l+1)}\right], & p_i^{(l+1)}-p_{i-1}^{(l+1)} \geq 0\\
0, & p_i^{(l+1)}-p_{i-1}^{(l+1)} < 0\\
\end{cases} + \begin{cases}
0, & p_{i+1}^{(l+1)}-p_i^{(l+1)} \geq 0\\
K'\left[p_i^{(l+1)}\right], & p_{i+1}^{(l+1)}-p_i^{(l+1)}< 0
\end{cases}\right) \\
& + \frac{1}{h_z^2} \left(\begin{cases}
K\left[p_i^{(l+1)}\right], & p_i^{(l+1)}-p_{i-1}^{(l+1)} \geq 0 \\
K\left[p_{i-1}^{(l+1)}\right], & p_i^{(l+1)}-p_{i-1}^{(l+1)} < 0
\end{cases}  + \begin{cases}
K\left[p_{i+1}^{(l+1)}\right], & p_{i+1}^{(l+1)}-p_i^{(l+1)} \geq 0 \\
K\left[p_i^{(l+1)}\right], & p_{i+1}^{(l+1)}-p_i^{(l+1)} < 0
\end{cases} \right), \\
\xi_i = & -\frac{K'\left(p_{i+1}^{(l+1)}\right)}{2 h_z}  - \frac{1}{h_z^2} \begin{cases}
K\left[p_{i+1}^{(l+1)}\right],& p_{i+1}^{(l+1)}-p_i^{(l+1)}\geq 0\\
K\left[p_i^{(l+1)}\right], & p_{i+1}^{(l+1)}-p_i^{(l+1)} < 0
\end{cases} + \frac{p_i^{(l+1)}}{h_z^2} \begin{cases}
K'\left[p_{i+1}^{(l+1)}\right], & p_{i+1}^{(l+1)}-p_i^{(l+1)} \geq 0\\
0, & p_{i+1}^{(l+1)}-p_i^{(l+1)} < 0
\end{cases} \\
& - \frac{p_{i+1}^{(l+1)}}{h_z^2} \begin{cases}
K'\left[p_{i+1}^{(l+1)}\right], & p_{i+1}^{(l+1)}-p_i^{(l+1)} \geq 0 \\
0, & p_{i+1}^{(l+1)}-p_i^{(l+1)}< 0
\end{cases},
\\
\eta_i = & \frac{K'\left(p_{i-1}^{(l+1)}\right)}{2 h_z} - \frac{1}{h_z^2} \begin{cases} K\left[p_i^{(l+1)}\right], & p_i^{(l+1)}-p_{i-1}^{(l+1)} \geq 0\\
K\left[p_{i-1}^{(l+1)}\right], & p_i^{(l+1)}-p_{i-1}^{(l+1)} < 0
\end{cases} + \frac{p_i^{(l+1)}}{h_z^2} \begin{cases}
0, & p_i^{(l+1)}-p_{i-1}^{(l+1)} \geq 0\\
K'\left[p_{i-1}^{(l+1)}\right] & p_i^{(l+1)}-p_{i-1}^{(l+1)} < 0
\end{cases} \\
& - \frac{p_{i-1}^{(l+1)}}{h_z^2} \begin{cases}
0, & p_i^{(l+1)}-p_{i-1}^{(l+1)} \geq 0 \\
K'\left[p_{i-1}^{(l+1)}\right], & p_i^{(l+1)}-p_{i-1}^{(l+1)} < 0\\
\end{cases}.
\end{align*}
\endgroup
To perform the spectral analysis we need to first rewrite the Jacobians as the sum of matrices that are more amenable to be treated, and use then the $*$-algebra and perturbation properties to obtain the spectral information on the different Jacobian expressions. As for the previous case, the three parts are one relative to the time-stepping, one relative to the Darcian diffusion, and the last one relative to the transport along the $z$-axis, i.e.,
\begin{equation*}
\{ \prescript{{\text{\tiny UP}}}{}{J_N} \}_N = \{ \prescript{{\text{\tiny UP}}}{}{D_N} \}_{N} + \{ \prescript{{\text{\tiny UP}}}{}{L_N} \}_N + \{ \prescript{{\text{\tiny UP}}}{}{T_N} \},
\end{equation*}
where $\{ \prescript{{\text{\tiny UP}}}{}{D_N} \}_{N}$ is the scaled
diagonal matrix sampling the function $s'(p)$ on $p$ approximated at
the $(i+1)$th time step that is unchanged from
$\prescript{{\text{\tiny ARIT}}}{}{D_N}$
in~\eqref{eq:arithmeticdiagonalsampling}. This means that $\{
\prescript{{\text{\tiny UP}}}{}{D_N} \}_{N} =
\prescript{{\text{\tiny ARIT}}}{}{D_N}$ has a spectral distribution
given again by Lemma~\ref{lem:symboltime}. In the same way, the
matrix $\prescript{{\text{\tiny UP}}}{}{T_N}$ is unchanged from
$\prescript{{\text{\tiny ARIT}}}{}{T_N}$
in~\eqref{eq:arithmetictransportterm}, i.e.,
$\prescript{{\text{\tiny UP}}}{}{T_N} = \prescript{{\text{\tiny
			ARIT}}}{}{T_N}$, whose spectra distribution is given in
Lemma~\ref{lem:symboltransport}. The only part for which we
encounter a difference is the one relative to the Darcian diffusion.
To shorten the notation and make the subsequent spectra proof easier
we pose
\begin{equation*}
{\Delta p}_{i}^{i+1} = p_{i+1}^{(l+1)}-p_i^{(l+1)}, \text{ and } {\Delta p}_{i-1}^{i} = p_i^{(l+1)}-p_{i-1}^{(l+1)},
\end{equation*}
then the decomposition reads as
\begingroup
\allowdisplaybreaks
\begin{align*}
\prescript{{\text{\tiny UP}}}{}{L_N} = & \frac{1}{h_z^2} \operatorname{tridiag}\left(- \begin{cases} K\left[p_i^{(l+1)}\right], & {\Delta p}_{i-1}^{i} \geq 0\\
K\left[p_{i-1}^{(l+1)}\right], & {\Delta p}_{i-1}^{i} < 0
\end{cases},\right. \\ & \left. \left( \begin{cases}
K\left[p_{i+1}^{(l+1)}\right], & {\Delta p}_{i}^{i+1} \geq 0 \\
K\left[p_i^{(l+1)}\right], & {\Delta p}_{i}^{i+1} < 0
\end{cases} +  \begin{cases}
K\left[p_i^{(l+1)}\right], & {\Delta p}_{i-1}^{i} \geq 0 \\
K\left[p_{i-1}^{(l+1)}\right], & {\Delta p}_{i-1}^{i} < 0
\end{cases} \right), \right. \\ &\left. -\begin{cases} K\left[p_i^{(l+1)}\right], & {\Delta p}_{i-1}^{i} \geq 0\\
K\left[p_{i-1}^{(l+1)}\right], & {\Delta p}_{i-1}^{i} < 0
\end{cases} \right) \\ & + \frac{1}{h_z^2} \operatorname{tridiag}\left( - p_{i-1}^{(l+1)} \begin{cases}
0, & {\Delta p}_{i-1}^{i} \geq 0 \\
K'\left[p_{i-1}^{(l+1)}\right], & {\Delta p}_{i-1}^{i} < 0\\
\end{cases} ,\right. \\ & \left. p_i^{(l+1)} \left(\begin{cases}
K'\left[p_i^{(l+1)}\right], & {\Delta p}_{i-1}^{i} \geq 0\\
0, & {\Delta p}_{i-1}^{i} < 0\\
\end{cases} + \begin{cases}
0, & {\Delta p}_{i}^{i+1} \geq 0\\
K'\left[p_i^{(l+1)}\right], & {\Delta p}_{i}^{i+1}< 0
\end{cases}\right), \right.\\&\left. - p_{i+1}^{(l+1)}\begin{cases}
K'\left[p_{i+1}^{(l+1)}\right], & {\Delta p}_{i}^{i+1} \geq 0 \\
0, & {\Delta p}_{i}^{i+1}< 0
\end{cases} \right) \\
& -\frac{1}{h^2_z} \operatorname{tridiag}\left( - p_i^{(l+1)} \begin{cases}
0, & {\Delta p}_{i-1}^{i} \geq 0\\
K'\left[p_{i-1}^{(l+1)}\right] & {\Delta p}_{i-1}^{i} < 0
\end{cases} , \right.\\&\left. p_{i-1}^{(l+1)} \begin{cases}
K'\left[p_i^{(l+1)}\right], & {\Delta p}_{i-1}^{i} \geq 0 \\
0, & {\Delta p}_{i-1}^{i} < 0
\end{cases}  + p_{i+1}^{(l+1)} \begin{cases}
0, & {\Delta p}_{i}^{i+1} \geq 0\\
K'\left[p_i^{(l+1)}\right], & {\Delta p}_{i}^{i+1} < 0 \\
\end{cases},\right. \\ & \left.- p_i^{(l+1)} \begin{cases}
K'\left[p_{i+1}^{(l+1)}\right], & {\Delta p}_{i}^{i+1} \geq 0\\
0, & {\Delta p}_{i}^{i+1} < 0
\end{cases}\right).
\end{align*}
\endgroup

\begin{lemma}\label{lem:symboldiffusionupstream}
	The matrix sequence $\{h_z^2\, \prescript{{\text{\tiny
				UP}}}{}{L_N}\}_N$ has GLT symbol
	\[ K(p(\psi(x))) (2-2\cos(\theta)),\]
	where $\psi(x)$ is the function mapping $[0,1]$ to the domain of
	definition for the variable $z$.
\end{lemma}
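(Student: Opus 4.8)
The plan is to follow verbatim the strategy of the arithmetic case, Lemma~\ref{lem:symboldiffusion}, exploiting the three–block decomposition of $\prescript{{\text{\tiny UP}}}{}{L_N}$ written out just above the lemma: one tridiagonal block carrying the conductivity values $K[\cdot]$, and two tridiagonal blocks carrying the lower–order corrections of type $p\,K'[\cdot]$. For each block I would compare the matrix with a product of a diagonal sampling matrix and a (real, symmetric or not) Toeplitz matrix, show that the comparison error is a zero–distributed sequence, and then reassemble the pieces by the $*$–algebra property \textbf{GLT4}.

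First I would treat the conductivity block
\[
B_N = \frac{1}{h_z^2}\operatorname{tridiag}\!\left(-\,\prescript{\text{UP}}{}{K^{(l+1)}_{i,i-1}},\ \prescript{\text{UP}}{}{K^{(l+1)}_{i,i-1}}+\prescript{\text{UP}}{}{K^{(l+1)}_{i+1,i}},\ -\,\prescript{\text{UP}}{}{K^{(l+1)}_{i+1,i}}\right)_{i=1}^{N-2}.
\]
The key observation is that, whatever the signs of ${\Delta p}_{i-1}^{i}$ and ${\Delta p}_{i}^{i+1}$, the upstream rule selects $K$ evaluated at one of two grid nodes at distance at most $h_z$ from the node $\psi(i/N)$; hence every selected value differs from $K(p(\psi(x)))$ by at most the modulus of continuity $\omega_{K(p(z))}(h_z)$, which vanishes as $h_z\to0$ by the regularity of $p$ and $K$. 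Comparing $B_N$ entrywise with $D_N(K(p^{(l+1)}))\,T_N(2-2\cos\theta)$, the nonzero entries of the difference sit only on the three diagonals and are bounded in modulus by a fixed multiple of $\omega_{K(p(z))}(h_z)$; being tridiagonal, the difference has both $1$–norm and $\infty$–norm of order $\omega_{K(p(z))}(h_z)\to0$, so it is zero–distributed. By \textbf{GLT3} and \textbf{GLT4} this gives $\{h_z^2 B_N\}_N\sim_{\rm GLT}K(p(\psi(x)))(2-2\cos\theta)$.

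Next I would run the same diagonal–sampling–times–Toeplitz comparison, carried out branch by branch, on the two correction blocks. Collecting the branch–dependent coefficients and using the continuity of $p$ and $K'$ exactly as above, each of these blocks is, up to a zero–distributed perturbation, a GLT sequence with symbol $p(\psi(x))\,K'(p(\psi(x)))$ times a trigonometric polynomial in $e^{\pm i\theta}$; the $*$–algebra of \textbf{GLT4} then lets me add the three symbols. As already noted for the arithmetic mean in Remark~\ref{rmk:onlythelaplacianmatters}, the trigonometric factors multiplying $p\,K'$ cancel in a chain–rule fashion, so that only $K(p(\psi(x)))(2-2\cos\theta)$ survives, which is exactly the claimed symbol.

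The main obstacle is the bookkeeping forced by the case structure of the upstream rule: one has to check that \emph{every} one of the finitely many sign patterns of the pressure increments produces the same limiting model, i.e.\ that all branch–dependent tridiagonal matrices lie in the same zero–distributed neighbourhood of $D_N(K(p^{(l+1)}))T_N(2-2\cos\theta)$, and that the isolated indices where the sign of ${\Delta p}_{i-1}^{i}$ or ${\Delta p}_{i}^{i+1}$ flips contribute only low–rank, hence zero–distributed, corrections. Tracking the branch–dependent weights in the $p\,K'$ blocks is the delicate point, but once this is done the cancellation closes exactly as in the arithmetic case and the proof concludes.
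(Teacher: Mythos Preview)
Your proposal is correct and follows essentially the same route as the paper: compare each of the three tridiagonal blocks with $D_N(K(p^{(l+1)}))T_N(2-2\cos\theta)$ (respectively with the corresponding $p\,K'$ models), bound the entrywise difference by a constant multiple of the modulus of continuity $\omega_{K(p(z))}(h_z)$, and then invoke \textbf{GLT3}--\textbf{GLT4} so that the $p\,K'$ contributions cancel exactly as in the arithmetic case. The only cosmetic difference is that the paper spells out the four sign patterns of $({\Delta p}_{i-1}^{i},{\Delta p}_{i}^{i+1})$ explicitly row by row, whereas you argue once and for all that the upstream rule always selects a node within $h_z$ of the centre; both lead to the same $\omega$-bound and the same conclusion.
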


\begin{proof}
	The proof is similar to the one for Lemma~\ref{lem:symboldiffusion}.
	We need to consider that the entries in the rows of the matrices are
	evaluation of the $K(p(z))$ and $p(x)K'(p(z))$ functions on
	different stencils depending on the flux between the interfaces of
	the solution vector $\mathbf{p}^{(k,l+1)}$.
	
	Let us start from the first tridiagonal matrix
	\begin{equation*}
	\begin{split}
	A_N =  & \operatorname{tridiag}\left(- \begin{cases} K\left[p_i^{(l+1)}\right], & {\Delta p}_{i-1}^{i} \geq 0\\
	K\left[p_{i-1}^{(l+1)}\right], & {\Delta p}_{i-1}^{i} < 0
	\end{cases}, \right. \\ & \left. \left( \begin{cases}
	K\left[p_{i+1}^{(l+1)}\right], & {\Delta p}_{i}^{i+1} \geq 0 \\
	K\left[p_i^{(l+1)}\right], & {\Delta p}_{i}^{i+1} < 0
	\end{cases} +  \begin{cases}
	K\left[p_i^{(l+1)}\right], & {\Delta p}_{i-1}^{i} \geq 0 \\
	K\left[p_{i-1}^{(l+1)}\right], & {\Delta p}_{i-1}^{i} < 0
	\end{cases} \right), \right. \\ &\left. -\begin{cases} K\left[p_i^{(l+1)}\right], & {\Delta p}_{i-1}^{i} \geq 0\\
	K\left[p_{i-1}^{(l+1)}\right], & {\Delta p}_{i-1}^{i} < 0
	\end{cases} \right),
	\end{split}
	\end{equation*}
	to produce its GLT symbol, we can consider the matrix sequence
	$D_N(K(p^{(l+1)}))$ $T_N(2-2\cos\theta)$. If we then perform a
	direct comparison of
	$$
	Z_N = A_N -\frac{1}{2}D_N(K(p^{(l+1)})) T_N(2-2\cos\theta),
	$$
	we observe that four different occurrences are possible if we
	consider the previous difference on the $i$th row
	\begin{equation*}
	\left[ Z_N \right]_{i,:} = \begin{cases}
	(0,K[p_{i+1}^{(l+1)}]-K[p_i^{(l+1)}],0),& {\Delta p}_{i-1}^{i} \geq 0  \land {\Delta p}_{i}^{i+1} \geq 0,  \\
	(K[p_i^{(l+1)}]-K[p_{i-1}^{(l+1)}], \\ \qquad K[p_{i+1}^{(l+1)}]+K[p_{i-1}^{(l+1)}]-2K[p_i^{(l+1)}], & {\Delta p}_{i-1}^{i} < 0 \land {\Delta p}_{i}^{i+1} \geq 0, \\ \qquad K[p_i^{(l+1)}])-K[p_{i-1}^{(l+1)}]),  \\
	(0,0,0),& {\Delta p}_{i-1}^{i} \geq 0 \land  {\Delta p}_{i}^{i+1} < 0,  \\
	(K[p_i^{(l+1)}]-K[p_{i-1}^{(l+1)}], \\ \qquad K[p_{i-1}^{(l+1)}]-K[p_i^{(l+1)}], & {\Delta p}_{i-1}^{i} < 0  \land {\Delta p}_{i}^{i+1} < 0, \\ \qquad K[p_i^{(l+1)}]-K[p_{i-1}^{(l+1)}]),  \\
	\end{cases}
	\end{equation*}
	from which we find again that the $1$-norm and the $\infty$-norm of
	the difference
	$$
	A_N -\frac{1}{2}D_N(K(p^{(l+1)})) T_N(2-2\cos\theta)
	$$
	are bounded by a constant multiple of the continuity module
	$\omega_{K(p(z))}(h_z)$ of $K(p(z))$. Thus,
	\[ \| Z_N \| \leq 4 \omega_{K(p(z))}(h_z) \rightarrow 0 \text{ for } N \rightarrow + \infty,
	\].
	Therefore, $Z_N$ is distributed as zero. Then, a direct applications
	of \textbf{GLT3} and \textbf{GLT4} tells us that $A_N \sim_{\rm GLT}
	K(p(\psi(x)))(2-2\cos(\theta))$. In an analogous way, by looking at
	the row-wise differences for the different cases, the same argument
	holds for the other constitutive parts of $\{h_z^2\,
	\prescript{{\text{\tiny UP}}}{}{L_N} \}_N$, and by means of the
	*-algebra properties from \textbf{GLT4}, we can write
	\begin{equation*}
	\begin{split}
	\{ h^2_z \prescript{{\text{\tiny UP}}}{}{L_N}\}_N \sim_{\rm GLT} & K(p(\psi(x)))(2-2\cos(\theta)) + p(\psi(x))K'(p(\psi(x)))(2-2\cos(\theta)) \\
	& - p(\psi(x))K'(p(\psi(x)))(2-2\cos(\theta))\\& = K(p(\psi(x))) (2-2\cos(\theta)). \qedhere
	\end{split}
	\end{equation*}
\end{proof}

\begin{theorem}\label{thm:eigenvaluedistributionwithupstreammean}
	The matrix sequence $\{h_z^2\, \prescript{{\text{\tiny
				UP}}}{}{J_N}\}_N$ is distributed in the eigenvalue sense as the
	function
	\begin{equation*}
	\kappa(x,\theta) = C s'(p(\psi(x))) + K(p(\psi(x))) (2-2\cos(\theta)),
	\end{equation*}
	where $\psi(x)$ is the function mapping $[0,1]$ interval to the domain of definition for the $x$ variable.
\end{theorem}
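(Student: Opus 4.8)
The plan is to repeat, almost verbatim, the argument of Theorem~\ref{thm:eigenvaluedistributionwitharithmeticmean}, since every ingredient has already been assembled for the upstream discretization. First I would invoke the splitting
\[
\{h_z^2\,\prescript{{\text{\tiny UP}}}{}{J_N}\}_N = \{h_z^2\,\prescript{{\text{\tiny UP}}}{}{D_N}\}_N + \{h_z^2\,\prescript{{\text{\tiny UP}}}{}{L_N}\}_N + \{h_z^2\,\prescript{{\text{\tiny UP}}}{}{T_N}\}_N,
\]
and recall that, by Lemma~\ref{lem:symboltime}, $\{h_z^2\,\prescript{{\text{\tiny UP}}}{}{D_N}\}_N\sim_{\rm GLT} C s'(p(\psi(x)))$ (the diagonal term coincides with the arithmetic one), by Lemma~\ref{lem:symboldiffusionupstream}, $\{h_z^2\,\prescript{{\text{\tiny UP}}}{}{L_N}\}_N\sim_{\rm GLT} K(p(\psi(x)))(2-2\cos\theta)$, and by Lemma~\ref{lem:symboltransport}, $\{h_z^2\,\prescript{{\text{\tiny UP}}}{}{T_N}\}_N\sim_\sigma 0$, hence $\sim_{\rm GLT}0$ by \textbf{GLT3}. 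The $*$-algebra property \textbf{GLT4} then gives
\[
\{h_z^2\,\prescript{{\text{\tiny UP}}}{}{J_N}\}_N\sim_{\rm GLT}\kappa(x,\theta)=C s'(p(\psi(x)))+K(p(\psi(x)))(2-2\cos\theta),
\]
and \textbf{GLT1} already yields $\{h_z^2\,\prescript{{\text{\tiny UP}}}{}{J_N}\}_N\sim_\sigma\kappa$.

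The remaining work is to upgrade the GLT/singular-value relation to an eigenvalue distribution through \textbf{GLT2}. To this end I would write $h_z^2\,\prescript{{\text{\tiny UP}}}{}{J_N}=X_N+Y_N$, where $X_N$ keeps $h_z^2\,\prescript{{\text{\tiny UP}}}{}{D_N}$ together with the Hermitian (symmetric) part of $h_z^2\,\prescript{{\text{\tiny UP}}}{}{L_N}$, while $Y_N$ collects the skew-symmetric part of $h_z^2\,\prescript{{\text{\tiny UP}}}{}{L_N}$ and the whole transport term $h_z^2\,\prescript{{\text{\tiny UP}}}{}{T_N}$. By construction every $X_N$ is Hermitian; the entries of both $X_N$ and $Y_N$ are uniformly bounded, because $K$, $K'$, $s'$ and $p$ stay in a fixed bounded range and $h_z^2/\Delta t\le C$ under the compatibility condition, so $\|X_N\|,\|Y_N\|\le C'$. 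Finally, $N^{-1}\|Y_N\|_1\to0$: the transport contribution satisfies $\|h_z^2\,\prescript{{\text{\tiny UP}}}{}{T_N}\|=O(1/N)$ by Lemma~\ref{lem:symboltransport}, while the skew-symmetric part of $h_z^2\,\prescript{{\text{\tiny UP}}}{}{L_N}$ is tridiagonal with off-diagonal entries that are differences of $K(p(\cdot))$ and $p(\cdot)K'(p(\cdot))$ evaluated at adjacent nodes, hence bounded in modulus by a fixed multiple of $\omega_{K(p(z))}(h_z)+\omega_{p(z)K'(p(z))}(h_z)=o(1)$; in both cases $\|Y_N\|\to0$ and a fortiori $N^{-1}\|Y_N\|_1\le\|Y_N\|\to0$. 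Applying \textbf{GLT2} then gives $\{h_z^2\,\prescript{{\text{\tiny UP}}}{}{J_N}\}_N\sim_\lambda\kappa$, which is the statement, $\kappa$ being real-valued and therefore a legitimate eigenvalue symbol.

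The main obstacle, exactly as in Lemma~\ref{lem:symboldiffusionupstream}, is the bookkeeping concealed in the case-split of the upstream stencil: one must check that, regardless of the signs of the interface differences ${\Delta p}_i^{i+1}$ and ${\Delta p}_{i-1}^{i}$, the super- and sub-diagonal entries of $h_z^2\,\prescript{{\text{\tiny UP}}}{}{L_N}$ in symmetric positions differ only by quantities of size $\omega_{K(p(z))}(h_z)$ (and analogously for the $p\,K'$ part), so that the skew-symmetric remainder is genuinely $o(1)$ in norm. This is precisely the four-case row-wise estimate already carried out in the proof of Lemma~\ref{lem:symboldiffusionupstream}, here re-used to control $Y_N$ instead of $Z_N$; everything else is routine \textbf{GLT3}--\textbf{GLT4} manipulation together with the observation that $\kappa$ is real.
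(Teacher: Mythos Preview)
Your proposal is correct and follows essentially the same two-step route as the paper: combine Lemmas~\ref{lem:symboltime}, \ref{lem:symboldiffusionupstream}, \ref{lem:symboltransport} via \textbf{GLT4} to obtain the GLT symbol, then invoke \textbf{GLT2} to pass to the eigenvalue distribution. The paper's proof is terser, citing only Lemma~\ref{lem:symboltransport} to justify \textbf{GLT2}, whereas you carry out the Hermitian/non-Hermitian split more explicitly and also absorb the skew-symmetric part of $h_z^2\,\prescript{{\text{\tiny UP}}}{}{L_N}$ into $Y_N$; this extra care is sound and does not change the underlying strategy.
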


\begin{proof}
	The proof follows in two step by applying \textbf{GLT4} with the
	symbols obtained in Lemmas~\ref{lem:symboltime},
	\ref{lem:symboldiffusionupstream}, and~\ref{lem:symboltransport}. We
	first find that $\{h_z^2\, \prescript{{\text{\tiny
				ARIT}}}{}{J_N}\}_N\sim_{\rm GLT} \kappa(z,\theta) = C s'(p(\psi(x)))
	+ K(p(\psi(x))) (2-2\cos(\theta))$. Then, by \textbf{GLT2}, that
	holds in virtue of Lemma~\ref{lem:symboltransport}, we have that the
	distribution holds also in the eigenvalue sense.
\end{proof}

\begin{remark}
	The same observation made in
	Remark~\ref{rmk:onlythelaplacianmatters} holds true also for this
	case. Indeed,
	Theorem~\ref{thm:eigenvaluedistributionwithupstreammean} and
	Theorem~\ref{thm:eigenvaluedistributionwitharithmeticmean} predict
	the same spectral distribution for both the discretization choices.
	What we observe is that the spectrum is dominated asymptotically by
	the diffusive character, in a way that regardless how the diffusion
	is discretized.
\end{remark}

\section{The Jacobian matrix for upstream averages}
\label{sec:jacobian-expression}

We give here the complete expression of the Jacobian for the other case of interest, that is the Jacobian represented by the usage of the
upstream-weighted mean in~\eqref{eq:upstream_mean}, where the
interface value is determined at the grid point where the
flux is coming from. We report here the
closed form expression of the Jacobian matrix for that choice. To
shorten the notation, we pose
\begin{equation*}
\begin{split}
{\Delta p}_{i}^{i+1} = p_{i+1,j,k}^{(l)}-p_{i,j,k}^{(l)}, \qquad {\Delta p}_{i-1}^{i} = p_{i,j,k}^{(l)}-p_{i-1,j,k}^{(l)}, \\
{\Delta p}_{j}^{j+1} = p_{i,j+1,k}^{(l)}-p_{i,j,k}^{(l)}, \qquad {\Delta p}_{j-1}^{j} = p_{i,j,k}^{(l)}-p_{i,j-1,k}^{(l)}, \\
{\Delta p}_{k}^{k+1} = p_{i,j,k+1}^{(l)}-p_{i,j,k}^{(l)}, \qquad {\Delta p}_{k-1}^{k} = p_{i,j,k}^{(l)}-p_{i,j,k-1}^{(l)}, \\
\end{split}
\end{equation*}
and we report only the nonzero elements by maintaining the local
$(i,j,k)$-indexes
\begingroup
\allowdisplaybreaks
\begin{align*}
\frac{\partial \boldsymbol{\Phi}}{\partial p_{i,j,k}^{(l)}} = & \frac{\rho  \phi  s'\left(p_{i,j,k}^{(l)}\right)}{\text{$\Delta $t}} \\ & -\frac{p_{i-1,j,k}^{(l)} }{h_x^2}\begin{cases}
K'\left[p_{i,j,k}^{(l)}\right],& {\Delta p}_{i-1}^{i} \geq 0\\
0,& {\Delta p}_{i-1}^{i} < 0
\end{cases} -\frac{p_{i,j-1,k}^{(l)} }{h_y^2}\begin{cases}
K'\left[p_{i,j,k}^{(l)}\right],& {\Delta p}_{j-1}^{j} \geq 0\\
0,& {\Delta p}_{j-1}^{j} < 0
\end{cases} \\ & -\frac{p_{i,j,k-1}^{(l)} }{h_z^2}\begin{cases}
K'\left[p_{i,j,k}^{(l)}\right],& {\Delta p}_{k-1}^{k} \geq 0\\
0,& {\Delta p}_{k-1}^{k} < 0
\end{cases} -\frac{p_{i+1,j,k}^{(l)} }{h_x^2} \begin{cases}
0, & {\Delta p}_{i}^{i+1} \geq 0 \\
K'\left[p_{i,j,k}^{(l)}\right], & {\Delta p}_{i}^{i+1} < 0
\end{cases} \\
& -\frac{p_{i,j+1,k}^{(l)} }{h_y^2} \begin{cases}
0, & {\Delta p}_{j}^{j+1} \geq 0 \\
K'\left[p_{i,j,k}^{(l)}\right], & {\Delta p}_{j}^{j+1} < 0
\end{cases} -\frac{p_{i,j,k+1}^{(l)} }{h_z^2} \begin{cases}
0, & {\Delta p}_{i}^{i+1} \geq 0 \\
K'\left[p_{i,j,k}^{(l)}\right], & {\Delta p}_{k}^{k+1} < 0
\end{cases} \\
& +p_{i,j,k}^{(l)} \left(\frac{1}{h_x^2}\left( \begin{cases}
0,&{\Delta p}_{i}^{i+1} \geq 0,\\
K'\left[p_{i,j,k}^{(l)}\right],& {\Delta p}_{i}^{i+1} < 0,
\end{cases}
+
\begin{cases}
K'\left[p_{i,j,k}^{(l)}\right],& {\Delta p}_{i-1}^{i} \geq 0,\\
0,& {\Delta p}_{i-1}^{i} < 0,
\end{cases}
\right) \right. \\ &
+ \frac{1}{h_y^2}\left( \begin{cases}
0,&{\Delta p}_{j}^{j+1} \geq 0,\\
K'\left[p_{i,j,k}^{(l)}\right],& {\Delta p}_{j}^{j+1} < 0,
\end{cases}
+
\begin{cases}
K'\left[p_{i,j,k}^{(l)}\right], & {\Delta p}_{j-1}^{j} \geq 0,\\
0, & {\Delta p}_{j-1}^{j} < 0,
\end{cases}
\right) \\ &\left.
+\frac{1}{h_z^2}\left( \begin{cases}
0,&{\Delta p}_{k}^{k+1} \geq 0,\\
K'\left[p_{i,j,k}^{(l)}\right],& {\Delta p}_{k}^{k+1} < 0,
\end{cases}
+
\begin{cases}
K'\left[p_{i,j,k}^{(l)}\right],& {\Delta p}_{k-1}^{k} \geq 0,\\
0,& {\Delta p}_{k-1}^{k} < 0,
\end{cases}
\right) \right) \\
& + \frac{1}{h_x^2}\left(\begin{cases}
K\left[p_{i+1,j,k}^{(l)}\right], & {\Delta p}_{i}^{i+1} \geq 0,\\
K\left[p_{i,j,k}^{(l)}\right], & {\Delta p}_{i}^{i+1} < 0,
\end{cases}
+
\begin{cases}
K\left[p_{i,j,k}^{(l)}\right], & {\Delta p}_{i-1}^{i} \geq 0\\
K\left[p_{i-1,j,k}^{(l)}\right], & {\Delta p}_{i-1}^{i} < 0,
\end{cases}\right) \\
& + \frac{1}{h_y^2}\left(\begin{cases}
K\left[p_{i,j+1,k}^{(l)}\right], & {\Delta p}_{j}^{j+1} \geq 0,\\
K\left[p_{i,j,k}^{(l)}\right], & {\Delta p}_{j}^{j+1} < 0,
\end{cases}
+
\begin{cases}
K\left[p_{i,j,k}^{(l)}\right], & {\Delta p}_{j-1}^{j} \geq 0\\
K\left[p_{i,j-1,k}^{(l)}\right], & {\Delta p}_{j-1}^{j} < 0,
\end{cases}\right) \stepcounter{equation}\tag{\theequation}\label{eq:jacobian_upstreamaverage} \\
& + \frac{1}{h_z^2}\left(\begin{cases}
K\left[p_{i,j,k+1}^{(l)}\right], & {\Delta p}_{k}^{k+1} \geq 0,\\
K\left[p_{i,j,k}^{(l)}\right], & {\Delta p}_{k}^{k+1} < 0,
\end{cases}
+
\begin{cases}
K\left[p_{i,j,k}^{(l)}\right], & {\Delta p}_{k-1}^{k} \geq 0\\
K\left[p_{i,j,k-1}^{(l)}\right], & {\Delta p}_{k-1}^{k} < 0,
\end{cases}\right), \\
\frac{\partial \boldsymbol{\Phi}}{\partial p_{i-1,j,k}^{(l)}} = & \frac{p_{i,j,k}^{(l)}}{h_x^2} \begin{cases}
0, & {\Delta p}_{i-1}^{i} \geq 0 \\
K'\left[p_{i-1,j,k}^{(l)}\right], & {\Delta p}_{i-1}^{i} < 0
\end{cases} - \frac{p_{i-1,j,k}^{(l)}}{h_x^2} \begin{cases}
0, & {\Delta p}_{i-1}^{i} \geq 0 \\
K'\left[p_{i-1,j,k}^{(l)}\right], & {\Delta p}_{i-1}^{i} \geq 0
\end{cases} \\
& - \frac{1}{h_x^2} \begin{cases}
K\left[p_{i,j,k}^{(l)}\right], & {\Delta p}_{i-1}^{i} \geq 0\\
K\left[p_{i-1,j,k}^{(l)}\right], & {\Delta p}_{i-1}^{i} < 0,
\end{cases},\\
\frac{\partial \boldsymbol{\Phi}}{\partial p_{i+1,j,k}^{(l)}} = & \frac{p_{i,j,k}^{(l)}}{h_x^2} \begin{cases}
K'\left[p_{i+1,j,k}^{(l)}\right], & {\Delta p}_{i}^{i+1} \geq 0 \\
0, & {\Delta p}_{i}^{i+1} < 0\\
\end{cases} -\frac{p_{i+1,j,k}^{(l)}}{h_x^2}\begin{cases}
K'\left[p_{i+1,j,k}^{(l)}\right],& {\Delta p}_{i}^{i+1} \geq 0\\
0,& {\Delta p}_{i}^{i+1} < 0
\end{cases} \\
&-\frac{1}{h_x^2}\begin{cases}
K\left[p_{i+1,j,k}^{(l)}\right], & {\Delta p}_{i}^{i+1} \geq 0,\\
K\left[p_{i,j,k}^{(l)}\right], & {\Delta p}_{i}^{i+1} < 0,
\end{cases}\\
\frac{\partial \boldsymbol{\Phi}}{\partial p_{i,j-1,k}^{(l)}} = & \frac{p_{i,j,k}^{(l)}}{h_y^2} \begin{cases}
0, & {\Delta p}_{j-1}^{j} \geq 0 \\
K'\left[p_{i,j-1,k}^{(l)}\right], & {\Delta p}_{j-1}^{j} < 0
\end{cases} - \frac{p_{i-1,j,k}^{(l)}}{h_y^2} \begin{cases}
0, & {\Delta p}_{j-1}^{j} \geq 0 \\
K'\left[p_{i,j-1,k}^{(l)}\right], & {\Delta p}_{j-1}^{j} \geq 0
\end{cases} \\
& - \frac{1}{h_y^2} \begin{cases}
K\left[p_{i,j,k}^{(l)}\right], & {\Delta p}_{j-1}^{j} \geq 0\\
K\left[p_{i,j-1,k}^{(l)}\right], & {\Delta p}_{j-1}^{j} < 0,
\end{cases},\\
\frac{\partial \boldsymbol{\Phi}}{\partial p_{i,j+1,k}^{(l)}} = & \frac{p_{i,j,k}^{(l)}}{h_y^2} \begin{cases}
K'\left[p_{i,j+1,k}^{(l)}\right], & {\Delta p}_{j}^{j+1} \geq 0 \\
0, & {\Delta p}_{j}^{j+1} < 0\\
\end{cases} -\frac{p_{i,j+1,k}^{(l)}}{h_y^2}\begin{cases}
K'\left[p_{i,j+1,k}^{(l)}\right],& {\Delta p}_{j}^{j+1} \geq 0\\
0,& {\Delta p}_{j}^{j+1} < 0
\end{cases} \\
&-\frac{1}{h_y^2}\begin{cases}
K\left[p_{i,j+1,k}^{(l)}\right], & {\Delta p}_{j}^{j+1} \geq 0,\\
K\left[p_{i,j,k}^{(l)}\right], & {\Delta p}_{j}^{j+1} < 0,
\end{cases}\\
\frac{\partial \boldsymbol{\Phi}}{\partial p_{i,j,k-1}^{(l)}} = & \frac{p_{i,j,k}^{(l)}}{h_z^2} \begin{cases}
0, & {\Delta p}_{k-1}^{k} \geq 0 \\
K'\left[p_{i,j,k-1}^{(l)}\right], & {\Delta p}_{k-1}^{k} < 0
\end{cases} - \frac{p_{i,j,k-1}^{(l)}}{h_y^2} \begin{cases}
0, & {\Delta p}_{k-1}^{k} \geq 0 \\
K'\left[p_{i,j,k-1}^{(l)}\right], & {\Delta p}_{k-1}^{k} \geq 0
\end{cases} \\
& - \frac{1}{h_z^2} \begin{cases}
K\left[p_{i,j,k}^{(l)}\right], & {\Delta p}_{k-1}^{k} \geq 0\\
K\left[p_{i,j,k-1}^{(l)}\right], & {\Delta p}_{k-1}^{j} < 0,
\end{cases} + \frac{K'\left[p_{i,j,k-1}^{(l)}\right]}{2 h_z},\\
\frac{\partial \boldsymbol{\Phi}}{\partial p_{i,j,k+1}^{(l)}} = & \frac{p_{i,j,k}^{(l)}}{h_z^2} \begin{cases}
K'\left[p_{i,j,k+1}^{(l)}\right], & {\Delta p}_{k}^{k+1} \geq 0 \\
0, & {\Delta p}_{k}^{k+1} < 0\\
\end{cases} -\frac{p_{i,j,k+1}^{(l)}}{h_z^2}\begin{cases}
K'\left[p_{i,j,k+1}^{(l)}\right],& {\Delta p}_{k}^{k+1} \geq 0\\
0,& {\Delta p}_{k}^{k+1} < 0
\end{cases} \\
&-\frac{1}{h_z^2}\begin{cases}
K\left[p_{i,j,k+1}^{(l)}\right], & {\Delta p}_{k}^{k+1} \geq 0,\\
K\left[p_{i,j,k}^{(l)}\right], & {\Delta p}_{k}^{k+1} < 0,
\end{cases} - \frac{K'\left[p_{i,j,k+1}^{(l)}\right]}{2 h_z}.
\end{align*}
\endgroup

\section{A comparison between spectral characterization and true eigenvalues}
\label{theoreticalresults}

To illustrate the results depicted in Theorem~\ref{thm:eigenvaluedistributionwitharithmeticmean}. We
consider a small 1D example for having a small MATLAB implementation, and being able of looking the comparison between the eigenvalues of the Jacobian matrices for the full Newton method and their distribution function. That is, we check how good the asymptotic description of the spectrum behaves with respect to the true eigenvalues.

Specifically, we consider a 1D water infiltration test\cite{doi:10.2136/sssaj1977.03615995004100020024x} for a column of soil with parameters for the Van Genuchten model~\eqref{eq:s(p)}-\eqref{eq:K(p)} selected as
\begin{equation}\label{eq:coefficientspectrum1}
\begin{split}
\alpha = \texttt{1.611e+6},\, \beta  = 3.96,\, \gamma = 4.74,\, a     = \texttt{1.175e+6},\\ S_s    = 0.287,\, S_r    = 0.075,\, K_s    = 0.00944 \si{cm/s},
\end{split}
\end{equation}
with initial condition $p(z,0) = -61.5 \si{cm}$, boundary conditions $p(40 \si{cm},t) = -20.7 \si{cm}$ and $p(0,t) = -61.5 \si{cm}$, i.e., we are assuming the vertical dimension to be positive in the upward direction. In Figure~\ref{fig:eigenvaluesexample1} we have reported the spectrum of the Jacobian for different time steps and different iterates of the full Newton method with no line search.
\begin{figure}[htbp]
	\centering
	\includegraphics[width=\columnwidth]{./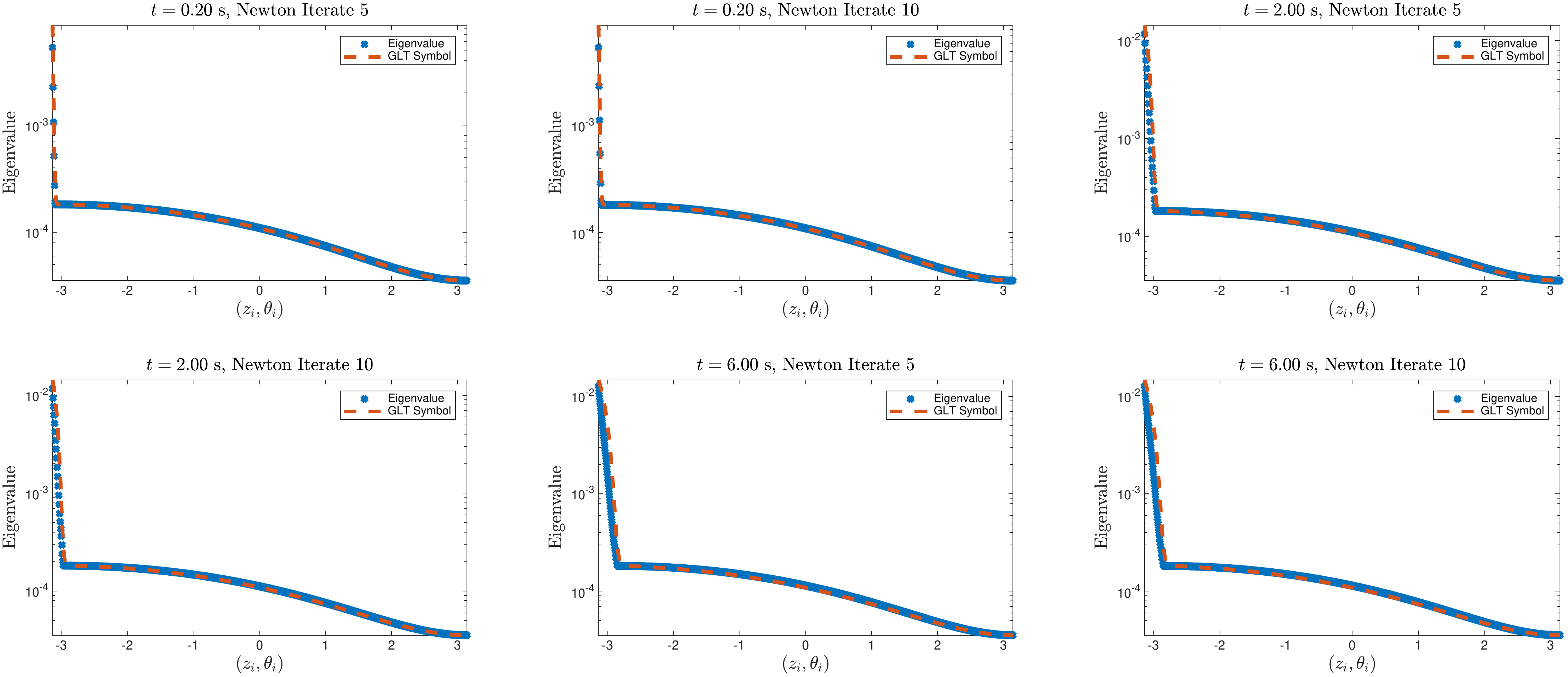}
	\caption{Arithmetic average. Comparison of the eigenvalues and spectral symbol for the coefficients in~\eqref{eq:coefficientspectrum1} with $h_z = 40/(N-1)$, $\Delta t = 0.1$, and $N=800$ on different time steps and for different iterates of the full Newton method.\label{fig:eigenvaluesexample1}}
\end{figure}
As we observe, even for a small grid size thus namely far away from the asymptotic regime, there is a good matching between the predicted eigenvalues and the computed ones.

We consider the same test problem also for the case in which we use the upstream-weighted mean of the values of $K$ in~\eqref{eq:K(p)}. Again the result proved in Theorem~\ref{thm:eigenvaluedistributionwithupstreammean} is clearly depicted in Figure~\ref{fig:eigenvaluesexample2}, in which we observe a good accordance between the spectral symbol and the computed eigenvalues already at coarse grid.
\begin{figure}[htbp]
	\centering
	\includegraphics[width=\columnwidth]{./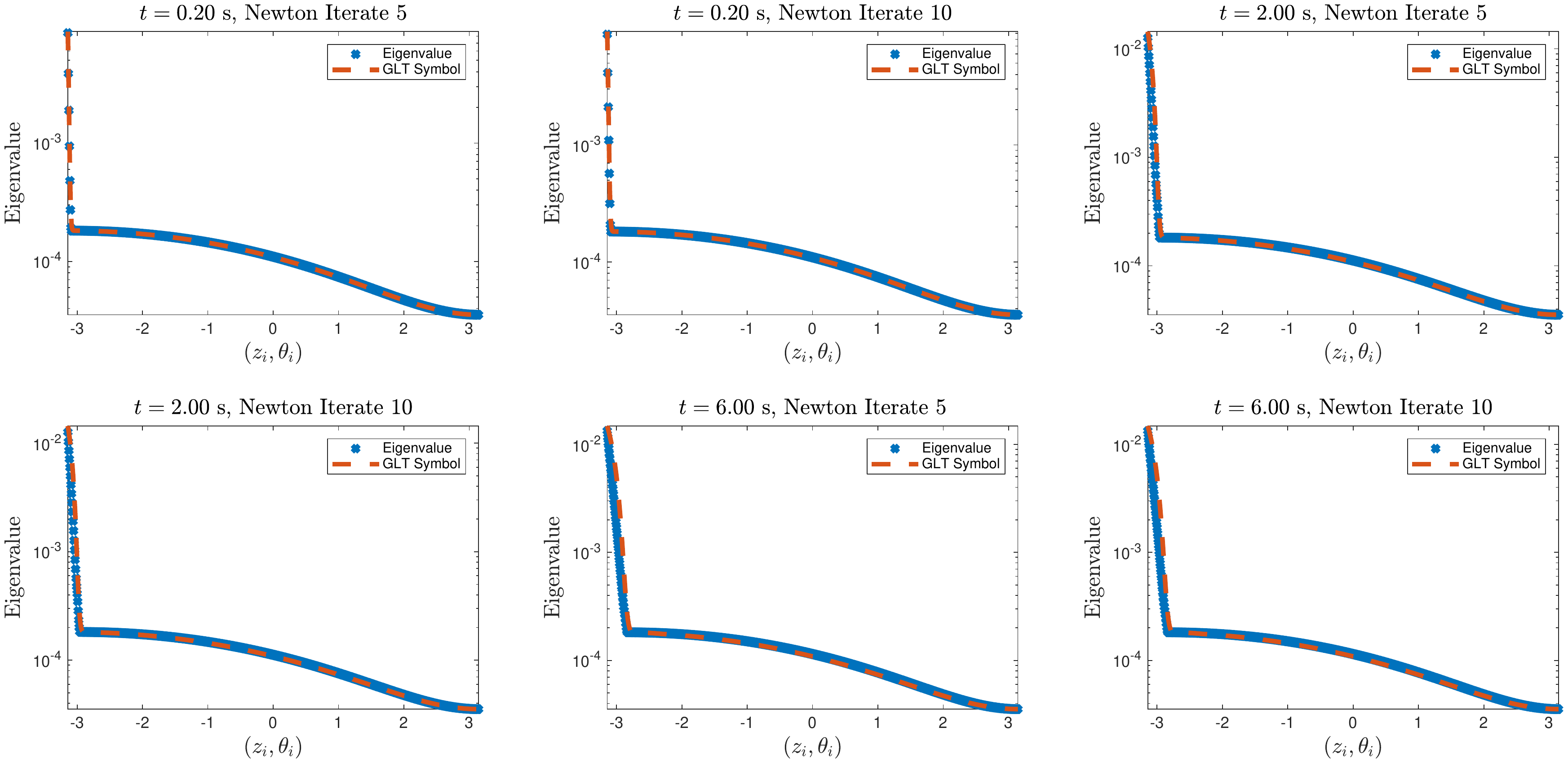}
	\caption{Upstream-weighted average. Comparison of the eigenvalues and spectral symbol for the coefficients in~\eqref{eq:coefficientspectrum1} with $h_z = 40/(N-1)$, $\Delta t = 0.1$, and $N=800$ on different time steps and for different iterates of the full Newton method.\label{fig:eigenvaluesexample2}}
\end{figure}

To have a comparison in between the approximate and full sequence we consider also the only preconditioner we can apply on both
sequence, namely the AS preconditioner, and compare the average number of iteration across the time steps in the \emph{weak scaling} regime 
described in Section~\ref{sec:weak_scalability}. From the results in Table~\ref{tab:spectral_equiv_iteration} we observe readily that the 
spectrally equivalent sequence clearly catches the properties we are interested in.

\begin{table}[htbp]
    \centering
    \begin{tabular}{ccc}
        \toprule
        $n_p$ & Original sequence & Approximating sequence \\
        \midrule
           1&         212&         212\\
           4&         267&         267\\
          16&         277&         285\\
          64&         296&         296\\
         256&         217&         217\\
        1024&         296&         296\\
        \bottomrule
    \end{tabular}
    \caption{Weak scaling. Comparison of the (rounded) average number of linear iteration employed by the AS preconditioner when built directly
    on the original sequence and when built on the approximating sequence obtained from the spectral analysis.}
    \label{tab:spectral_equiv_iteration}
\end{table}

\section{Weak scalability} 
\label{sec:algscalability}

In this section we complement the results discussed in Section~\ref{sec:weak_scalability} by considering further details on the weak-scalability analysis. 

In first place, we want to investigate numerically the consequences of Theorem~\ref{thm:spectraldescription}. Indeed, from the theoretical analysis we expect that if we would use just the discretization of the diffusive part of the equation, then a cluster of eigenvalues at $1$ would appear and \emph{suggest} good performances for the underlying Krylov solver. Since this is not feasible, we resorted to using either the multigrid hierarchies to further approximate the diffusive part. Therefore, we investigate here numerically the capability of such approximating sequence with respect to the original one. To have an homogeneous situation with respect to the coarse grid, and diminish the number of variables that could change the interpretation of the result, we resort to using just 30 iterations of a Block-Jacobi method with the ILU(0) factorization as a coarse solver.
\begin{table}[htb]
	\centering
	\subfloat[VDSVMB]{
		\begin{tabular}{rcccc}
			\toprule
			&  Avg. Nl. &          & \multicolumn{2}{c}{Avg. L. It.s} \\ 
			$n_p$ &  It.s     & N Jac.s & & R \\
			\midrule
			1 & 3 & 12 & 58 & 57 \\
			4 & 3 & 12 & 69 & 69 \\
			16 & 3 & 12 & 68 & 68 \\
			64 & 3 & 12 & 57 & 59 \\
			256 & 3 & 12 & 56 & 58 \\
			1024 & 3 & 12 & 57 & 57 \\
			4096 & 3 & 12 & 56 & 56 \\
			\bottomrule
		\end{tabular}
	}\hfil
	\subfloat[VSMATCH]{
		\begin{tabular}{rcccc}
			\toprule
			&  Avg. Nl. &          & \multicolumn{2}{c}{Avg. L. It.s} \\ 
			$n_p$ &  It.s     & N Jac.s & & R \\
			\midrule
			1 & 3 & 11 & 82 & 83 \\
			4 & 3 & 12 & 87 & 73 \\
			16 & 3 & 12 & 83 & 75 \\
			64 & 3 & 12 & 79 & 87 \\
			256 & 3 & 12 & 77 & 82 \\
			1024 & 3 & 12 & 75 & 75 \\
			4096 & 3 & 12 & 73 & 77 \\
			\bottomrule
		\end{tabular}
	}
	\caption{Algorithmic scalability for the Richards test problem. The average number of linear iteration per Newton iteration is computed taking the average of the ratio of linear iteration per Newton step across the time steps. }\label{tab:richards}
\end{table}
From the results collected in Table~\ref{tab:richards}, we observe that both the multigrid strategies do manage in keeping the number of linear iteration more stable and smaller than the one-level AS preconditioner. This makes for an empirical confirmation of the two facts we want to ascertain. On one hand the spectral information of the matrix sequence  $\{J_{\mathbf{N}}\}_{\mathbf{N}}$ are confirmed to be captured by the symmetric approximation, secondly, the multigrid hierarchies give a reliable approximation of the effect of inverting the theoretically implied sequence.

\subsection{Finer details on the build and solve phases}
\label{sec:details-on-build-and-solve}

To deepen the analysis, we look with a finer degree of detail at the performances of every slices of the bar-plots in Figure~\ref{fig:timespentpercentage}. Let us start from the bottom and look at the build phase of the Jacobian matrices; Figure~\ref{fig:matrix-build-scaling}. Up to $4096$ cores the assembly phase stays around the 70\% of efficiency, and is only sligthly diminished to around the 60\% on $8192$ cores.
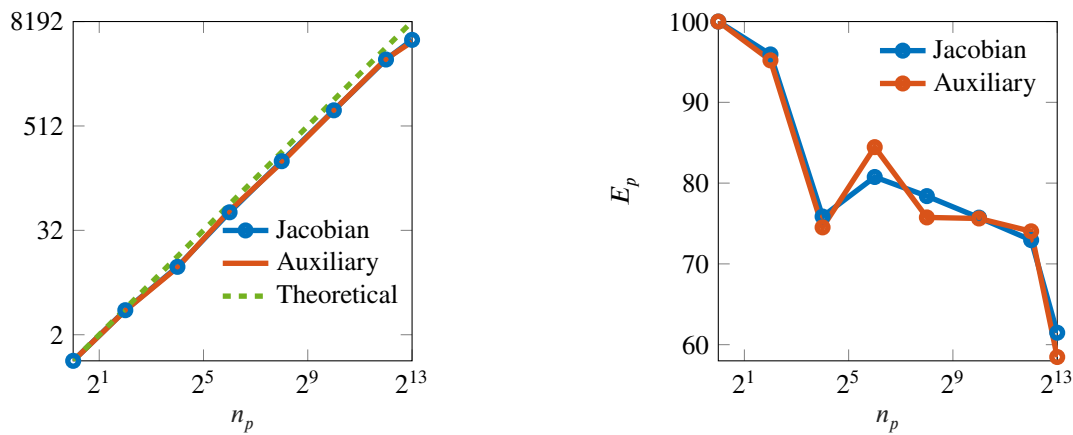
\begin{figure}[htbp]
	\centering
	% This file was created by matlab2tikz.
%
%The latest updates can be retrieved from
%  http://www.mathworks.com/matlabcentral/fileexchange/22022-matlab2tikz-matlab2tikz
%where you can also make suggestions and rate matlab2tikz.
%
\definecolor{mycolor1}{rgb}{0.00000,0.44700,0.74100}%
\definecolor{mycolor2}{rgb}{0.85000,0.32500,0.09800}%
\definecolor{mycolor4}{rgb}{0.45000,0.69400,0.12500}%
\begin{tikzpicture}

\begin{axis}[%
width=0.25\columnwidth,
height=0.25\columnwidth,
at={(0.758in,0.481in)},
scale only axis,
xmode=log,
log basis x=2,
xmin=1,
xmax=8192,
xminorticks=true,
xlabel style={font=\color{white!15!black}},
xlabel={$n_p$},
ymode=log,
log basis y=2,
ymin=1,
ymax=8192,
yminorticks=true,
yticklabels={2,32,512,8192},
axis background/.style={fill=white},
title style={font=\bfseries},
legend style={legend cell align=left, align=left, draw=none, fill=none,at={(1,0.45)}}
]
\addplot [color=mycolor1, line width=2.0pt, mark=o, mark options={solid, mycolor1}]
  table[row sep=crcr]{%
1	1\\
4	3.83723284819041\\
16	12.137956417059\\
64	51.6849053069683\\
256	200.682418191902\\
1024	775.454764749627\\
4096	2987.87848797592\\
8192	5036.3814534813\\
};
\addlegendentry{Jacobian}

\addplot [color=mycolor2, line width=2.0pt]
  table[row sep=crcr]{%
1	1\\
4	3.80787968060231\\
16	11.9231982514745\\
64	54.0505771140735\\
256	193.926226162632\\
1024	774.358209060986\\
4096	3032.39192255896\\
8192	4789.98932807705\\
};
\addlegendentry{Auxiliary}

\addplot [color=mycolor4, line width=2.0pt, dashed]
  table[row sep=crcr]{%
1	1\\
4	4\\
16	16\\
64	64\\
256	256\\
1024 1024\\
4096 4096\\
8192 8192\\
};
\addlegendentry{Theoretical}

\end{axis}

\begin{axis}[%
width=0.25\columnwidth,
height=0.25\columnwidth,
at={(4.1in,0.481in)},
scale only axis,
xmode=log,
log basis x=2,
xmin=1,
xmax=8192,
xminorticks=true,
xlabel style={font=\color{white!15!black}},
xlabel={$n_p$},
ylabel={$E_p$},
ymin=58,
ymax=100,
axis background/.style={fill=white},
title style={font=\bfseries},
legend style={legend cell align=left, align=left, draw=none, fill=none}
]
\addplot [color=mycolor1, line width=2.0pt, mark=o, mark options={solid, mycolor1}]
  table[row sep=crcr]{%
1	100\\
4	95.9308212047603\\
16	75.8622276066187\\
64	80.757664542138\\
256	78.3915696062119\\
1024	75.7280043700808\\
4096	72.9462521478497\\
8192	61.4792657895666\\
};
\addlegendentry{Jacobian}

\addplot [color=mycolor2, line width=2.0pt, mark=o, mark options={solid, mycolor2}]
  table[row sep=crcr]{%
1	100\\
4	95.1969920150578\\
16	74.5199890717156\\
64	84.4540267407398\\
256	75.752432094778\\
1024	75.620918853612\\
4096	74.0330059218496\\
8192	58.471549415003\\
};
\addlegendentry{Auxiliary}

\end{axis}
\end{tikzpicture}%
	\caption{Weak scaling. Speedup (left) and scaled efficiency (right) for setup of the Jacobian matrices.}
	\label{fig:matrix-build-scaling}
\end{figure}
Indeed, even if we are using a method that needs to assembly the matrices for building the preconditioners, we observe that this doesn't hampers the overall efficiency of the method. This is even more true if we also consider the efficiency of the function evaluation routine (that is instead implemented in a matrix-free way) given in Figure~\ref{fig:feval-scaling}. We notice that this mimics the overall efficiency of the whole procedure.
\begin{figure}[htbp]
	\centering
	% This file was created by matlab2tikz.
%
%The latest updates can be retrieved from
%  http://www.mathworks.com/matlabcentral/fileexchange/22022-matlab2tikz-matlab2tikz
%where you can also make suggestions and rate matlab2tikz.
%
\definecolor{mycolor1}{rgb}{0.00000,0.44700,0.74100}%
\definecolor{mycolor4}{rgb}{0.45000,0.69400,0.12500}%
\begin{tikzpicture}

\begin{axis}[%
width=0.25\columnwidth,
height=0.25\columnwidth,
at={(0.758in,0.481in)},
scale only axis,
xmode=log,
log basis x = 2,
log basis y = 2,
xmin=1,
xmax=8192,
xminorticks=true,
xlabel style={font=\color{white!15!black}},
xlabel={$n_p$},
ymode=log,
ymin=1,
ymax=8192,
yminorticks=true,
axis background/.style={fill=white},
title style={font=\bfseries},
yticklabels={2,32,512,8192},
legend style={legend cell align=left, align=left, draw=none, fill=none, at={(0.9,1.0)}}
]
\addplot [color=mycolor1, line width=2.0pt]
  table[row sep=crcr]{%
1	1\\
4	3.92528752534871\\
16	13.2923562409231\\
64	52.0257254153154\\
256	190.642312898308\\
1024	682.63402786094\\
4096	2417.38825896879\\
8192	4779.41204355111\\
};
\addlegendentry{Func. Evaluation}

\addplot [color=mycolor4, line width=2.0pt, dashed]
  table[row sep=crcr]{%
1	1\\
4	4\\
16	16\\
64	64\\
256	256\\
1024 1024\\
4096 4096\\
8192 8192\\
};
\addlegendentry{Theoretical}

\end{axis}

\begin{axis}[%
width=0.25\columnwidth,
height=0.25\columnwidth,
at={(3.327in,0.481in)},
scale only axis,
xmode=log,
log basis x = 2,
xmin=1,
xmax=8192,
xminorticks=true,
xlabel style={font=\color{white!15!black}},
xlabel={$n_p$},
ymin=55,
ymax=100,
axis background/.style={fill=white},
title style={font=\bfseries},
ylabel={$E_p$},
legend style={legend cell align=left, align=left, draw=white!15!black}
]
\addplot [color=mycolor1, line width=2.0pt]
  table[row sep=crcr]{%
1	100\\
4	98.1321881337178\\
16	83.0772265057695\\
64	81.2901959614304\\
256	74.4696534759016\\
1024	66.663479283295\\
4096	59.0182680412302\\
8192	58.3424321722548\\
};
%\entry{data1}

\end{axis}
\end{tikzpicture}%
	\caption{Weak scaling. Speedup (left) and scaled efficiency (right) for the evaluation of the $\boldsymbol{\Phi}$ function.}
	\label{fig:feval-scaling}
\end{figure}
As it usually happens in nonlinear processes, what characterizes the overall performance is indeed the number and efficiency of the function evaluation calls. That, as we see from Table~\ref{tab:weak-scaling-iterations}, are very stable with respect to the number of employed cores when the VSMATCH preconditioner is used for Newton correction, while shows some oscillations for the other preconditioners.
\begin{table}[htbp]
	\centering
	\begin{tabular}{||c||c|c||c|c||c|c||}
		\toprule
		& \multicolumn{2}{c||}{VDSVBM} & \multicolumn{2}{c||}{VSMATCH} & \multicolumn{2}{c||}{AS} \\
		\midrule
		$n_p$   & N Jac.s & NLin It.s & N Jac.s & NLin It.s & N Jac.s & NLin It.s  \\
		\midrule
		1 & 3 & 37 & 3 & 36 & 3 & 40 \\
		4 & 3 & 38 & 3 & 38 & 3 & 36 \\
		16 & 3 & 38 & 3 & 38 & 3 & 40 \\
		64 & 3 & 37 & 3 & 38 & 4 & 37 \\
		256 & 3 & 37 & 3 & 38 & 4 & 39 \\
		1024 & 3 & 39 & 3 & 38 & 4 & 41 \\
		4096 & 3 & 41 & 3 & 38 & 4 & 47 \\
		8192 & 3 & 40 & 3 & 38 & 4 & 48 \\
		\bottomrule
	\end{tabular}
	\caption{Weak scaling. Number of nonlinear iterations (NLin It.s), and number of computed Jacobians (N Jac.s) for the three preconditioners.}
	\label{tab:weak-scaling-iterations}
\end{table}

Finally, let us look further into the scalability performances of the solution of the linear systems associated with the Newton correction. In our strategy this is divided into three separate phases. A \emph{complete setup} in which we build the hierarchies for the first time, an \emph{update} phase in which we recompute just the smoothers when there has been a change in the Jacobian, and the \emph{application phase} inside the Krylov iterative method. 
For what concern the first two phases, Figures~\ref{fig:vsbm-setup}, \ref{fig:VSMATCH3-setup} show that the update strategy reduces effectively the time needed to prepare the preconditioner, while keeping a reasonable quality for the aggregation procedure.
\begin{figure}[htbp]
	\centering
	\subfloat[VDSVMB Setup calls.\label{fig:vsbm-setup}]{% This file was created by matlab2tikz.
%
%The latest updates can be retrieved from
%  http://www.mathworks.com/matlabcentral/fileexchange/22022-matlab2tikz-matlab2tikz
%where you can also make suggestions and rate matlab2tikz.
%
\definecolor{mycolor1}{rgb}{0.00000,0.44700,0.74100}%
\definecolor{mycolor2}{rgb}{0.85000,0.32500,0.09800}%
\definecolor{mycolor3}{rgb}{0.92900,0.69400,0.12500}%
\definecolor{mycolor4}{rgb}{0.49400,0.18400,0.55600}%
\definecolor{mycolor5}{rgb}{0.46600,0.67400,0.18800}%
\definecolor{mycolor6}{rgb}{0.30100,0.74500,0.93300}%
\definecolor{mycolor7}{rgb}{0.63500,0.07800,0.18400}%
\definecolor{mycolor7}{rgb}{0.63500,0.07800,0.18400}%
\definecolor{mycolor8}{rgb}{0.92900,0.07800,0.18400}%
\begin{tikzpicture}

\begin{axis}[%
width=0.25\columnwidth,
height=0.25\columnwidth,
at={(0.758in,0.481in)},
scale only axis,
xmin=1,
xmax=3,
xlabel style={font=\color{white!15!black}},
%xlabel={Preconditioner Setup},
%ymode=log,
ymin=0,
ymax=1.5,
yminorticks=false,
xtick={1, 2, 3},
ylabel style={font=\color{white!15!black}},
axis background/.style={fill=white},
title style={font=\bfseries},
]
\addplot [color=mycolor1, line width=2.0pt, mark=o, mark options={solid, mycolor1}]
  table[row sep=crcr]{%
1	0.2382024\\
2	0.07840671\\
3	0.0783177\\
};
%\addlegendentry{$n_p$ = 1}

\addplot [color=mycolor2, line width=2.0pt, mark=o, mark options={solid, mycolor2}]
  table[row sep=crcr]{%
1	0.2807704\\
2	0.0904638\\
3	0.09080171\\
};
%\addlegendentry{$n_p$ = 4}

\addplot [color=mycolor3, line width=2.0pt, mark=o, mark options={solid, mycolor3}]
  table[row sep=crcr]{%
1	0.346576\\
2	0.1084237\\
3	0.1091664\\
};
%\addlegendentry{$n_p$ = 16}

\addplot [color=mycolor4, line width=2.0pt, mark=o, mark options={solid, mycolor4}]
  table[row sep=crcr]{%
1	0.4072839\\
2	0.109827\\
3	0.1088391\\
};
%\addlegendentry{$n_p$ = 64}

\addplot [color=mycolor5, line width=2.0pt, mark=o, mark options={solid, mycolor5}]
  table[row sep=crcr]{%
1	0.4125552\\
2	0.1394439\\
3	0.1253072\\
};
%\addlegendentry{$n_p$ = 256}

\addplot [color=mycolor6, line width=2.0pt, mark=o, mark options={solid, mycolor6}]
  table[row sep=crcr]{%
1	0.5308499\\
2	0.1554613\\
3	0.1329764\\
};
%\addlegendentry{$n_p$ = 1024}

\addplot [color=mycolor7, line width=2.0pt, mark=o, mark options={solid, mycolor7}]
  table[row sep=crcr]{%
1	0.5470355\\
2	0.1608376\\
3	0.168339\\
};
%\addlegendentry{$n_p$ = 4096}

\addplot [color=mycolor8, line width=2.0pt, mark=o, mark options={solid, mycolor8}]
  table[row sep=crcr]{%
1	0.7213076\\
2	0.1847844\\
3	0.1833497\\
};
%\addlegendentry{$n_p$ = 8192}

\end{axis}
\end{tikzpicture}%}
	\subfloat[VSMATCH Setup calls.\label{fig:VSMATCH3-setup}]{% This file was created by matlab2tikz.
%
%The latest updates can be retrieved from
%  http://www.mathworks.com/matlabcentral/fileexchange/22022-matlab2tikz-matlab2tikz
%where you can also make suggestions and rate matlab2tikz.
%
\definecolor{mycolor1}{rgb}{0.00000,0.44700,0.74100}%
\definecolor{mycolor2}{rgb}{0.85000,0.32500,0.09800}%
\definecolor{mycolor3}{rgb}{0.92900,0.69400,0.12500}%
\definecolor{mycolor4}{rgb}{0.49400,0.18400,0.55600}%
\definecolor{mycolor5}{rgb}{0.46600,0.67400,0.18800}%
\definecolor{mycolor6}{rgb}{0.30100,0.74500,0.93300}%
\definecolor{mycolor7}{rgb}{0.63500,0.07800,0.18400}%
\definecolor{mycolor8}{rgb}{0.92900,0.07800,0.18400}%
\begin{tikzpicture}

\begin{axis}[%
width=0.25\columnwidth,
height=0.25\columnwidth,
at={(0.758in,0.481in)},
scale only axis,
unbounded coords=jump,
xmin=1,
xmax=3,
xlabel style={font=\color{white!15!black}},
%xlabel={Preconditioner Setup},
%ymode=log,
ymin=0,
ymax=1.52,
yminorticks=false,
ylabel style={font=\color{white!15!black}},
xtick={1, 2, 3},
axis background/.style={fill=white},
title style={font=\bfseries},
legend columns=1,
legend style={at={(1.7,1)}, legend cell align=left, align=left, draw=none, fill=none}
]
\addplot [color=mycolor1, line width=2.0pt, mark=o, mark options={solid, mycolor1}]
  table[row sep=crcr]{%
1	0.4423297\\
2	0.1027059\\
3	nan\\
};
\addlegendentry{$n_p$ = 1}

\addplot [color=mycolor2, line width=2.0pt, mark=o, mark options={solid, mycolor2}]
  table[row sep=crcr]{%
1	0.5715803\\
2	0.1135213\\
3	0.1147068\\
};
\addlegendentry{$n_p$ = 4}

\addplot [color=mycolor3, line width=2.0pt, mark=o, mark options={solid, mycolor3}]
  table[row sep=crcr]{%
1	0.7753036\\
2	0.1410884\\
3	0.1410647\\
};
\addlegendentry{$n_p$ = 16}

\addplot [color=mycolor4, line width=2.0pt, mark=o, mark options={solid, mycolor4}]
  table[row sep=crcr]{%
1	0.8858063\\
2	0.1497689\\
3	0.1638553\\
};
\addlegendentry{$n_p$ = 64}

\addplot [color=mycolor5, line width=2.0pt, mark=o, mark options={solid, mycolor5}]
  table[row sep=crcr]{%
1	1.093476\\
2	0.1644287\\
3	0.1635126\\
};
\addlegendentry{$n_p$ = 256}

\addplot [color=mycolor6, line width=2.0pt, mark=o, mark options={solid, mycolor6}]
  table[row sep=crcr]{%
1	1.012273\\
2	0.1705394\\
3	0.1553091\\
};
\addlegendentry{$n_p$ = 1024}

\addplot [color=mycolor7, line width=2.0pt, mark=o, mark options={solid, mycolor7}]
  table[row sep=crcr]{%
1	1.283158\\
2	0.2196235\\
3	0.1889707\\
};
\addlegendentry{$n_p$ = 4096}

\addplot [color=mycolor8, line width=2.0pt, mark=o, mark options={solid, mycolor8}]
  table[row sep=crcr]{%
1	1.519814\\
2	0.2766051\\
3	0.2606578\\
};
\addlegendentry{$n_p$ = 8192}

\end{axis}
\end{tikzpicture}%}

	\caption{Weak scaling. Setup phases of the preconditioners in which subsequent calls are updates.}
	\label{fig:setup_phases}
\end{figure}
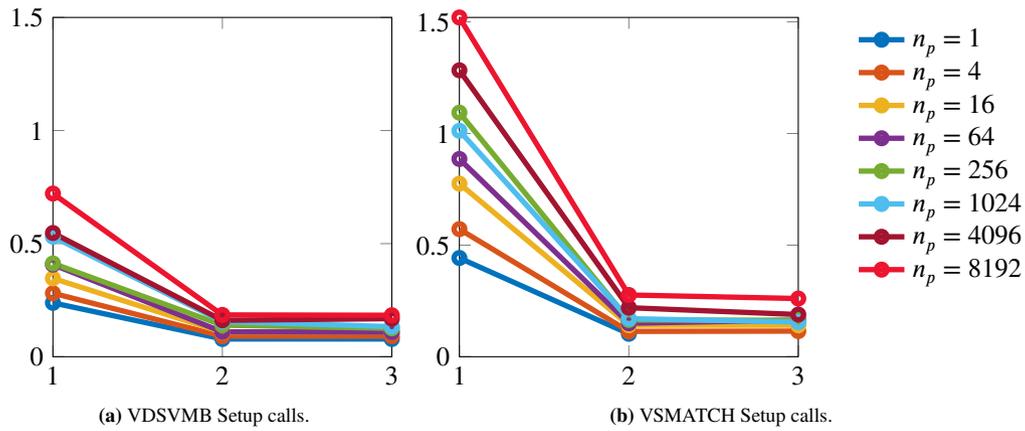
In order to better understand the results on the execution time per iteration of the AMG preconditioners discussed in the paper, we also report (average) \emph{operator complexity} of the two different methods. We recall that the operator complexity of an AMG method is defined as
\[\text{opc}=\frac{\sum_{l=0}^{\text{nl}-1}\operatorname{nnz}(A_{l})}{\operatorname{nnz}(A_0)} > 1,\]
and is usually both a measure of memory occupancy and of the time complexity for the method application in a V-cycle\cite{dambra2020b}.
\begin{figure}[htbp]
	\centering
	\subfloat[Average operator complexity \label{fig:opcomplexity}]{% This file was created by matlab2tikz.
%
%The latest updates can be retrieved from
%  http://www.mathworks.com/matlabcentral/fileexchange/22022-matlab2tikz-matlab2tikz
%where you can also make suggestions and rate matlab2tikz.
%
\definecolor{mycolor1}{rgb}{0.00000,0.44700,0.74100}%
\definecolor{mycolor2}{rgb}{0.85000,0.32500,0.09800}%
\begin{tikzpicture}

\begin{axis}[%
width=0.25\columnwidth,
height=0.25\columnwidth,
at={(0.758in,0.481in)},
scale only axis,
xmode=log,
log basis x=2,
xmin=1,
xmax=8192,
xminorticks=true,
xlabel style={font=\color{white!15!black}},
xlabel={$n_p$},
ymin=1,
ymax=2.5,
yminorticks=true,
axis background/.style={fill=white},
legend style={legend cell align=left, align=left, draw=none,fill=none}
]
\addplot [color=mycolor1, line width=2.0pt, mark=o, mark options={solid, mycolor1}]
  table[row sep=crcr]{%
1	1.37484133915575\\
4	2.10096816378066\\
16	2.11501664254703\\
64	2.10763544668588\\
256	2.1120554299406\\
1024	2.1120554299406\\
4096	2.11392396798534\\
8192	2.11484076884051\\
};
\addlegendentry{VSMATCH}

\addplot [color=mycolor2, line width=2.0pt, mark=x, mark options={solid, mycolor2}]
  table[row sep=crcr]{%
1	1.56608879184862\\
4	1.60895337301587\\
16	1.59689435600579\\
64	1.61558064661383\\
256	1.61940458513319\\
1024	1.62137008872346\\
4096	1.62204706718553\\
8192	1.62137167028067\\
};
\addlegendentry{VDSVMB}

\end{axis}
\end{tikzpicture}%}
	\subfloat[Average time $T (s)$ for linear solver (left) and scaled efficiency (right)\label{fig:linearsystem-scaling}]{% This file was created by matlab2tikz.
%
%The latest updates can be retrieved from
%  http://www.mathworks.com/matlabcentral/fileexchange/22022-matlab2tikz-matlab2tikz
%where you can also make suggestions and rate matlab2tikz.
%
\definecolor{mycolor1}{rgb}{0.00000,0.44700,0.74100}%
\definecolor{mycolor2}{rgb}{0.85000,0.32500,0.09800}%
\definecolor{mycolor3}{rgb}{0.92900,0.69400,0.12500}%
\definecolor{mycolor4}{rgb}{0.49400,0.18400,0.55600}%
\begin{tikzpicture}

\begin{axis}[%
width=0.25\columnwidth,
height=0.25\columnwidth,
at={(0.758in,0.481in)},
scale only axis,
xmode=log,
xmin=1,
xmax=8192,
log basis x=2,
xminorticks=true,
xlabel style={font=\color{white!15!black}},
xlabel={$n_p$},
ymode=log,
ymin=1e-1,
ymax=2,
yminorticks=true,
axis background/.style={fill=white},
title style={font=\bfseries},
legend style={at={(1,0.5)}, legend cell align=left, align=left, draw=none, fill=none}
]
\addplot [color=mycolor1, line width=2.0pt, mark=o, mark options={solid, mycolor1}]
  table[row sep=crcr]{%
1	0.526637444722222\\
4	0.635683591315789\\
16	0.773930532105263\\
64	0.805539252631579\\
256	0.890682915789473\\
1024	0.940617587368421\\
4096	1.01669013684211\\
8192	1.24587410789474\\
};
\addlegendentry{VSMATCH}

\addplot [color=mycolor2, line width=2.0pt, mark=x, mark options={solid, mycolor2}]
  table[row sep=crcr]{%
1	0.307001928378378\\
4	0.407980089736842\\
16	0.463272527368421\\
64	0.497099931891892\\
256	0.585506258378378\\
1024	0.677180935897436\\
4096	0.738128860243902\\
8192	0.73170738875\\
};
\addlegendentry{VDSVMB}

\addplot [color=mycolor3, line width=2.0pt, mark=triangle, mark options={solid, mycolor3}]
  table[row sep=crcr]{%
1	0.1719653415\\
4	0.326831306666667\\
16	0.39110541525\\
64	0.420306074054054\\
256	0.434918104871795\\
1024	0.465797267317073\\
4096	0.630535908297872\\
8192	0.664523475208333\\
};
\addlegendentry{AS}

\end{axis}

\begin{axis}[%
width=0.25\columnwidth,
height=0.25\columnwidth,
at={(3in,0.481in)},
scale only axis,
xmode=log,
log basis x=2,
xmin=1,
xmax=8192,
xminorticks=true,
xlabel style={font=\color{white!15!black}},
xlabel={$n_p$},
ymin=25,
ymax=100,
axis background/.style={fill=white},
legend style={legend cell align=left, align=left, draw=none,fill=none}
]
\addplot [color=mycolor1, line width=2.0pt, mark=o, mark options={solid, mycolor1}]
  table[row sep=crcr]{%
1	100\\
4	82.8458453099513\\
16	68.047120881722\\
64	65.3770059003052\\
256	59.1273769134134\\
1024	55.9884752097399\\
4096	51.7992085925006\\
8192	42.2705184564858\\
};
\addlegendentry{VSMATCH}

\addplot [color=mycolor2, line width=2.0pt, mark=x, mark options={solid, mycolor2}]
  table[row sep=crcr]{%
1	100\\
4	75.2492428187863\\
16	66.2681057567294\\
64	61.7585939330091\\
256	52.4335861462271\\
1024	45.3352881193448\\
4096	41.5919150318733\\
8192	41.9569261016812\\
};
\addlegendentry{VDSVMB}

\addplot [color=mycolor3, line width=2.0pt, mark=triangle, mark options={solid, mycolor3}]
  table[row sep=crcr]{%
1	100\\
4	52.615933049335\\
16	43.9690515126408\\
64	40.9143127153295\\
256	39.5397063432648\\
1024	36.9184951407071\\
4096	27.2728863236702\\
8192	25.8779934668354\\
};
\addlegendentry{AS}

\end{axis}
\end{tikzpicture}%}
	\caption{Weak scaling. Average operator complexity of AMG preconditioners and average time $T (s)$ for linear solver.}
	
\end{figure}
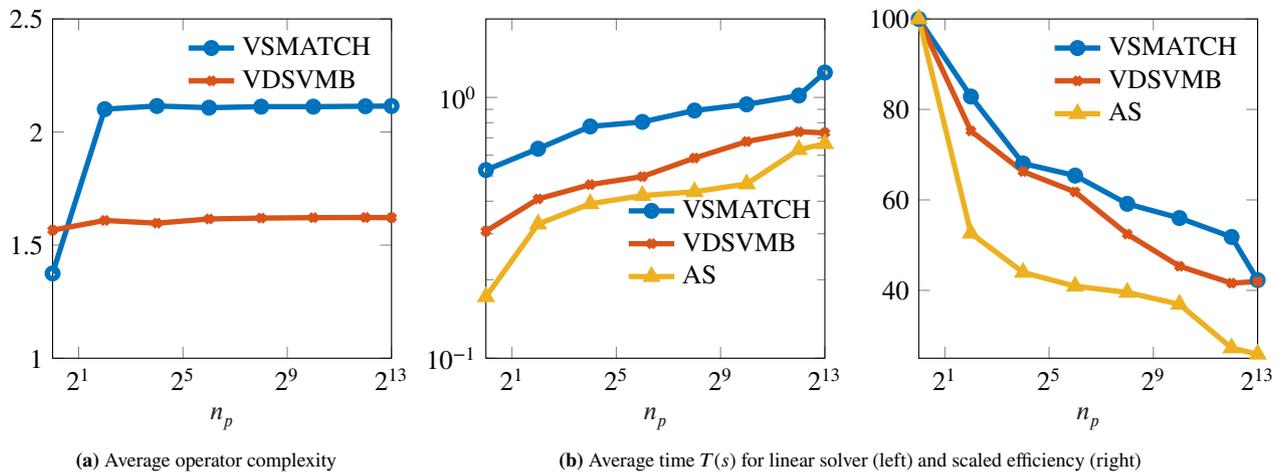
Looking also at the overall behavior of the solving times is useful. If we compare the results in Figure~\ref{fig:linearsystem-scaling} with the ones in Figure~\ref{fig:total-scaledeff}, we observe that the efficiency for the linear system solve phase reflects the global efficiency and is not hampered by the fact that we are building Jacobians and auxiliary matrices, see again Figure~\ref{fig:matrix-build-scaling}, or by the way in which we are performing the function evaluations.

\end{document}